\def\bfB{\mathbf{B}}
\def\bfC{\mathbf{C}}
\newcommand{\Hom}{\operatorname{Hom}}
\newcommand{\Mat}{\operatorname{M}}
\newcommand{\Mats}{\operatorname{S}}
\newcommand{\Mata}{\operatorname{A}}
\newcommand{\id}{\operatorname{id}}
\newcommand{\GL}{\operatorname{GL}}
\newcommand{\Ker}{\operatorname{Ker}}
\newcommand{\Diag}{\operatorname{Diag}}
\newcommand{\Vect}{\operatorname{span}}
\newcommand{\im}{\operatorname{Im}}
\newcommand{\tr}{\operatorname{tr}}
\newcommand{\rk}{\operatorname{rk}}
\newcommand{\codim}{\operatorname{codim}}
\renewcommand{\setminus}{\smallsetminus}
\newcommand{\modu}{\operatorname{mod}}
\def\F{\mathbb{F}}
\def\K{\mathbb{K}}
\def\calA{\mathcal{A}}
\def\calB{\mathcal{B}}
\def\calD{\mathcal{D}}
\def\calH{\mathcal{H}}
\def\calK{\mathcal{K}}
\def\calL{\mathcal{L}}
\def\calM{\mathcal{M}}
\def\calR{\mathcal{R}}
\def\calS{\mathcal{S}}
\def\calT{\mathcal{T}}
\def\calU{\mathcal{U}}
\def\calV{\mathcal{V}}
\def\lcro{\mathopen{[\![}}
\def\rcro{\mathclose{]\!]}}
\theoremstyle{definition}
\newtheorem{Def}{Definition}[section]
\newtheorem{Not}[Def]{Notation}
\theoremstyle{plain}
\newtheorem{theo}{Theorem}[section]
\newtheorem{prop}[theo]{Proposition}
\newtheorem{cor}[theo]{Corollary}
\newtheorem{lemma}[theo]{Lemma}
\newtheorem{claim}{Claim}
\theoremstyle{plain}
\theoremstyle{remark}
\newtheorem{Rems}{Remarks}
\newtheorem{Rem}[Rems]{Remark}
\title{Range-compatible homomorphisms on matrix spaces}
\author{Cl\'ement de Seguins Pazzis\footnote{Universit\'e de Versailles Saint-Quentin-en-Yvelines, Laboratoire de Math\'ematiques
de Versailles, 45 avenue des Etats-Unis, 78035 Versailles cedex, France}
\footnote{e-mail address: dsp.prof@gmail.com}}
\begin{document}

\thispagestyle{plain}

\maketitle

\begin{abstract}
Let $\K$ be a (commutative) field, and $U$ and $V$ be finite-dimensional vector spaces over $\K$.
Let $\calS$ be a linear subspace of the space $\calL(U,V)$ of all linear operators from $U$ to $V$. A map $F : \calS \rightarrow V$
is called range-compatible when $F(s) \in \im s$ for all $s \in \calS$.
Obvious examples of such maps are the evaluation maps $s \mapsto s(x)$, with $x \in U$.

In this article, we classify all the range-compatible group homomorphisms on $\calS$ provided that
$\codim_{\calL(U,V)} \calS \leq 2\dim V-3$, unless
$\K$ has cardinality $2$ and $\codim_{\calL(U,V)} \calS = 2\dim V-3$.
Under those assumptions, it is shown that the linear range-compatible maps are the evaluation maps,
and the above upper-bound on the codimension of $\calS$ is optimal for this result to hold.

As an application, we obtain new sufficient conditions for the algebraic reflexivity of an operator space
and, with the above conditions on the codimension of $\calS$, we give an explicit description of the
range-restricting and range-preserving homomorphisms on $\calS$.
\end{abstract}

\vskip 2mm
\noindent
\emph{AMS Classification:} 15A04, 15A30, 15A86

\vskip 2mm
\noindent
\emph{Keywords:} range-compatibility, evaluation map, algebraic reflexivity, range preserver

\tableofcontents

\section{Introduction}

\subsection{Main definitions and goals}\label{intro}

Throughout the article, $\K$ denotes an arbitrary (commutative) field, and $U$ and $V$ denote vector spaces over $\K$.
We denote by $\Mat_{n,p}(\K)$ the set of matrices with $n$ rows, $p$ columns and entries in $\K$.
The entries of matrices will always be denoted by small letters, e.g.\ the entry of the matrix $M$ at the $(i,j)$-spot
is denoted by $m_{i,j}$.

We denote by $\calL(U,V)$ the space of all linear operators from $U$ to $V$,
and by $\Hom(U,V)$ the space of all homomorphisms from $(U,+)$ to $(V,+)$.
Given a linear subspace $\calS$ of $\calL(U,V)$, the codimension of $\calS$ in $\calL(U,V)$ is denoted by
$\codim \calS$.

\begin{Def}
Let $U$ and $V$ be vector spaces, and $\calS$ be a linear subspace (or, more generally, a subgroup) of $\calL(U,V)$.
A \textbf{range-compatible map on $\calS$} is a map $F : \calS \rightarrow V$
that satisfies
$$\forall s \in \calS, \quad F(s) \in \im s.$$
\end{Def}

A similar definition is derived for maps from a linear subspace (or, more generally, a subgroup)
of $\Mat_{n,p}(\K)$ to $\K^n$ by using the canonical identification between $\Mat_{n,p}(\K)$ and $\calL(\K^p,\K^n)$.

\begin{Def}
Let $\calS$ be a linear subspace of $\calL(U,V)$.
A map $F : \calS \rightarrow V$ is called \textbf{local} when it is an evaluation map, i.e.\ when there is a vector
$x \in U$ such that
$$\forall s \in \calS, \quad F(s)=s(x).$$
One sees in that case that $F$ is linear and range-compatible.
\end{Def}

Note that the set of range-compatible homomorphisms on $\calS$ is a linear subspace of $\Hom(\calS,V)$,
and the one of local maps on $\calS$ is a linear subspace of it.

Again, we adopt a similar definition for maps between matrix spaces, so that, when $\calV$ is a linear subspace
(or, more generally, a subgroup) of
$\Mat_{n,p}(\K)$, a map $F : \calV \rightarrow \K^n$ is local if and only if there exists $X \in \K^p$
such that $F(M)=MX$ for all $M \in \calV$.

It has been noted by several authors that every range-compatible linear map on $\Mat_{n,p}(\K)$ is local
(e.g.\ this is implicit in \cite{Dieudonne}). In \cite{dSPclass}, this nice result is generalized as follows:

\begin{theo}[Lemma 8 of \cite{dSPclass}]\label{nonoptimaltheo}
Let $\calS$ be a linear subspace of $\Mat_{n,p}(\K)$ with $\codim \calS \leq n-2$.
Then, every range-compatible linear map on $\calS$ is local.
\end{theo}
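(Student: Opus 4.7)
The strategy I propose is to mimic the classical argument for the full matrix space, with the codimension bound used to supply enough rank-one matrices inside $\calS$. For each nonzero $v \in \K^p$, set
\[
\calS_v := \{u \in \K^n : uv^{T} \in \calS\},
\]
a linear subspace of $\K^n$. The map $u \mapsto uv^{T}$ is a linear embedding of $\K^n$ into $\Mat_{n,p}(\K)$, so $\codim_{\K^n} \calS_v \leq \codim \calS \leq n-2$, giving $\dim \calS_v \geq 2$. Range-compatibility of $F$ together with $\im(uv^{T}) = \K u$ forces $F(uv^{T}) = \lambda_v(u)\, u$ for some scalar $\lambda_v(u)$. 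Choosing two linearly independent $u, u' \in \calS_v$ and applying linearity of $F$ to $(u+u')v^{T} = uv^{T} + u'v^{T}$ yields $\lambda_v(u) = \lambda_v(u') = \lambda_v(u+u')$, so $\lambda_v$ is a constant $\mu(v) \in \K$, and
\[
F(uv^{T}) = \mu(v)\, u \qquad \text{for every } uv^{T} \in \calS \text{ with } u \neq 0.
\]

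The remaining task is to construct $X \in \K^p$ with $\mu(v) = v^{T} X$. Once $X$ is in hand, $F$ and the evaluation $M \mapsto MX$ coincide on every rank-one matrix of $\calS$, and a dimension count on the family $(\calS_v)_v$ shows that those rank-one matrices span $\calS$, whence $F(M) = MX$ on $\calS$ by linearity. The homogeneity $\mu(\alpha v) = \alpha \mu(v)$ follows at once from the linearity of $F$ on $u(\alpha v)^{T} = \alpha uv^{T}$. The additivity $\mu(v_1 + v_2) = \mu(v_1) + \mu(v_2)$ follows whenever $\calS_{v_1} \cap \calS_{v_2}$ contains a nonzero vector $u$: such a $u$ automatically lies in $\calS_{v_1+v_2}$ (because $uv_1^{T}, uv_2^{T} \in \calS$ implies $u(v_1+v_2)^{T} \in \calS$), and applying $F$ to $u(v_1+v_2)^{T} = uv_1^{T} + uv_2^{T}$ yields the identity.

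The main obstacle is handling pairs $(v_1, v_2)$ for which $\calS_{v_1} \cap \calS_{v_2}$ might be trivial. A crude codimension bound only gives $\dim(\calS_{v_1} \cap \calS_{v_2}) \geq n - 2\,\codim \calS \geq 4 - n$, which is automatically positive for $n \leq 3$ but not for $n \geq 4$. In the hard range I would bridge the additivity by a chaining argument: find an auxiliary $v_3$ such that the intersections $\calS_{v_3} \cap \calS_{v_1}$, $\calS_{v_3} \cap \calS_{v_1+v_2}$ and $\calS_{v_3+v_1} \cap \calS_{v_2}$ are all nontrivial, and then compute $\mu(v_1+v_2)$ via three applications of the easy additivity so that $\mu(v_3)$ cancels. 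The existence of such a $v_3$ should follow by combining several codimension bounds on subspaces of codimension at most $n-2$ in $\K^n$, and I expect the hypothesis $\codim \calS \leq n-2$ to be used exactly here, so this chaining step will be the technical heart of the proof.
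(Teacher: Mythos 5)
Your reduction to rank-one matrices rests on two claims, and both fail. First, the spanning claim is false: the rank-one elements of a subspace of codimension $n-2$ need not span it. Take $n=4$, $p=2$ and $\calS=\{M \in \Mat_{4,2}(\K) : m_{1,1}+m_{2,2}=0 \text{ and } m_{1,2}=0\}$, of codimension $2=n-2$. A rank-one matrix $uv^T$ lies in $\calS$ exactly when $u_1v_1+u_2v_2=0$ and $u_1v_2=0$, which forces $u_1=0$ in every case; hence every rank-one element of $\calS$ has zero first row, while $E_{1,1}-E_{2,2}\in\calS$ does not. (In the language of the paper's introduction, your spanning claim amounts to the algebraic reflexivity of the operator space $(\calS^\bot)^T$, which has dimension at most $n-2$; small dimension alone does not give reflexivity, and that is essentially the content of the theorem you are trying to prove, so the argument is close to circular.) Second, the chaining mechanism for the additivity of $\mu$ also fails in concrete cases. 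Take $n=4$, $p=2$ and $\calS=\{M : m_{3,1}+m_{1,2}=0 \text{ and } m_{4,1}+m_{2,2}=0\}$. Then $\calS_{e_1}=\Vect(e_1,e_2)$, $\calS_{e_2}=\Vect(e_3,e_4)$ and $\calS_{e_1+\lambda e_2}=\Vect(e_1-\lambda e_3,\,e_2-\lambda e_4)$; these planes are pairwise transverse in $\K^4$, so $\calS_{v}\cap\calS_{v'}=\{0\}$ for all non-collinear $v,v'$. Since $p=2$, every candidate $v_3$ produces intersections drawn from this same pairwise-transverse family, so no auxiliary vector makes your three intersections nontrivial. (In that example additivity of $\mu$ is still forced when $\#\K>2$, but only through genuine linear relations among rank-one matrices supported on three or more directions, and over $\F_2$ one must invoke rank-two matrices; neither is supplied by your argument.) So the step you flag as the ``technical heart'' is not merely unfinished: the mechanism you propose for it does not exist in general.

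For comparison, the paper does not argue through rank-one matrices at all. It deduces this statement from the First Classification Theorem (same codimension bound, for group homomorphisms), proved by induction on $\dim V$ via the projection technique: for a nonzero $y\in V$ one passes to $\calS \modu y \subset \calL(U,V/\K y)$, whose codimension either drops below $\dim V-3$ (so the induction applies and $F\modu y$ is local) or forces $\calS$ to contain all maps into $\K y$; comparing two such projections either identifies $F$ as an evaluation outright or pins down $\calS$ completely as a split space $\calD \coprod \Mat_{n,p-1}(\K)$, which is handled by the Splitting Lemma and the one-dimensional-source case. If you want to salvage a rank-one-based approach, you would at minimum need a separate argument on matrices of higher rank to control $F$ off the span of the rank-one elements and to force the additivity of $\mu$; at that point you have lost the advantage of the reduction.
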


Theorem \ref{nonoptimaltheo} was a major key in the generalization to arbitrary fields of an important theorem of Atkinson and Lloyd on the structure of large spaces of matrices with bounded rank (see \cite{AtkLloyd} for the
seminal result, and \cite{dSPclass} for its optimal generalization).
In \cite{dSPlargelinpres}, Theorem \ref{nonoptimaltheo} was also used to generalize Dieudonn\'e's theorem
on invertibility preservers \cite{Dieudonne} to a whole class of large subspaces of square matrices.

One of our current research problems is an optimal version of the main theorems of \cite{dSPlargelinpres}
for full-rank preservers on large spaces of non-square matrices. In investigating this problem, we have discovered
that the optimal upper bound is not $n-2$. Here it is:

\begin{Not}
If $\# \K>2$, we set $d_n(\K):=2n-3$. If $\# \K=2$, we set $d_n(\K):=2n-4$.
\end{Not}

\begin{theo}\label{maintheolin}
Let $\calS$ be a linear subspace of $\Mat_{n,p}(\K)$ with $\codim \calS \leq d_n(\K)$ and $p \geq 2$.
Then, every range-compatible linear map on $\calS$ is local.
\end{theo}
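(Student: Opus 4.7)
The plan is to prove the theorem by induction on $p \geq 2$, with the base case $p = 2$ carrying the main content.

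\smallskip

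\noindent\emph{Inductive step (assume $p \geq 3$).} Let $F : \calS \to \K^n$ be range-compatible and linear, and set $\calS_0 := \{M \in \calS : M e_1 = 0\}$. Viewing $\calS_0$ as a subspace of $\Mat_{n,p-1}(\K)$ by deleting the (zero) first column, a routine dimension count yields
\[
\codim_{\Mat_{n,p-1}(\K)} \calS_0 \;\leq\; \codim \calS \;\leq\; d_n(\K).
\]
Applying the inductive hypothesis to $F|_{\calS_0}$ produces scalars $x_2, \ldots, x_p \in \K$ with $F(M) = \sum_{j \geq 2} x_j M e_j$ for every $M \in \calS_0$. Subtracting this local map from $F$ (it is itself range-compatible), one reduces to the situation $F|_{\calS_0} = 0$. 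Then $F$ factors through the first-column projection as $F = H \circ \pi_1$, where $\pi_1 : M \mapsto M e_1$ and $H : \pi_1(\calS) \to \K^n$ is linear. The task becomes showing that $H(C) \in \Vect(C)$ for every $C \in \pi_1(\calS) \setminus \{0\}$. Fixing such a $C$ together with a preimage $M_0 \in \calS$, range-compatibility forces $H(C) \in \im(M_0 + N)$ for every $N \in \calS_0$; reducing modulo $\Vect(C)$, the goal becomes to prove $\bigcap_{N \in \calS_0} \overline{\im(M_0 + N)} = 0$ in $\K^n / \Vect(C)$. Here the codimension bound on $\calS_0$ provides enough freedom to vary the last $p - 1$ columns of $M_0 + N$ modulo $\Vect(C)$ and exclude any nonzero residue from lying in every such span.

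\smallskip

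\noindent\emph{Base case $p = 2$.} The reduction above degenerates because $\calS_0 \subseteq \K^n$ inherits only a vacuous codimension bound. A direct approach is needed. Writing $M = (A \mid B)$, I would apply to each one-column piece $\{A : (A, 0) \in \calS\}$ and $\{B : (0, B) \in \calS\}$ the standard observation that a linear range-compatible map $v \mapsto \phi(v)\, v$ on a subspace of dimension at least two must have $\phi$ equal to a fixed scalar; this produces candidates $x_1, x_2 \in \K$. One then checks consistency on the mixed pairs $(A, B) \in \calS$ with $A, B$ linearly independent, of which the codimension bound on $\calS$ guarantees an ample supply.

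\smallskip

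\noindent\emph{Main obstacle.} The sharp dependence of $d_n(\K)$ on $\#\K$ is the chief subtlety, and it enters precisely through the consistency step of the base case: a typical linearity argument requires selecting an auxiliary scalar $\lambda \in \K$ avoiding two specified values, which needs $\#\K \geq 3$. Over $\F_2$ this is unavailable, forcing a more delicate substitute and the weaker bound $2n - 4$. The inductive step, while not purely formal, is largely driven by the column-space flexibility of $\calS_0$ provided by the codimension hypothesis; the real difficulty lies in the base case, and in tracking the field-cardinality subtlety through the sharp constant.
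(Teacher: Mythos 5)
Your overall strategy --- induction on the number of columns $p$, peeling off the first column --- is the method of the earlier paper \cite{dSPclass}, which only reaches codimension $n-2$; the present paper explicitly abandons it and instead inducts on $n$ via the projection technique (projecting $F$ modulo well-chosen vectors $y\in\K^n$, the ``adapted vectors''), the Three Vectors Lemma, and a separate classification of range-compatible maps on symmetric matrix spaces. The reason your route cannot be pushed to $2n-3$ shows up concretely in your inductive step: the claim that $\bigcap_{N \in \calS_0} \overline{\im(M_0+N)} = 0$ in $\K^n/\K C$ is false in general. If every matrix of $\calS$ with first column $C$ has full rank (which happens already for $\calS=\{M \in \Mat_{2,3}(\K): m_{1,1}=m_{2,2}\}$ with $C=e_1$, and is typical when the codimension is as large as $2n-3$), then each $\im(M_0+N)$ is all of $\K^n$ and range-compatibility at those matrices yields no constraint whatsoever on $H(C)$. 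To pin down $H(C)$ one must combine constraints coming from rank-deficient matrices with \emph{different} first columns, using the linearity of $H$ across them --- a genuinely different and harder argument than intersecting images over the single fibre above $C$.

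The base case $p=2$ is also not viable as sketched. For $p=2$ the hypothesis only gives $\dim\calS\geq 3$, and the slices $\{A : (A,0)\in\calS\}$ and $\{B:(0,B)\in\calS\}$ have dimensions $\dim\calS-\dim(\calS e_2)$ and $\dim\calS-\dim(\calS e_1)$, which are typically $0$ once $n\geq 3$; so the ``fixed scalar on a subspace of dimension at least two'' observation has nothing to act on. This is precisely where the hard content of the theorem lives: the paper needs the classification of the exceptional spaces at $n=3$ (the spaces $\calK_1,\dots,\calK_4$ in the Adapted Vectors Lemma) and, crucially, the analysis of $\Mats_2(\K)$ and root-linear forms, which is also the true source of the dichotomy between $\#\K>2$ and $\K=\F_2$: over $\F_2$ the root-linear diagonal-extraction map on $\Mats_2(\F_2)\vee\Mat_{n-2,p-2}(\F_2)$ is linear and non-local at codimension $2n-3$, which is why the bound drops to $2n-4$. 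Your attribution of the cardinality issue to ``choosing a scalar avoiding two values'' points at the right computation (the identity $(\lambda^2-\lambda)\alpha(x)=0$) but misses the structural phenomenon behind it.
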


Let us prove right away that the upper-bound $2n-3$ is optimal for $\# \K>2$
(for fields with $2$ elements, we postpone the discussion until Section \ref{introsecondtheorem}).
Set
$$\calU:=\biggl\{\begin{bmatrix}
a & b \\
0 & a
\end{bmatrix} \mid (a,b)\in \K^2\biggr\} \subset \Mat_2(\K)$$
and
$$F : \begin{bmatrix}
a & b \\
0 & a
\end{bmatrix} \in \calU \longmapsto
\begin{bmatrix}
b  \\
0
\end{bmatrix}.$$
Then, $F$ is a range-compatible linear map as, for $M \in \calU$, either $m_{1,1} \neq 0$
and hence $M$ is invertible, or $m_{1,1}=0$ and $F(M)$ is the second column of $M$.
However, if there existed some $X\in \K^2$ such that $F(M)=MX$ for all $M \in \calU$, then in particular $X=F(I_2)=0$, contradicting the obvious fact that $F$ is non-zero.

More generally, given $n \geq 2$ and $p \geq 2$, one can consider the subspace $\calS \subset \Mat_{n,p}(\K)$
of all matrices of the form
$\begin{bmatrix}
A & B \\
[0]_{(n-2) \times 2} & C
\end{bmatrix}$, with $A \in \calU$, $B \in \Mat_{2,p-2}(\K)$ and $C \in \Mat_{n-2,p-2}(\K)$.
One sees that $\codim \calS=2n-2$ and that the linear map
$$M \in \calS \longmapsto \begin{bmatrix}
m_{1,2} \\
[0]_{(n-1) \times 1}
\end{bmatrix}$$
is range-compatible but not local. Thus, the upper bound $2n-3$ in Theorem \ref{maintheolin} is
optimal for all fields with more than $2$ elements and all integers $n \geq 2$ and $p \geq 2$.
For $n=1$ or $p=1$, it is easy to see that no upper bound on the codimension is necessary (this is obvious for $n=1$,
while for $p=1$ this follows from Proposition \ref{dimU=1} of Section \ref{dim1section}).

In this article, we shall not limit ourselves to the study of linear range-compatible maps,
rather we will enlarge the discussion to encompass all range-compatible \emph{group homomorphisms}.
The motivation for doing so is twofold:
firstly, range-compatible maps are of no interest if we do not add an additional algebraic property,
and additivity seems to be the minimal algebraic requirement if we want to find any meaningful result;
secondly, because the fundamental theorem of projective geometry plays a large part in the study of
full-rank preserving maps \cite{Dieudonne,dSPlargelinpres}, we actually need a classification of
all range-compatible \emph{semi-linear} maps on large operator spaces. In this prospect however, we stumble across
the difficulty that the set of all semi-linear maps between two vector spaces is not closed under addition
in general (as there may be different field automorphisms attached to those maps).
The enlargement of our framework to group homomorphisms is a natural way of avoiding that difficulty.

For range-compatible homomorphisms, Theorem \ref{nonoptimaltheo}
has a simple extension:

\begin{theo}[First Classification Theorem]\label{classtheo1}
Let $U$ and $V$ be finite-dimensional vector spaces over $\K$.
Let $\calS$ be a linear subspace of $\calL(U,V)$ with $\codim \calS \leq \dim V-2$.
Then, every range-compatible homomorphism on $\calS$ is local.
\end{theo}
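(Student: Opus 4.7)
My plan is to reduce Theorem \ref{classtheo1} to its linear version, Theorem \ref{nonoptimaltheo}, by first showing that any range-compatible group homomorphism $F : \calS \to V$ is automatically $\K$-linear. Once this is established, the identification $\calL(U,V) \simeq \Mat_{n,p}(\K)$ with $n = \dim V$ and $p = \dim U$ turns the codimension hypothesis into $\codim \calS \leq n-2$, and Theorem \ref{nonoptimaltheo} directly yields that $F$ is local.

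Since $F$ is additive, it is already $\Z$-linear, so the real task is to check that $F$ commutes with scalar multiplication by every $\lambda \in \K$. For this I would introduce, for each $\lambda \in \K$, the auxiliary map
$$G_\lambda : \calS \to V, \qquad s \longmapsto F(\lambda s) - \lambda F(s).$$
It is a group homomorphism (by additivity of $F$ and of $s \mapsto \lambda s$) and it is range-compatible, since $\im(\lambda s) = \im s$ forces $F(\lambda s) \in \im s$ and obviously $\lambda F(s) \in \im s$. Thus $\K$-linearity of $F$ is equivalent to $G_\lambda \equiv 0$ for every $\lambda \in \K \setminus \{0\}$, and this is the key reduction.

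For the vanishing of $G_\lambda$ I would examine $F$ on the rank-one operators of $\calS$: for $s = v \otimes \phi$ with $v \in V \setminus \{0\}$, range-compatibility forces $F(v \otimes \phi) = \chi(v,\phi)\,v$ for a unique scalar $\chi(v,\phi) \in \K$. Additivity of $F$ in $\phi$ (for fixed $v$) together with the constraint arising from range-compatibility on rank-two sums $v_1 \otimes \phi_1 + v_2 \otimes \phi_2$ whose linear forms are proportional (in which case $\im(s_1+s_2)$ collapses to a single line) pin $\chi$ down as bilinear. Consequently $G_\lambda$ vanishes on every rank-one element of $\calS$, and an additive-decomposition argument propagates the vanishing to the whole of $\calS$.

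The hard part is the final propagation step: guaranteeing that $\calS$ contains enough rank-one operators to pin $F$ down. For fixed nonzero $v \in V$, the subspace $L_v := \{s \in \calL(U,V) : \im s \subseteq \K v\}$ has dimension $\dim U$, and the general codimension inequality yields $\dim(\calS \cap L_v) \geq \dim U - (\dim V - 2)$, which can be non-positive when $\dim U$ is small relative to $\dim V$. In that regime one must instead carry out the analogous analysis on rank-two (or higher) operators and extract $\K$-linearity from range-compatibility constraints there; the bound $\codim \calS \leq \dim V - 2$ is precisely what ensures that either case gives enough low-rank operators in $\calS$ to force $\K$-linearity of $F$, after which Theorem \ref{nonoptimaltheo} closes the argument.
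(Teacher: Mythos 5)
Your overall plan --- prove that $F$ is automatically $\K$-linear and then invoke Theorem \ref{nonoptimaltheo} --- is legitimate in principle, but essentially all of the difficulty of the theorem is concentrated in that linearity step (indeed, the conclusion ``$F$ is local'' already implies linearity), and your argument for it has a genuine gap. The rank-one analysis is sound as far as it goes: for $s=v\otimes\phi\in\calS$ of rank one, range-compatibility does force $F(s)=\chi(v,\phi)\,v$, and additivity together with comparisons on sums $v_1\otimes\phi+v_2\otimes\phi$ with $v_1,v_2$ independent can force $\chi$ to be linear in $\phi$. But this only controls $F$ on the rank-one elements of $\calS$, and the ``additive-decomposition argument'' that is supposed to propagate $G_\lambda=0$ to all of $\calS$ requires that every element of $\calS$ be a sum of rank-one elements \emph{of $\calS$}. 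That is false under the hypothesis $\codim\calS\leq\dim V-2$. For instance, take $\K=\R$, $n=4$, $p=2$ and
$$\calS:=\Biggl\{\begin{bmatrix} a & -b \\ b & a \\ c & d \\ e & f \end{bmatrix} \mid (a,b,c,d,e,f)\in\R^6\Biggr\}\subset\Mat_{4,2}(\R),$$
which has codimension $2=\dim V-2$. Any rank-one element has its upper $2\times 2$ block singular, hence zero, so all rank-one elements of $\calS$ lie in the $4$-dimensional subspace $\{a=b=0\}$ of the $6$-dimensional space $\calS$. Your method therefore leaves $G_\lambda$ completely undetermined on a complement of that subspace. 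The fallback you offer --- ``carry out the analogous analysis on rank-two (or higher) operators'' --- is not an argument: for $\rk s\geq 2$ range-compatibility only confines $F(s)$ to the $(\geq 2)$-dimensional space $\im s$, there is no scalar invariant $\chi$ to pin down, and nothing in the sketch explains how the codimension bound would then force linearity. The concluding sentence (``the bound is precisely what ensures\dots'') is asserting exactly the statement that needs to be proved.

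For comparison, the paper does not attempt to reduce to the linear case at all. It proves the statement directly for homomorphisms by induction on $\dim V$ using the projection technique: for $z\in V\setminus\{0\}$ the codimension formula
$$\codim(\calS\modu z)=\codim\calS+\dim\{s\in\calS:\im s\subset\K z\}-\dim U$$
shows that either $\codim(\calS\modu z)\leq\dim V-3$ (so the induction hypothesis applies to $F\modu z$) or $\calS$ contains all operators with range in $\K z$. If no two independent vectors of the first kind exist, then $\calS=\calL(U,V)$ and Proposition \ref{espacetotal} concludes; otherwise two local projections $F\modu y_1$, $F\modu y_2$ either agree (forcing $F$ local) or pin $\calS$ down as $\calD\coprod\Mat_{n,p-1}(\K)$, which is handled by the Splitting Lemma and the $\dim U=1$ case. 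If you want to salvage your reduction-to-linear strategy, you would essentially have to redo an induction of this kind anyway, so you should work with homomorphisms from the start.
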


The reason to coin Theorem \ref{classtheo1} as the First Classification Theorem lies
in the fact that, although the upper bound $\dim V-2$ is not optimal for linear maps,
it is optimal for group homomorphisms. To see this, consider a group endomorphism $\varphi$ of $(\K,+)$,
denote by $\calS$ the space of all $n \times p$ matrices of the form
$$\begin{bmatrix}
? & [?]_{1 \times (p-1)} \\
[0]_{(n-1) \times 1} & [?]_{(n-1) \times (p-1)}
\end{bmatrix},$$
and consider the map
$$F : M \in \calS \longmapsto \begin{bmatrix}
\varphi(m_{1,1}) \\
[0]_{(n-1) \times 1}
\end{bmatrix}.$$
We see that $F$ is a range-compatible homomorphism on $\calS$
(if $m_{1,1} \neq 0$, then $F(M)$ is a scalar multiple of the first column of $M$, otherwise
$F(M)=0$). However, if $\varphi$ is non-linear, then $F$ is also non-linear whence $F$ cannot be local.
In particular, whenever $\K$ is non-prime, it has non-linear group endomorphisms\footnote{
E.g., one denotes by $\K_0$ the prime subfield of $\K$ and one takes a non-zero linear form on the $\K_0$-vector space $\K$.},
yielding a linear subspace of $\Mat_{n,p}(\K)$ with codimension $n-1$ on which
at least one range-compatible homomorphism is non-local. If $\K$ is prime, then
every group homomorphism between vector spaces over $\K$ is linear, and in this case the extension of Theorem \ref{nonoptimaltheo} to group homomorphisms is trivial.

Only slight modifications of the proof of Lemma 8 of \cite{dSPclass} are needed to prove the First Classification
Theorem. Rather than explain those modifications, we will give a more efficient proof of Theorem \ref{classtheo1} that
uses a completely different strategy than the one of Lemma 8 of \cite{dSPclass}
and highlights the new techniques that are developed in this article.
Those techniques will actually help us obtain a far more comprehensive and difficult result
that describes all the range-compatible homomorphisms provided that $\codim \calS \leq 2\dim V-3$
and the underlying field has more than $2$ elements. For such fields, Theorems \ref{maintheolin} and \ref{classtheo1} will appear
as obvious corollaries. As this ultimate theorem involves quite a few ``wild" cases, we postpone
its statement until Section \ref{introsecondtheorem}.

Recently, an additional motivation for studying range-compatible linear maps came from the discovery
of their profound relationship with the concept of algebraic reflexivity for operator spaces.
The reflexive closure of a linear subspace $\calS$ of $\calL(U,V)$
is defined as the space $\calR(\calS)$ of all linear maps $f$ for which $\forall x \in U, \; f(x) \in \calS x$
(note that $\calS \subset \calR(\calS)$ and that $\calR(\calR(\calS))=\calR(\calS)$).
The space $\calS$ is called \textbf{(algebraically) reflexive} when $\calR(\calS)=\calS$.
For $x \in U$, consider the linear operator $\hat{x} : f \in \calS \mapsto f(x) \in V$, so that
$$\widehat{\calS}:=\bigl\{\hat{x} \mid x \in U\bigr\}$$
is a linear subspace of $\calL(\calS,V)$.
Let $g \in \calR(\calS)$. For $x \in U$, we see that $\hat{x}=0 \Rightarrow g(x)=0$.
Thus, we obtain a linear map $G : \widehat{\calS} \rightarrow V$ such that $\forall x \in U, \; g(x)=G(\widehat{x})$.
The assumption $g \in \calR(\calS)$ means that $g(x) \in \im \widehat{x}$ for all $x \in U$, which yields that
$G$ is range-compatible. Note that $G$ is local if and only if there exists $f \in \calS$
such that $\forall x \in U, \; G(\widehat{x})=\widehat{x}(f)$, that is $f=g$, whence
$G$ is local if and only if $g \in \calS$.

Conversely, let $H : \widehat{\calS} \rightarrow V$ be a linear range-compatible map.
Then, $h : x \in U \mapsto H(\widehat{x})$ is linear and belongs to $\calR(\calS)$.
Again, $H$ is local if and only if $h$ belongs to $\calS$. Thus, we have proved the following link between the
localness of range-compatible linear maps and algebraic reflexivity:

\begin{prop}
Let $\calS$ be a linear subspace of $\calL(U,V)$. Then, $\calS$ is algebraically reflexive if and only if
every range-compatible linear map on $\widehat{\calS}$ is local.
\end{prop}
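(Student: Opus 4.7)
The plan is to take the two constructions sketched in the paragraphs just before the statement and promote each of them into a well-defined bijective correspondence between the range-compatible linear maps on $\widehat{\calS}$ on the one hand and the elements of $\calR(\calS)$ on the other hand, so that ``local'' on the first side corresponds exactly to ``belongs to $\calS$'' on the second side. The key observation is that the map $x \in U \mapsto \widehat{x} \in \widehat{\calS}$ is linear and surjective by definition, so any linear map on $U$ that factors through it descends to a well-defined linear map on $\widehat{\calS}$, and conversely.

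For the direction ``$\calS$ reflexive $\Rightarrow$ every range-compatible linear map on $\widehat{\calS}$ is local'', I would start from a range-compatible linear $H : \widehat{\calS} \to V$ and consider $h : x \in U \mapsto H(\widehat{x})$, which is linear by composition. Range-compatibility of $H$ at $\widehat{x}$ reads $H(\widehat{x}) \in \im \widehat{x} = \{f(x) \mid f \in \calS\} = \calS x$, so $h(x) \in \calS x$ for every $x \in U$, i.e.\ $h \in \calR(\calS)$. By the reflexivity hypothesis, $h \in \calS$, so there exists $f \in \calS$ with $H(\widehat{x}) = h(x) = f(x) = \widehat{x}(f)$ for every $x$; since $\widehat{\calS}$ is spanned by the $\widehat{x}$'s, this means $H$ is the evaluation at $f$, hence local.

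For the converse direction, I would start from $g \in \calR(\calS)$ and try to define $G : \widehat{\calS} \to V$ by $G(\widehat{x}):= g(x)$. The one nontrivial point, and the place that deserves the most care, is well-definedness: if $\widehat{x_1} = \widehat{x_2}$ then $\widehat{x_1-x_2} = 0$, i.e.\ $\calS(x_1-x_2) = \{0\}$; since $g \in \calR(\calS)$ forces $g(x_1-x_2) \in \calS(x_1-x_2)$, we conclude $g(x_1) = g(x_2)$. Linearity and range-compatibility of $G$ are then automatic from those of $g$ together with $g(x) \in \calS x = \im \widehat{x}$. Applying the hypothesis to $G$ produces $f \in \calS$ with $g(x) = G(\widehat{x}) = \widehat{x}(f) = f(x)$ for all $x$, so $g = f \in \calS$, proving $\calR(\calS) \subseteq \calS$ and hence reflexivity.

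I do not expect a genuine obstacle here; the whole argument is essentially a bookkeeping exercise to notice that ``local'' on $\widehat{\calS}$ and ``lying in $\calS$'' on $U$ are two names for the same condition. The only step that is not purely formal is the well-definedness check in the converse direction, but this falls out immediately from the defining property of $\calR(\calS)$.
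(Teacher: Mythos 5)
Your argument is correct and is essentially the paper's own: both directions are exactly the two constructions $g \mapsto G$ and $H \mapsto h$ given just before the statement, with the well-definedness of $G$ handled the same way (via $\widehat{x}=0 \Rightarrow g(x)=0$) and the identification of ``local'' with ``lies in $\calS$'' carried out identically. Nothing to add.
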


Thus, finding sufficient conditions for all the range-compatible linear maps on $\widehat{\calS}$ to be local
yields sufficient conditions for $\calS$ to be algebraically reflexive.

\begin{Rem}
Implicit in the above construction is an isomorphism between $\calR(\calS)$ and the vector space
of all range-compatible linear maps from $\widehat{\calS}$ to $V$.
\end{Rem}

Now, we shall say that an operator space $\calS \subset \calL(U,V)$
is \textbf{reduced} when no vector of $U$ belongs to all the kernels of the operators in $\calS$
and when $V$ is the sum of all the ranges of the operators in $\calS$. It is clear that
algebraic reflexivity needs only be studied for reduced spaces.
Given such a reduced space $\calS$, we see that $\widehat{\calS}$ is a linear subspace of $\calL(\calS,V)$
with dimension $\dim U$. Therefore, Theorem \ref{maintheolin}
yields the following corollary on algebraic reflexivity:

\begin{theo}
Let $U$ and $V$ be finite-dimensional vector spaces, and $\calS$ be a reduced linear subspace of $\calL(U,V)$.
Assume that
\begin{itemize}
\item $\dim U \geq \dim \calS \dim V-2\dim V+3$ if $\# \K>2$;
\item $\dim U \geq \dim \calS \dim V-2\dim V+4$ if $\# \K=2$.
\end{itemize}
Then, $\calS$ is algebraically reflexive.
\end{theo}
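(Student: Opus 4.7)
The plan is to translate the statement directly into an instance of Theorem \ref{maintheolin} via the proposition just established, which asserts that $\calS$ is algebraically reflexive if and only if every range-compatible linear map on $\widehat{\calS}\subset\calL(\calS,V)$ is local. So the entire proof should amount to checking that $\widehat{\calS}$, regarded as a subspace of $\calL(\calS,V)$, fits the codimension hypothesis of Theorem \ref{maintheolin}.

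First, the reducedness assumption that no vector of $U$ lies in every kernel of $\calS$ ensures that $x\mapsto\widehat{x}$ is an injective linear map from $U$ to $\calL(\calS,V)$, so $\dim\widehat{\calS}=\dim U$. Fixing bases of $\calS$ and $V$ identifies $\calL(\calS,V)$ with $\Mat_{n,p}(\K)$, where $n:=\dim V$ and $p:=\dim\calS$, and turns $\widehat{\calS}$ into a linear subspace of $\Mat_{n,p}(\K)$ whose codimension is
$$\codim\widehat{\calS}=(\dim\calS)(\dim V)-\dim U.$$
A direct rewriting of the two hypotheses then reads
$$\codim\widehat{\calS}\leq 2\dim V-3=d_n(\K)\quad\text{if }\#\K>2,\qquad \codim\widehat{\calS}\leq 2\dim V-4=d_n(\K)\quad\text{if }\#\K=2,$$
which is exactly what Theorem \ref{maintheolin} demands.

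Therefore, provided $p=\dim\calS\geq 2$, Theorem \ref{maintheolin} applies to $\widehat{\calS}$ and gives that every range-compatible linear map on $\widehat{\calS}$ is local; by the proposition this is the algebraic reflexivity of $\calS$. The remaining degenerate cases $\dim\calS\leq 1$ pose no real difficulty and can be settled by hand: $\dim\calS=0$ combined with reducedness forces $V=0$, while $\dim\calS=1$ combined with reducedness forces $\calS=\K s_0$ with $s_0$ bijective, and then a quick additivity argument on the scalars $\lambda_x$ defined by $f(x)=\lambda_x s_0(x)$ shows $\calR(\calS)=\calS$. Since the argument is a pure unpacking of the proposition together with Theorem \ref{maintheolin}, the only potential obstacle is the arithmetic translation of the codimension inequality, which is immediate.
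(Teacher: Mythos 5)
Your proposal is correct and follows exactly the route the paper takes: the theorem is stated there as an immediate corollary of the proposition linking reflexivity to range-compatible linear maps on $\widehat{\calS}$ together with Theorem \ref{maintheolin}, using injectivity of $x\mapsto\widehat{x}$ (from reducedness) to get $\codim\widehat{\calS}=\dim\calS\dim V-\dim U\leq d_n(\K)$. Your explicit treatment of the degenerate cases $\dim\calS\leq 1$ (needed because Theorem \ref{maintheolin} assumes $p\geq 2$) is a small extra care the paper leaves implicit, and it is handled correctly.
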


Again, these conditions are optimal assuming $\dim V \geq 2$.

\subsection{Additional definition and notation}

When we talk of hyperplanes, we will always mean linear hyperplanes unless specified otherwise.

We denote by $\Mat_n(\K)$ the algebra of $n \times n$ square matrices with entries in $\K$, by
$\GL_n(\K)$ its group of invertible elements, by $\Mats_n(\K)$ its subspace of symmetric matrices, and by
$\Mata_n(\K)$ its subspace of alternating matrices (i.e. skew-symmetric matrices with all diagonal entries zero).
Given non-negative integers $m,n,p,q$ and respective subsets $\calA$ and $\calB$ of $\Mat_{m,p}(\K)$ and $\Mat_{n,q}(\K)$,
one sets
$$\calA \vee \calB:=\biggl\{\begin{bmatrix}
A & C \\
[0]_{n \times p} & B
\end{bmatrix} \mid A \in \calA,\; B \in \calB,\; C \in \Mat_{m,q}(\K)\biggr\} \subset \Mat_{m+n,p+q}(\K).$$

Given non-negative integers $n,p,q$ and respective subsets $\calA$ and $\calB$ of $\Mat_{n,p}(\K)$ and $\Mat_{n,q}(\K)$,
one sets
$$\calA \coprod \calB :=\biggl\{\begin{bmatrix}
A & B
\end{bmatrix} \mid A \in \calA, \; B \in \calB\biggr\}.$$

The rank of $M \in \Mat_{n,p}(\K)$ is denoted by $\rk M$, and the trace of a square matrix $M$
is denoted by $\tr(M)$. A similar notation is used for the trace of an endomorphism of a finite-dimensional vector space.

Given integers $i \in \lcro 1,n\rcro$ and $j \in \lcro 1,p\rcro$, we denote by $E_{i,j}$ the matrix of $\Mat_{n,p}(\K)$
with all entries zero except the one at the $(i,j)$-spot, which equals $1$. When we use this notation, the
integers $n$ and $p$ will always be obvious from the context.

We make the group $\GL_n(\K) \times \GL_p(\K)$ act on the set of linear subspaces of $\Mat_{n,p}(\K)$
by
$$(P,Q).\calV:=P\,\calV\,Q^{-1}.$$
Two linear subspaces of the same orbit will be called \textbf{equivalent}
(this means that they represent, in a change of bases, the same set of linear transformations from a $p$-dimensional vector space to an $n$-dimensional vector space).

When $U$ and $V$ are finite-dimensional, we consider the bilinear form
$$(u,v) \in \calL(U,V) \times \calL(V,U) \longmapsto \tr(v \circ u).$$
It is non-degenerate on both sides. In the rest of the text, orthogonality will always refer to this form, to the effect that,
given a subset $\calS$ of $\calL(U,V)$, one has
$$\calS^\bot:=\bigl\{v \in \calL(V,U) : \; \forall u \in \calL(U,V), \; \tr(v \circ u)=0\bigr\}.$$
Recall that $(\calS^\bot)^\bot=\calS$ whenever $\calS$ is a linear subspace of $\calL(U,V)$.

\begin{Def}
Let $U$ and $V$ be finite-dimensional vector spaces (with $n:=\dim V$ and $p:=\dim U$), $\calS$ be a linear subspace of $\calL(U,V)$, and
$F : \calS \rightarrow V$ be a range-compatible map.
Given bases $\bfB$ and $\bfC$, respectively, of $U$ and $V$,
we consider the space $\calM:=\{M_{\bfB,\bfC}(s) \mid s \in \calS\}$ and
define $F_{\bfB,\bfC} : \calM \rightarrow \K^n$ as the sole map which makes the following diagram commutative:
$$\xymatrix{
\calS \ar[rr]^{s \mapsto M_{\bfB,\bfC}(s)}_{\simeq} \ar[d]_{F} & & \calM \ar[d]^{F_{\bfB,\bfC}} \\
V \ar[rr]_{y \mapsto M_\bfC(y)}^{\simeq} & &  \K^n.
}$$
We say that $F_{\bfB,\bfC}$ represents $F$ in the bases $\bfB$ and $\bfC$.
\end{Def}

Note that changing the chosen bases amounts to replacing $F_{\bfB,\bfC}$ with a map of the form
$M \mapsto PF_{\bfB,\bfC}(P^{-1}MQ)$ for some $(P,Q)\in \GL_n(\K) \times \GL_p(\K)$.

\subsection{The Second Classification Theorem}\label{introsecondtheorem}

Before we can state our main classification theorem for range-compatible homomorphisms,
we need to discuss additional examples of such maps, beyond the local ones.

Our first example is a generalization of the one given after the statement of Theorem \ref{classtheo1}.
Let $\calS$ be a linear subspace of $\calL(U,V)$ for which we have a vector
$x \in U$ that satisfies $\dim \calS x=1$, and let $\alpha : \calS x \rightarrow \calS x$
be a group homomorphism.
Then, the homomorphism $s \mapsto \alpha(s(x))$ is range-compatible: indeed, given $s \in \calS$,
either $s(x) \neq 0$ and hence $\calS x=\K s(x) \subset \im s$, to the effect that $\alpha(s(x)) \in \im s$,
or $s(x)=0$ whence $\alpha(s(x))=0 \in \im s$.
On the other hand, $s \mapsto \alpha(s(x))$ is linear if and only if $\alpha$ is linear,
in which case we have a scalar $\lambda$ such that $\forall s \in \calS, \; \alpha(s(x))=\lambda\, s(x)=s(\lambda x)$,
whence $s \mapsto \alpha(s(x))$ is local.

In terms of matrices, this example can be restated as follows:
let $\calS$ be a linear subspace of $\Mat_1(\K) \vee \Mat_{n-1,p-1}(\K)$ that is not included in $\{0\} \coprod \Mat_{n,p-1}(\K)$,
and let $\alpha : \K \rightarrow \K$ be a group endomorphism.
Then,
$$M \in \calS \mapsto \begin{bmatrix}
\alpha(m_{1,1}) \\
[0]_{(n-1)\times 1}
\end{bmatrix}$$
is a range-compatible homomorphism, and it is local if and only if $\alpha$ is linear.

Now, we turn to less obvious examples of non-linear range-compatible homomorphisms:
assume that $\K$ has characteristic $2$; given a vector space $U$ over $\K$, denote by $U^{/2}$
the vector space over $\K$ with the same underlying abelian group as $U$ but with the new scalar multiplication ``$\bullet$"
defined from the former one ``$\cdot$" as
$$\lambda\bullet x:=\lambda^2\cdot x.$$
\begin{Def}
Given vector spaces $U$ and $V$ over $\K$,
a map $\alpha : U \rightarrow V$ is called \textbf{root-linear} when it is linear as a map from $U^{/2}$ to $V$,
i.e.\ $\alpha$ is a homomorphism such that
$$\forall (\lambda,x)\in \K\times U, \quad \alpha(\lambda^2 x)=\lambda\, \alpha(x).$$
\end{Def}
Given a root-linear form $\alpha$ on $\K$, we will show in Section \ref{symmetricsection} (this is a non-trivial result!) that the homomorphism
$$M \in \Mats_n(\K) \longmapsto \begin{bmatrix}
\alpha(m_{1,1}) \\
\alpha(m_{2,2}) \\
 \vdots \\
 \alpha(m_{n,n})
\end{bmatrix}$$
obtained by extracting the diagonal and applying $\alpha$ entry-wise
is range-compatible and that it is non-local if $\alpha \neq 0$.
For example, if $\K$ is a perfect field of characteristic $2$, we define $\sqrt{x}$ as the sole square root in $\K$ of the element $x \in \K$,
and we consider the inverse $\alpha : x \mapsto \sqrt{x}$ of the Frobenius automorphism of $\K$.
Note that over $\F_2$, root-linear maps are linear maps, and in particular the only non-zero root-linear form on $\F_2$
is the identity. In particular, the map
$$\begin{bmatrix}
a & b \\
b & c
\end{bmatrix} \in \Mats_2(\F_2) \longmapsto \begin{bmatrix}
a \\
c
\end{bmatrix}$$
is range-compatible but non-local. As in Section \ref{intro},
we extend this example by taking integers $n \geq 2$ and $p \geq 2$ and by considering the space $\calS=\Mats_2(\F_2) \vee \Mat_{n-2,p-2}(\F_2)$
and the map
$$F : M \in \calS \mapsto \begin{bmatrix}
m_{1,1} \\
m_{2,2} \\
[0]_{(n-2) \times 1}
\end{bmatrix};$$
one checks that $\codim \calS=2n-3$ and that $F$ is a non-local range-compatible linear map.
Therefore, the upper bound $2n-4$ in Theorem \ref{maintheolin} is optimal for $\K=\F_2$ and all integers $n \geq 2$ and $p \geq 2$.

\begin{Def}
Let $\calS$ be a linear subspace of $\calL(U,V)$. \\
We say that $\calS$ has \textbf{Type 1} when there is a vector $x \in U$ such that $\dim \calS x=1$. \\
We say that $\calS$ has \textbf{Type 2} when $\K$ has characteristic $2$ and
$\calS$ is represented, in well-chosen bases of $U$ and
$V$, by the matrix space $\Mats_2(\K) \vee \Mat_{n-2,p-2}(\K)$, where $p:=\dim U$ and $n:=\dim V$. \\
We say that $\calS$ has \textbf{Type 3} when $\K$ has characteristic $2$ and $\calS$ is represented, in well-chosen bases of $U$ and $V$, by the matrix space $\Mats_3(\K) \coprod \Mat_{3,p-3}(\K)$, where $p:=\dim U$.
\end{Def}

Note that all those types are incompatible: if $\calS$ has Type 2 or Type 3, then we see that $\dim \calS x \geq 2$ for all $x \in U \setminus \{0\}$,
which rules out Type $1$;
if $\calS$ has Type $2$, then there is a vector $x \in U$ such that $\dim \calS x=2$, whereas, if $\calS$ has Type $3$
we see that $\dim \calS x =3$ for all non-zero vectors $x \in U$.

We are now ready to state the Second Classification Theorem for range-compatible homomorphisms
from an operator space with small codimension over a field with more than $2$ elements:

\begin{theo}[Second Classification Theorem]\label{classtheo2}
Let $\K$ be a field with more than $2$ elements.
Let $U$ and $V$ be finite-dimensional vector spaces over $\K$, with respective dimensions
$p \geq 1$ and $n \geq 2$. Let $\calS$ be a linear subspace of $\calL(U,V)$ with $\codim \calS \leq 2n-3$.
Then:
\begin{enumerate}[(a)]
\item Every range-compatible linear map on $\calS$ is local.
\item If $\calS$ has none of Types 1 to 3, then every range-compatible homomorphism on $\calS$
is local.
\item If $\calS$ has Type 1, then, given a vector $x \in U$ such that $\dim \calS x=1$,
the range-compatible homomorphisms on $\calS$ are the sums of the local maps and the maps
of the form $s \mapsto \alpha(s(x))$ where $\alpha : \calS x \rightarrow \calS x$ is a group homomorphism.

\item If $\calS$ has Type 2, then, given bases of $U$ and $V$ in which $\calS$ is represented by the matrix
space $\Mats_2(\K) \vee \Mat_{n-2,p-2}(\K)$, the range-compatible homomorphisms on $\calS$ are represented by the sums of
the local maps and the maps of the form $M \mapsto \begin{bmatrix}
\alpha(m_{1,1}) \\
\alpha(m_{2,2}) \\
[0]_{(n-2) \times 1}
\end{bmatrix}$ where $\alpha$ is a root-linear form on $\K$.

\item If $\calS$ has Type 3, then, given bases of $U$ and $V$ in which $\calS$ is represented by the matrix
space $\Mats_3(\K) \coprod \Mat_{3,p-3}(\K)$, the range-compatible homomorphisms on $\calS$ are represented by the sums of
the local maps and the maps of the form $M \mapsto \begin{bmatrix}
\alpha(m_{1,1}) \\
\alpha(m_{2,2}) \\
\alpha(m_{3,3})
\end{bmatrix}$ where $\alpha$ is a root-linear form on $\K$.
\end{enumerate}
\end{theo}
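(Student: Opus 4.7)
The plan is to proceed by induction on $n := \dim V$, with the case $n = 2$ as a somewhat delicate base case and the induction step obtained by slicing $\calS$ along a well-chosen hyperplane of $V$.

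For the base case $n = 2$, the hypothesis $\codim \calS \leq 2n-3 = 1$ leaves only two possibilities: $\calS = \calL(U,V)$ or $\calS$ is a linear hyperplane of $\calL(U,V)$. The first case reduces to classifying range-compatible homomorphisms on $\Mat_{2,p}(\K)$ directly. The second case is analyzed by looking at a nonzero element of $\calS^\bot \subset \calL(V,U)$: its rank (which is $1$ or $2$) together with the action of $\GL_n(\K) \times \GL_p(\K)$ gives a short explicit list of matrix models for $\calS$ up to equivalence, on each of which one lists all range-compatible homomorphisms. In characteristic $2$, this case analysis is exactly where the $\Mats_2(\K) \vee \Mat_{n-2,p-2}(\K)$ model (Type 2) and its root-linear companions first appear; in odd characteristic, every range-compatible homomorphism is already local.

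For the induction step ($n \geq 3$), given a range-compatible homomorphism $F : \calS \to V$, I would pick a nonzero linear form $\varphi \in V^*$, set $H := \ker \varphi$, and study $F$ via the subspace $\calS_H := \calS \cap \calL(U,H)$. For $s \in \calS_H$ one has $\im s \subset H$, whence $\varphi(F(s)) = 0$, so $\varphi \circ F$ descends to a group homomorphism on the finite-dimensional quotient $\calS / \calS_H$. Meanwhile $F|_{\calS_H}$ is itself range-compatible as a map into $H$, and the plan is to apply the induction hypothesis to it inside $\calL(U,H)$. The bound needed is $\codim_{\calL(U,H)} \calS_H \leq 2(n-1) - 3 = 2n - 5$; from this, the induction provides a local map on $\calS_H$ (or an exceptional contribution), which one then extends back to $\calS$ using the scalar information packaged in $\varphi \circ F$.

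The main obstacle, and the heart of the argument, is that the naive codimension estimate only gives $\codim_{\calL(U,H)} \calS_H \leq \codim \calS \leq 2n-3$, which is two units too large. The key technical lemma will be that, because $\card \K > 2$, one can find a hyperplane $H \subset V$ with the sharper bound $\codim_{\calL(U,H)} \calS_H \leq 2n-5$ \emph{unless} $\calS$ falls into one of Types 1, 2, or 3. The exceptional configurations are handled by hand from the explicit matrix models: Type 1 yields the $\alpha$-twisted homomorphisms $s \mapsto \alpha(s(x))$ essentially by the construction given after Theorem \ref{classtheo1}; Types 2 and 3, which force the characteristic-$2$ hypothesis, are reduced to the classification of range-compatible homomorphisms on $\Mats_2(\K)$ and $\Mats_3(\K)$ carried out in Section \ref{symmetricsection}, where the root-linear forms enter.

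Once induction produces, for each generic hyperplane $H$, a vector $x_H \in U$ such that $F|_{\calS_H}$ agrees with evaluation at $x_H$ modulo $H$, the last step is to glue these local maps into a single local map on $\calS$: using two distinct generic hyperplanes $H_1, H_2$ and comparing the two local descriptions of $F$ on $\calS_{H_1} \cap \calS_{H_2}$, one forces $x_{H_1} = x_{H_2}$ (thanks again to $\card \K > 2$ for having enough hyperplanes), and then a dimension count shows that $F$ coincides on all of $\calS$ with evaluation at this common vector, up to an exceptional Type~1, 2, or 3 contribution.
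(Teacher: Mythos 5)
Your overall architecture (induction on $n=\dim V$, base case $n=2$ via $\calS^\bot$, exceptional types handled from explicit matrix models) matches the paper in spirit, but the slicing operation you chose for the induction step does not work, and the ``key technical lemma'' you rely on is false. Writing $H=\ker\varphi$ and $\calS_H=\calS\cap\calL(U,H)$, one has
$\codim_{\calL(U,H)}\calS_H=\codim_{\calL(U,V)}\calS-\bigl(p-\dim(\varphi\circ\calS)\bigr)$,
and $p-\dim(\varphi\circ\calS)=\dim\{x\in U:\ \calS x\subset H\}$. So to gain the two units you need when $\codim\calS=2n-3$, you must find a hyperplane $H$ and a \emph{two-dimensional} subspace $W\subset U$ with $\calS W\subset H$, i.e.\ $\calS$ must sit inside the codimension-$2$ space of operators sending $W$ into $H$. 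This generically fails and is not forced by excluding Types 1--3: for instance $\calS=\Mats_3(\K)$ with $\operatorname{char}\K\neq 2$ has $\codim\calS=3=2n-3$, has none of Types 1--3, and satisfies $\calS x=V$ for every $x\neq 0$ (so $\{x:\calS x\subset H\}=\{0\}$ for every hyperplane $H$, and $\codim_{\calL(U,H)}\calS_H=3>1$ for every $H$). The same failure occurs for a generic codimension-$(2n-3)$ subspace of $\Mat_{n,p}(\K)$. So your induction step cannot get started on precisely the spaces it is supposed to handle.

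The operation that does reduce the codimension is the dual one: quotient the \emph{target} by a line rather than cutting down to operators landing in a hyperplane. For $y\in V\setminus\{0\}$ and $\pi:V\twoheadrightarrow V/\K y$, the space $\calS\modu y:=\{\pi\circ s: s\in\calS\}$ satisfies $\codim(\calS\modu y)=\codim\calS-\dim(\calS^\bot y)$, so one only needs a vector $y$ with $\dim\calS^\bot y\geq 2$ (an ``adapted'' vector), which the paper shows exists in abundance outside a short explicit list of exceptions when $n=3$. A range-compatible $F$ on $\calS$ induces a range-compatible $F\modu y$ on $\calS\modu y$, and the gluing is then done with three linearly independent adapted vectors rather than two hyperplanes. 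Be aware also that even with the correct slicing, point (b) is far from a routine induction: the paper must control the ranks of the operators $t\mapsto t(y)$ on $\calS^\bot$, invoke the Atkinson--Lloyd classification of operator spaces of rank at most $2$, and reduce to $\dim U=2$ before the root-linear maps on $\Mats_2(\K)$ can be brought to bear; none of that machinery is visible in your sketch.
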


An important step in the proof of the Second Classification Theorem
is the description of all range-compatible homomorphisms on the space of all symmetric matrices. As this result is interesting in itself, we highlight it here:

\begin{theo}[Range-compatible homomorphisms on full spaces of symmetric matrices]\label{symmetrictheo}
Assume that $n \geq 2$.
If $\K$ has characteristic not $2$, then every
range-compatible homomorphism on $\Mats_n(\K)$ is local. \\
If $\K$ has characteristic $2$, then the group of all range-compatible homomorphisms on $\Mats_n(\K)$
is generated by the local maps together with the maps of the form
$$M \longmapsto \begin{bmatrix}
\alpha(m_{1,1}) \\
\alpha(m_{2,2}) \\
 \vdots \\
\alpha(m_{n,n})
\end{bmatrix},$$
where $\alpha$ is a root-linear form on $\K$.
\end{theo}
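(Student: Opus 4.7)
\emph{Proof plan.} The approach centers on rank-one symmetric matrices. For any $v \in \K^n$, we have $\im(vv^T) \subset \K v$, so range-compatibility yields a unique scalar $\phi(v) \in \K$ with $F(vv^T) = \phi(v)\,v$ (set $\phi(0) := 0$). Expanding $(u+v)(u+v)^T = uu^T + vv^T + (uv^T + vu^T)$ and using the additivity of $F$ produces the master identity
\[ F(uv^T + vu^T) = \phi(u+v)\,(u+v) - \phi(u)\,u - \phi(v)\,v. \]

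I first dispose of the characteristic $\neq 2$ case. Combining the master identity with its analogue for $(u-v)(u-v)^T$ and dividing by $2$ yields the parallelogram identities
\[ \phi(u+v) + \phi(u-v) = 2\phi(u), \qquad \phi(u+v) - \phi(u-v) = 2\phi(v), \]
whose sum shows that $\phi$ is additive on $\K^n$. Decomposing $\phi(v) = \sum_{i=1}^n \psi_i(v_i)$ into group homomorphisms $\psi_i : \K \to \K$ and exploiting the identity $\lambda = \bigl(\tfrac{\lambda+1}{2}\bigr)^2 - \bigl(\tfrac{\lambda-1}{2}\bigr)^2$, each $\psi_i$ is forced to be $\K$-linear, say $\psi_i(\lambda) = \lambda c_i$. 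Setting $c := (c_1, \ldots, c_n)$, both $F$ and the local map $M \mapsto Mc$ coincide on every $vv^T$ and, through the master identity, on every $uv^T + vu^T$. Since these matrices generate $\Mats_n(\K)$ as a group under addition and both maps are additive, $F$ is local.

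For characteristic $2$, I would argue by induction on $n$. The base case $n = 2$ is handled directly: parametrize $F(\lambda E_{i,i}) = \alpha_i(\lambda)\,e_i$ and $F(\lambda(E_{1,2} + E_{2,1})) = u(\lambda)\,e_1 + v(\lambda)\,e_2$, and impose range-compatibility on the rank-one family $c_0\,(1, \lambda)^T (1, \lambda)$ as $c_0, \lambda$ vary in $\K$; the resulting functional equations show that after subtracting a well-chosen local map the residual becomes $M \mapsto (\beta(m_{1,1}), \beta(m_{2,2}))^T$ for a root-linear form $\beta$. For the inductive step ($n \geq 3$), set $\calT_i := \{M \in \Mats_n(\K) : Me_i = 0 = e_i^T M\} \simeq \Mats_{n-1}(\K)$. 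For $M \in \calT_i$ the $i$-th entry of $F(M)$ vanishes by range-compatibility, and the remaining $n-1$ entries define a range-compatible homomorphism on $\Mats_{n-1}(\K)$; the induction hypothesis decomposes it as a local map plus a diagonal root-linear map $\alpha^{(i)}$. Comparing restrictions on the overlaps $\calT_i \cap \calT_j \simeq \Mats_{n-2}(\K)$ shows that the local parts are restrictions of a single $v \in \K^n$ and that all $\alpha^{(i)}$ coincide with a single root-linear form $\alpha$; after subtracting $M \mapsto Mv$ and the diagonal map $M \mapsto (\alpha(m_{k,k}))_k$, the residual $\tilde{F}$ vanishes on every $\calT_i$. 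The master identity applied to $(e_i + e_j)(e_i + e_j)^T$ then shows $\tilde{F}$ also vanishes on every $E_{i,j} + E_{j,i}$, hence on all of $\Mats_n(\K)$.

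The main obstacle is the characteristic $2$ case, especially the base case $n = 2$, where the family $c_0\,(1, \lambda)^T (1, \lambda)$ must be exploited as $c_0$ ranges over non-squares---this is essential because, in a non-perfect field of characteristic $2$, squares do not additively span $\K$, and $\beta$ must be correctly pinned down on every non-square coset by these additional constraints. Matching the $\alpha^{(i)}$ across different coordinate sub-blocks in the inductive step likewise requires careful bookkeeping of the local-map subtractions.
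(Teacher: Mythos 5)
Your proposal only addresses half of the theorem. The statement asserts that the group of range-compatible homomorphisms on $\Mats_n(\K)$ \emph{equals} the group generated by the local maps and the diagonal maps $M \mapsto \bigl(\alpha(m_{1,1}),\dots,\alpha(m_{n,n})\bigr)^T$ with $\alpha$ root-linear; this requires proving that in characteristic $2$ those diagonal maps are themselves range-compatible, which is not a formal consequence of your decomposition (a difference of two range-compatible maps need not be range-compatible, so exhibiting $F-(\text{local})$ as a diagonal root-linear map proves nothing about that map's range-compatibility). The paper flags this as a non-trivial point and proves it by first checking the claim on rank-one matrices $aXX^T$ (where $m_{i,i}=ax_i^2$ and root-linearity give $F(M)=\alpha(a)X\in\im M$), and then writing any symmetric matrix with a non-zero diagonal entry as $PDP^T$ with $D$ diagonal, hence as a sum of rank-$\leq 1$ symmetric matrices whose images sum to $\im M$. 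Nothing in your plan supplies this direction.

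Second, in the characteristic $\neq 2$ case, the step ``exploiting $\lambda=\bigl(\tfrac{\lambda+1}{2}\bigr)^2-\bigl(\tfrac{\lambda-1}{2}\bigr)^2$, each $\psi_i$ is forced to be $\K$-linear'' does not follow as stated. From the diagonal data alone (the values $\phi(\mu e_i)$ together with the difference-of-squares identity) one only obtains that $\delta_i:=\psi_i-\psi_i(1)\,\id_\K$ satisfies $\delta_i(\lambda\mu)=\lambda\delta_i(\mu)+\mu\delta_i(\lambda)$, i.e.\ $\delta_i$ is an additive derivation of $\K$, and non-zero derivations exist (e.g.\ $d/dt$ on $\Q(t)$). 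What actually kills $\delta_i$ is the off-diagonal consistency hidden in your master identity: computing $F\bigl(b(E_{i,j}+E_{j,i})\bigr)$ from a factorization $b=v_iv_j$ yields $v_i\psi_j(v_j)e_i+v_j\psi_i(v_i)e_j$, and well-definedness (compare the factorizations $(b,1)$ and $(1,b)$) forces $\psi_i(b)=b\,\psi_i(1)$. This repair stays entirely within your framework, but the justification you give is the wrong one. (The paper sidesteps the issue by working with the four row-functions $f,g,u,v$ and the full rank-one family $x\begin{bmatrix}1&t\\ t&t^2\end{bmatrix}$ with $x$ arbitrary.) Your characteristic-$2$ induction is structurally reasonable and close in spirit to the paper's (which patches $2\times 2$ corner blocks rather than $(n-1)\times(n-1)$ principal submatrices), but the $n=2$ functional equation, which is where the real work lies, is left entirely unproved.
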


\subsection{Strategy of proof, and structure of the article}

So far, the best known result on range-compatible homomorphisms on large operator spaces
was Theorem \ref{nonoptimaltheo}. The approach featured in \cite{dSPclass} seems to have delivered all its potential,
and brand new methods are unavoidable to prove both Theorem \ref{maintheolin} and the Second Classification Theorem. Thus, all the techniques in this article are new except the ones we shall use to prove point (c) of Theorem \ref{classtheo2}, where we borrow the line of reasoning from \cite{dSPclass}.

Let us explain the key ingredients of the new method.
The most important new insight is the \emph{projection technique}
(Section \ref{projectiontechnique}): given a non-zero vector $y$,
one sees that a range-compatible homomorphism $F$ on $\calS \subset \calL(U,V)$
induces a range-compatible homomorphism on the space $\calS \modu y$
of all linear maps $\pi \circ s$, with $s \in \calS$, where $\pi : V \twoheadrightarrow V/\K y$ is the standard projection.
With this technique, we can obtain results on range-compatible homomorphisms
by using an induction on the dimension of the target space $V$: this is the exact opposite of the proof method of
Theorem \ref{nonoptimaltheo} (from \cite{dSPclass}), which was based on an induction on
the dimension of the source space $U$!
This projection technique will help us obtain Theorems \ref{maintheolin}, \ref{classtheo1} and \ref{classtheo2}.
In each case, it is necessary to choose a sufficient number of \emph{adapted} vectors $y \in V$.
In short, a vector $y$ of $V$ can be considered as adapted to $\calS$ when the codimension of
$\calS \modu y$ in $\calL(U,V/\K y)$ is small enough (with respect to the theorem we wish to prove),
so that we can apply an induction hypothesis to $\calS \modu y$. If we have enough linearly independent
adapted vectors, then the range-compatible homomorphisms on $\calS$ can be easily computed
(in the case of the First Classification Theorem, two adapted vectors are sufficient, whereas three are needed for to yield
Theorem \ref{maintheolin} and point (b) of Theorem \ref{classtheo2}).

Another technique that will be used frequently is the \emph{splitting} of matrix spaces.
When a matrix space $\calS$ splits as $\calA \coprod \calB$ for some matrix spaces $\calA$ and $\calB$,
the range-compatible homomorphisms on $\calS$ are easily deduced from those on $\calA$ and $\calB$
(see Lemma \ref{splittinglemma}).

The rest of the article is laid out as follows.
Section \ref{maintechniques} is devoted to the basic techniques that are used throughout the article:
the first one is Theorem \ref{classtheo1} when the source space has dimension $1$, which is basically a reformulation
of a classical characterization of scalar multiples of the identity (Lemma \ref{carachom}); then, we shall quickly explain the embedding and splitting techniques, and subsequently use them to compute the range-compatible homomorphisms in specific situations, most notably in the one
of the space of all linear maps from $U$ to $V$ (Section \ref{totalspacesection}); the next paragraph
(Section \ref{projectiontechnique}) deals with the projection technique. In the last two paragraphs of Section \ref{maintechniques},
we use those techniques to prove Theorem \ref{classtheo1} (with a completely different line of reasoning than in \cite{dSPclass}), and we derive
point (c) of Theorem \ref{classtheo2} from Theorem \ref{classtheo1} by following the method of \cite{dSPclass}.

The next section is devoted to the proof of Theorem \ref{symmetrictheo}, i.e.\ the determination of all range-compatible homomorphisms on the space
of all $n \times n$ symmetric matrices (with $n \geq 2$).
Points (d) and (e) of Theorem \ref{classtheo2} follow as easy corollaries.

The next section is devoted to the proof of Theorem \ref{maintheolin}.
In it, we introduce the notion of an $\calS$-adapted vector (in the context of Theorems \ref{maintheolin} and \ref{classtheo2}).
Then, we demonstrate (Section \ref{adaptedvectorssection}) that a triple of linearly independent $\calS$-adapted vectors
exists except in a very specific situation with $n=3$,
and we prove a key result that points out how important it is to have such a triple (Lemma \ref{3vectorslemma}).

Equipped with those results, and after a quick examination of the case $n=2$ (Section \ref{n=2section}),
we prove Theorem \ref{maintheolin} by induction on $n$, first for fields with more than $2$ elements
(Section \ref{linearmorethan2}), and then for fields with two elements (Section \ref{linear2}).

At that point of the article, the only statement of Theorem \ref{classtheo2} that will remain unproven
is point (b). Section \ref{nonlinearsection} is devoted to its proof:
again, this is an inductive proof, in which we will assume that the space $\calS$ has neither Type 1 nor Type 3 and that there is a non-linear range-compatible homomorphism on $\calS$, and we will prove that $\calS$ must have Type 2.
A major key in the proof lies in the analysis of the orthogonal space $\calS^\bot$:
under the above assumptions, we shall prove that all the operators in the dual operator space $\widehat{\calS^\bot}$ have rank less than $3$, and
the classification of spaces of matrices with rank less than $3$ will help us have a better grasp of the actual structure of $\calS$.

In the final section, we will apply Theorem \ref{classtheo2} to obtain a classification of all
range-restricting and range-preserving homomorphisms on linear subspaces of $\Mat_{n,p}(\K)$
with codimension at most $d_n(\K)$. These notions are defined in Section \ref{rangerestrictingsection}.
The ultimate results classify the range-preserving semi-linear maps under the same condition on the codimension of
the matrix space under consideration: they are needed in the study of full-rank preserving linear maps on
large spaces of rectangular matrices.

\section{Main techniques}\label{maintechniques}

\subsection{The case $\dim U=1$}\label{dim1section}

The following lemma is known but we reprove it for the sake of completeness.
Carefully reformulated, it yields the case $\dim U=1$ in the First Classification Theorem.

\begin{lemma}\label{carachom}
Let $f : V \rightarrow V$ be a group homomorphism such that
$f(x) \in \K x$ for all $x \in V$. \\
If $\dim V \neq 1$ or $f$ is linear, then there exists $\lambda \in \K$ such that $f : x \mapsto \lambda\,x$.
\end{lemma}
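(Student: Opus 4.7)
The plan is to exploit the hypothesis $f(x)\in\K x$ to define, for every non-zero vector $x\in V$, a scalar $\lambda_x\in\K$ with $f(x)=\lambda_x\,x$, and then to show that this scalar does not in fact depend on $x$. The whole argument reduces to showing that the assignment $x\mapsto \lambda_x$ is constant on $V\setminus\{0\}$, at which point we extend the formula to $x=0$ trivially.

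First I would dispense with the easy case $\dim V=1$, where the linearity of $f$ is needed. Fixing a non-zero vector $v_0\in V$ and setting $\lambda:=\lambda_{v_0}$, every $x\in V$ can be written as $x=\mu\,v_0$ for some $\mu\in\K$, and by linearity $f(x)=\mu\,f(v_0)=\mu\lambda\,v_0=\lambda\,x$. This handles the degenerate case and explains why the assumption that $f$ be linear cannot be dropped here: any non-linear group endomorphism of $(\K,+)$ (which exists as soon as $\K$ is non-prime) provides a counterexample when $\dim V=1$ without the linearity assumption.

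Now assume $\dim V\geq 2$. The main step is to show that for any two non-zero vectors $x,y\in V$, we have $\lambda_x=\lambda_y$. Suppose first that $x$ and $y$ are linearly independent. Then $x+y\neq 0$, so $f(x+y)=\lambda_{x+y}(x+y)$; on the other hand the additivity of $f$ gives
\[
\lambda_{x+y}\,x+\lambda_{x+y}\,y=f(x)+f(y)=\lambda_x\,x+\lambda_y\,y,
\]
and the linear independence of $(x,y)$ forces $\lambda_x=\lambda_{x+y}=\lambda_y$. If instead $x$ and $y$ are collinear, I would introduce an auxiliary vector $z\in V$ linearly independent from $x$, which exists because $\dim V\geq 2$; then $z$ is also linearly independent from $y$, and the previous paragraph applied twice yields $\lambda_x=\lambda_z=\lambda_y$.

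There is no serious obstacle: the computation only uses additivity of $f$ and the fact that a two-dimensional subspace contains enough vectors in generic position. The only subtle point to keep in mind is that the linearity hypothesis is genuinely used only in the dimension-one case; once $\dim V\geq 2$, additivity alone is strong enough, because the equation $\lambda_{x+y}(x+y)=\lambda_x x+\lambda_y y$ immediately forces the scalar to be common.
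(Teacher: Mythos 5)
Your proof is correct and follows essentially the same route as the paper's: define $\lambda_x$ for each non-zero $x$, use additivity on a linearly independent pair to get $\lambda_x=\lambda_{x+y}=\lambda_y$, and pass through an auxiliary vector for the collinear case. The only (harmless) omission is the trivial case $\dim V=0$, which the statement also covers.
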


\begin{proof}
If $\dim V=1$ and $f$ is linear, or if $\dim V=0$, then the result is straightforward.

Assume now that $\dim V> 1$.
For every $x \in V \setminus \{0\}$, there is a unique $\lambda_x \in \K$ such that $f(x)=\lambda_x\,x$. \\
Let $y$ and $z$ be linearly independent vectors of $V$. Then,
$\lambda_{y+z}(y+z)=f(y+z)=f(y)+f(z)=\lambda_y\,y+\lambda_z\,z$, which yields $\lambda_y=\lambda_{y+z}=\lambda_z$. \\
Given two linearly dependent vectors $y$ and $z$ of $V \setminus \{0\}$, one may choose $x \in V \setminus \K y$.
Applying the previous result to both pairs $(x,y)$ and $(x,z)$ leads to $\lambda_y=\lambda_x=\lambda_z$.
We deduce that the map $x \in V \setminus \{0\} \mapsto \lambda_x$ has a sole value $\lambda$, which yields
$\forall x \in V, \; f(x)=\lambda\,x$ since $f(0)=0$.
\end{proof}

\begin{cor}\label{dimU=1}
Assume that $\dim U=1$, and let $\calS$ be a linear subspace of $\calL(U,V)$.
Let $F : \calS \rightarrow V$ be a range-compatible map. Assume that $\dim \calS\neq 1$ or $F$ is linear. Then, $F$ is local.
\end{cor}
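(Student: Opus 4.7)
The plan is to reduce the corollary to Lemma~\ref{carachom} by identifying $\calS$ with a linear subspace of $V$ via evaluation. First, I would fix a basis vector $e$ of $U$. Since $U=\K e$, the evaluation map $\varphi : s \in \calS \mapsto s(e) \in V$ is a linear injection whose image $W := \calS e$ is a linear subspace of $V$ with $\dim W=\dim \calS$.

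Next, I would transport $F$ across $\varphi$, that is, define $g := F \circ \varphi^{-1} : W \to V$. This map is a group homomorphism (and linear if $F$ is). For every $s \in \calS$, one has $\im s = \K\, s(e)$, so the range-compatibility of $F$ translates into
$$\forall w \in W, \quad g(w) \in \K w.$$
In particular $g(w) \in W$ for every $w \in W$, so $g$ can be viewed as a homomorphism from $W$ to itself satisfying the hypothesis of Lemma~\ref{carachom}.

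Finally, I would apply Lemma~\ref{carachom} to $g$ with $W$ playing the role of $V$: the assumption ``$\dim \calS \neq 1$ or $F$ linear'' is exactly ``$\dim W \neq 1$ or $g$ linear'', so the lemma produces a scalar $\lambda \in \K$ with $g(w) = \lambda w$ for all $w \in W$. Translating back through $\varphi$, $F(s) = \lambda\, s(e) = s(\lambda e)$ for all $s \in \calS$, which exhibits $F$ as the evaluation at $\lambda e \in U$, i.e.\ local. There is no genuine obstacle here; the only mildly subtle point is that range-compatibility automatically forces $g$ to stabilize $W$, which is precisely what allows the lemma (formulated for endomorphisms of a single space) to apply.
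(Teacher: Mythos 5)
Your proof is correct and is essentially the paper's own argument: both fix a nonzero vector of $U$, use the evaluation isomorphism $s \mapsto s(e)$ onto the subspace $\calS e \subset V$, observe that range-compatibility forces the transported map to be an endomorphism of that subspace satisfying $g(w) \in \K w$, and then invoke Lemma~\ref{carachom}. Nothing further to add.
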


\begin{proof}
Choosing a non-zero vector $x_1$ of $U$, we see that $s \mapsto s(x_1)$ defines a linear isomorphism from $\calS$
to a linear subspace $V_0$ of $V$, yielding a group homomorphism $f : V_0 \rightarrow V$ such that
$f(s(x_1))=F(s)$ for all $s \in \calS$, and $f$ is linear if and only if $F$ is linear.
Since $F$ is range-compatible, we deduce that $f(x) \in \K x$ for all
$x \in V_0$. In particular, $f$ is an endomorphism of $V_0$.
As $\dim V_0=\dim \calS$, Lemma \ref{carachom}
yields some $\lambda \in \K$ such that $F(s)=\lambda s(x_1)=s(\lambda x_1)$ for all $s \in \calS$.
Thus, $F$ is local.
\end{proof}

\subsection{Embedding and splitting techniques}

The following lemma is obvious but will be very useful in our proofs.

\begin{lemma}[Embedding Lemma]\label{extensionlemma}
Let $\calS$ be a linear subspace of $\Mat_{n,p}(\K)$, and let $n'$ be a non-negative integer.
Consider the space $\calS' \subset \Mat_{n+n',p}(\K)$ of all matrices of the form $\begin{bmatrix}
M \\
[0]_{n' \times p}
\end{bmatrix}$ with $M \in \calS$, and let $F' : \calS' \rightarrow \K^{n+n'}$ be a range-compatible homomorphism. \\
Then, there is a range-compatible homomorphism $F : \calS \rightarrow \K^n$ such that
$$\forall M \in \calS, \; F'\Biggl(\begin{bmatrix}
M \\
[0]_{n' \times p}
\end{bmatrix}\Biggr)=\begin{bmatrix}
F(M) \\
[0]_{n' \times p}
\end{bmatrix}.$$
\end{lemma}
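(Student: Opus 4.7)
The plan is to exploit the fact that every matrix in $\calS'$ has its range contained in the coordinate subspace $\K^n\times\{0\}^{n'}$ of $\K^{n+n'}$, so that the map $F'$ is automatically forced to take values in that same coordinate subspace; the desired $F$ is then obtained simply by discarding the trailing zeros.

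More precisely, I would first observe that for every $M \in \calS$ the associated matrix $M':=\begin{bmatrix}M\\ [0]_{n'\times p}\end{bmatrix}$ satisfies $\im M' \subset \K^n\times\{0\}^{n'}$, since multiplying $M'$ by any column vector leaves the last $n'$ entries equal to zero. By range-compatibility of $F'$, this yields $F'(M') \in \K^n\times\{0\}^{n'}$, so $F'(M')$ can be written uniquely as $\begin{bmatrix}F(M)\\ [0]_{n'\times 1}\end{bmatrix}$ for some $F(M)\in \K^n$. This defines a map $F:\calS\rightarrow \K^n$ satisfying the desired identity.

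Next I would check that $F$ is a group homomorphism: since $M\mapsto M'$ is a group homomorphism from $(\calS,+)$ to $(\calS',+)$ and $F'$ is itself a group homomorphism, the composition $M\mapsto F'(M')$ is a group homomorphism, and taking the first $n$ coordinates is a homomorphism of abelian groups, whence $F$ is additive. Finally, for range-compatibility, I would note that any vector of $\K^{n+n'}$ which both lies in $\im M'$ and has trailing block zero must have its leading block in $\im M$: indeed, if $\begin{bmatrix}v\\ 0\end{bmatrix}=M'X$ for some $X\in\K^p$, then $v=MX\in \im M$. Applying this to $\begin{bmatrix}F(M)\\ 0\end{bmatrix}=F'(M')\in\im M'$ gives $F(M)\in\im M$.

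There is no real obstacle here: the lemma is essentially a bookkeeping statement reflecting that placing zero rows below a matrix forces any range-compatible value to live in the corresponding coordinate subspace. The only thing to be careful about is to invoke range-compatibility twice — once on $\calS'$ to justify that the lower block of $F'(M')$ vanishes and that $F$ is well-defined into $\K^n$, and once more to deduce that the resulting vector actually belongs to $\im M$.
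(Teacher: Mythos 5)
Your proof is correct and is exactly the argument the paper has in mind: the paper states this lemma as "obvious" and omits the proof, and your two uses of range-compatibility (first to see that $F'(M')$ lands in $\K^n\times\{0\}^{n'}$, then to pull back $F(M)\in\im M$) together with the additivity of coordinate projection constitute the intended verification.
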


The next lemma deals with the situation of a ``split" matrix space:

\begin{lemma}[Splitting Lemma]\label{splittinglemma}
Let $n,p,q$ be non-negative integers, and $\calA$ and $\calB$ be linear subspaces, respectively, of $\Mat_{n,p}(\K)$ and $\Mat_{n,q}(\K)$.

Given maps $f : \calA \rightarrow \K^n$ and $g : \calB \rightarrow \K^n$,
set
$$f \coprod g : \begin{bmatrix}
A & B
\end{bmatrix} \in \calA \coprod \calB \longmapsto f(A)+g(B).$$
Then:
\begin{enumerate}[(a)]
\item The homomorphisms from $\calA \coprod \calB$ to $\K^n$
are the maps of the form $f \coprod g$, where $f\in \Hom(\calA,\K^n)$ and $g \in \Hom(\calB,\K^n)$.
Moreover, every homomorphism from $\calA \coprod \calB$ to $\K^n$ may be expressed in a unique fashion as $f \coprod g$.
\item The linear maps from $\calA \coprod \calB$ to $\K^n$
are the maps of the form $f \coprod g$, where $f\in \calL(\calA,\K^n)$ and $g \in \calL(\calB,\K^n)$.
\item Given $f \in \Hom(\calA,\K^n)$ and $g \in \Hom(\calB,\K^n)$, the map
$f \coprod g$ is range-compatible if and only if $f$ and $g$ are range-compatible.
\item Given $f \in \Hom(\calA,\K^n)$ and $g \in \Hom(\calB,\K^n)$, the map
$f \coprod g$ is local if and only if $f$ and $g$ are local.
\end{enumerate}
\end{lemma}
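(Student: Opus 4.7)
The plan is to reduce every point to the elementary observation that a block matrix $[A\ B]$ can be written as the sum $[A\ 0_{n \times q}] + [0_{n \times p}\ B]$ and that its column space decomposes as $\im[A\ B] = \im A + \im B$. Everything then follows by evaluating an arbitrary homomorphism on each of the two summands.

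For point (a), given a homomorphism $H : \calA \coprod \calB \rightarrow \K^n$, I would set $f(A) := H([A\ 0_{n \times q}])$ and $g(B) := H([0_{n \times p}\ B])$. Additivity of $H$ and of the block construction immediately give that $f$ and $g$ are homomorphisms and that $H = f \coprod g$. Conversely any map of the form $f \coprod g$ with $f,g$ homomorphisms is clearly additive. Uniqueness is obtained by specializing: substituting $B = 0$ recovers $f$ from $f \coprod g$, and substituting $A = 0$ recovers $g$. Point (b) is obtained in exactly the same way, observing that $H$ respects scalar multiplication if and only if both $f$ and $g$ do (the uniqueness from (a) does the bookkeeping).

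For point (c), I would use the identity $\im[A\ B] = \im A + \im B$. If $f$ and $g$ are range-compatible, then $f(A) + g(B) \in \im A + \im B = \im[A\ B]$. Conversely, assume $f \coprod g$ is range-compatible. Setting $B = 0$ yields $f(A) = (f \coprod g)([A\ 0]) \in \im[A\ 0] = \im A$, and symmetrically $g(B) \in \im B$. For point (d), I would write a potential ``evaluation vector'' in block form $X = \begin{bmatrix} X_1 \\ X_2 \end{bmatrix}$ with $X_1 \in \K^p$ and $X_2 \in \K^q$, so that $[A\ B]X = AX_1 + BX_2$. If $f$ is local via some $X_1$ and $g$ is local via some $X_2$, this same $X$ shows $f \coprod g$ is local. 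Conversely, if $f \coprod g$ is local via $X$, specializing to $B = 0$ and to $A = 0$ shows $f$ and $g$ are local via $X_1$ and $X_2$, respectively.

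None of the steps presents a genuine obstacle; the lemma is entirely formal. The only point where one must be slightly careful is (c), where the identity $\im[A\ B] = \im A + \im B$ needs to be stated explicitly (it is just the fact that the image of the linear map $\K^p \oplus \K^q \to \K^n$ induced by $[A\ B]$ is the sum of the images of $A$ and $B$). Everything else is a direct consequence of additivity together with the uniqueness clause obtained in (a).
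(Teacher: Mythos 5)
Your proposal is correct and follows essentially the same route as the paper: define $f$ and $g$ by evaluating the given homomorphism on the blocks $\begin{bmatrix} A & 0 \end{bmatrix}$ and $\begin{bmatrix} 0 & B \end{bmatrix}$, and split the evaluation vector $X$ into $X_1$ and $X_2$ for locality. The only difference is that you spell out the identity $\im \begin{bmatrix} A & B\end{bmatrix} = \im A + \im B$ for point (c), which the paper leaves as an obvious remark.
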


\begin{proof}
The first statement is obvious. More precisely, given
$F \in \Hom(\calA \coprod \calB, \K^n)$, we define $f : A \in \calA \mapsto F\Bigl(\begin{bmatrix}
A & [0]_{n \times q}
\end{bmatrix}\Bigr)$ and
$g :
B \in \calB \mapsto F\Bigl(\begin{bmatrix}
[0]_{n \times p} & B
\end{bmatrix}\Bigr)$, and we see that $F=f \coprod g$.
It is clear that $f \coprod g$ is linear if and only if $f$ and $g$ are linear, which yields point (b).

If $f$ and $g$ are range-compatible, then it is obvious that $F$ is also range-compatible.
The converse is also clear from the above definition of $f$ and $g$.

If $F$ is local, we have a vector $X \in \K^{p+q}$ such that $F(M)=MX$ for all $M \in \calA \coprod \calB$;
splitting $X=\begin{bmatrix}
X_1 \\
X_2
\end{bmatrix}$ with $X_1 \in \K^p$ and $X_2 \in \K^q$, we see that $f(A)=AX_1$ and $g(B)=BX_2$ for all $(A,B) \in \calA \times \calB$,
whence $f$ and $g$ are both local. The converse statement is straightforward.
\end{proof}

\subsection{The case of the full space of linear maps}\label{totalspacesection}

Using the splitting technique and the case $\dim U=1$, we can give a quick proof of
the most basic result on range-compatible homomorphisms:

\begin{prop}\label{espacetotal}
Let $U$ and $V$ be finite-dimensional vector spaces.
\begin{enumerate}[(a)]
\item Every range-compatible linear map on $\calL(U,V)$ is local.
\item If $\dim V>1$, then every range-compatible homomorphism on $\calL(U,V)$ is local.
\end{enumerate}
\end{prop}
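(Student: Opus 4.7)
The plan is to reduce immediately to the case $\dim U = 1$ (already handled by Corollary \ref{dimU=1}) via the Splitting Lemma applied column by column.

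Fix bases of $U$ and $V$ so that it suffices to prove the analogous statement for matrix spaces: every range-compatible homomorphism $F : \Mat_{n,p}(\K) \rightarrow \K^n$ is local under the assumption of (a) (linearity of $F$) or (b) ($n \geq 2$). Observe that $\Mat_{n,p}(\K)$ is the iterated split
$$\Mat_{n,1}(\K) \coprod \Mat_{n,1}(\K) \coprod \cdots \coprod \Mat_{n,1}(\K) \quad (p\text{ copies}).$$
By repeated application of the Splitting Lemma, any homomorphism $F$ on this space can be written uniquely as $F = f_1 \coprod \cdots \coprod f_p$ with $f_j \in \Hom(\Mat_{n,1}(\K), \K^n)$; moreover $F$ is range-compatible (respectively local, respectively linear) if and only if each $f_j$ is.

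Now each $f_j$ is a range-compatible homomorphism on $\Mat_{n,1}(\K)$, which corresponds to the total space $\calL(\K, \K^n)$, a case with $\dim U = 1$. The hypotheses of Corollary \ref{dimU=1} are satisfied in both settings: under (a) the $f_j$ inherit linearity from $F$; under (b) the domain $\Mat_{n,1}(\K)$ has dimension $n = \dim V \geq 2 \neq 1$. In either case Corollary \ref{dimU=1} yields that each $f_j$ is local, and by part (d) of the Splitting Lemma we conclude that $F$ itself is local.

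There is essentially no obstacle here; the only point requiring minor care is the verification that the Splitting Lemma extends from two factors to $p$ factors, which is an immediate induction on $p$.
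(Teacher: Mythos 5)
Your proof is correct and follows essentially the same route as the paper: reduce to the matrix setting, split off the columns via the Splitting Lemma, and invoke Corollary \ref{dimU=1} for each $\Mat_{n,1}(\K)$ factor (the paper phrases the column-splitting as an induction on $p$ using $\Mat_{n,p}(\K)=\Mat_{n,p-1}(\K)\coprod \Mat_{n,1}(\K)$, which is equivalent to your $p$-fold decomposition). The hypotheses of Corollary \ref{dimU=1} are checked correctly in both cases (a) and (b), so there is nothing to add.
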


\begin{proof}
Setting $p:=\dim U$ and $n:=\dim V$, it suffices to prove the matrix versions of the statements, that is:
\begin{enumerate}[(a)]
\item Every range-compatible linear map on $\Mat_{n,p}(\K)$ is local.
\item If $n>1$, then every range-compatible homomorphism on $\Mat_{n,p}(\K)$ is local.
\end{enumerate}
The case $p=1$ is a simple reformulation of Lemma \ref{dimU=1} into the matrix setting.
From there, both results are obtained by induction on $p$ by noting that $\Mat_{n,p}(\K)=\Mat_{n,p-1}(\K) \coprod \Mat_{n,1}(\K)$
and by using the Splitting Lemma.
\end{proof}

\subsection{The projection technique}\label{projectiontechnique}

Now, we turn to a more profound technique that will help us perform inductive proofs.

\begin{lemma}[Projection Lemma]
Let $\calS$ be a linear subspace of $\calL(U,V)$, and $V_0$ be a linear subspace of $V$.
Let $F : \calS \rightarrow V$ be a range-compatible homomorphism.
Denote by $\pi : V \twoheadrightarrow V/V_0$ the canonical projection, and by
$\calS \modu V_0$ the space of all linear maps $\pi \circ s$ with $s \in \calS$.
Then, there is a unique range-compatible homomorphism $F \modu V_0 : \calS \modu V_0 \rightarrow V/V_0$
such that $\forall s \in \calS, \; (F\modu V_0)(\pi \circ s)=\pi(F(s))$,
i.e.\ the following diagram is commutative:
$$\xymatrix{
\calS \ar[rr]^F \ar[d]_{s \mapsto \pi \circ s} & & V \ar[d]^\pi \\
\calS \modu V_0 \ar[rr]_{F \modu V_0} & & V/V_0.
}$$
Moreover, $F \modu V_0$ is linear whenever $F$ is linear. \\
In particular, given a non-zero vector $y \in V$, one denotes by $F \modu y$ the projected map $F \modu \K y$, and by $\calS \modu y$
the operator space $\calS \modu \K y$.
\end{lemma}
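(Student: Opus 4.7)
The plan is to construct $F \modu V_0$ by showing that the composite $\pi \circ F : \calS \to V/V_0$ factors through the canonical surjective homomorphism $\varphi : s \mapsto \pi \circ s$ from $\calS$ onto $\calS \modu V_0$. Uniqueness of $F \modu V_0$ will be automatic once existence is settled, because $\varphi$ is surjective and the commutativity of the diagram pins down the value of $F \modu V_0$ on every element of $\calS \modu V_0$.

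The key step is showing that $\pi \circ F$ vanishes on the kernel of $\varphi$, which is where range-compatibility intervenes. First I would take $s \in \ker \varphi$, i.e., $s \in \calS$ with $\pi \circ s = 0$, which means $\im s \subset V_0$. Since $F$ is range-compatible, $F(s) \in \im s \subset V_0$, hence $\pi(F(s)) = 0$. Because $F$ and $\pi$ are group homomorphisms, the map $\pi \circ F$ is a homomorphism from $\calS$ to $V/V_0$ vanishing on $\ker \varphi$, so it factors uniquely through $\varphi$ as a homomorphism $F \modu V_0 : \calS \modu V_0 \to V/V_0$ with $(F \modu V_0) \circ \varphi = \pi \circ F$. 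This is exactly the required commutativity, and passing through $\varphi$ preserves linearity, so $F \modu V_0$ is linear whenever $F$ is.

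It then remains to check that $F \modu V_0$ is itself range-compatible. Given $t \in \calS \modu V_0$, I would pick a preimage $s \in \calS$ with $t = \pi \circ s$, and compute
$$(F \modu V_0)(t) = \pi(F(s)) \in \pi(\im s) = \im(\pi \circ s) = \im t,$$
using again that $F(s) \in \im s$. This final verification is essentially a one-line calculation.

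I do not expect any serious obstacle: the whole argument is a straightforward diagram chase, and the only nontrivial ingredient is the observation that range-compatibility forces $F$ to send operators with image inside $V_0$ into $V_0$, which is precisely what makes the factorization through $\pi$ possible.
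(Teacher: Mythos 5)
Your proof is correct and follows the same route as the paper: both factor $\pi\circ F$ through $s\mapsto\pi\circ s$ by observing that $\pi\circ s=0$ forces $\im s\subset V_0$ and hence $\pi(F(s))=0$ by range-compatibility. Your explicit verification that $F\modu V_0$ is itself range-compatible is a detail the paper leaves implicit, but the argument is identical in substance.
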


\begin{proof}
It suffices to apply the factorization theorem for group homomorphisms (respectively, for linear maps) as,
for any $s \in \calS$ satisfying $\pi \circ s=0$, one finds $\im s \subset V_0$ and hence $\pi(F(s))=0$ since $F(s) \in \im s$.
\end{proof}

In terms of matrices, the special case when $V_0$ is a linear hyperplane of $V$ reads as follows:

\begin{lemma}\label{ligneparligne}
Let $\calS$ be a linear subspace of $\Mat_{n,p}(\K)$, and $F$ be a range-compatible homomorphism (respectively, linear map) on $\calS$.
Fix $i \in \lcro 1,n\rcro$.
For $M \in \calS$, denote by $L_i(M)$ the $i$-th row of $M$. For $X \in \K^n$, denote by $X_i$ its $i$-th entry.
Then, there is a group homomorphism (respectively, a linear form) $F_i : L_i(\calS) \rightarrow \K$ such that
$$\forall M \in \calS, \; F(M)_i=F_i(L_i(M)).$$
\end{lemma}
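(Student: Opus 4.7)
The plan is to view this as a direct matrix reformulation of the Projection Lemma, with $V_0$ chosen as the coordinate hyperplane that cuts out the $i$-th entry. Concretely, I would set $V := \K^n$ and $V_0 := \{X \in \K^n : X_i = 0\}$, so that $V_0$ is a linear hyperplane and the canonical projection $\pi : V \twoheadrightarrow V/V_0$ can be identified with the $i$-th coordinate form $X \mapsto X_i$, under which $V/V_0 \cong \K$.

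Under this identification, for any matrix $M \in \Mat_{n,p}(\K)$ seen as a linear map $\K^p \to \K^n$, the composition $\pi \circ M$ is the linear form $X \mapsto (MX)_i$, which is precisely the $i$-th row $L_i(M)$ regarded as an element of $\Mat_{1,p}(\K) \simeq \calL(\K^p, \K)$. Consequently, the operator space $\calS \modu V_0$ is canonically identified with $L_i(\calS)$. Applying the Projection Lemma then produces a group homomorphism $F \modu V_0 : L_i(\calS) \to \K$ such that
$$(F \modu V_0)(L_i(M)) = \pi(F(M)) = F(M)_i \quad \text{for every } M \in \calS,$$
and setting $F_i := F \modu V_0$ gives the desired map. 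The same lemma guarantees that $F_i$ is linear as soon as $F$ is.

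The only real content, which is already buried in the Projection Lemma, is well-definedness, and this is where range-compatibility is actually used: if $M, M' \in \calS$ satisfy $L_i(M) = L_i(M')$, then $L_i(M - M') = 0$, so $\im(M - M') \subset V_0$, and therefore $F(M - M') \in \im(M - M') \subset V_0$ by range-compatibility, which forces $F(M)_i = F(M')_i$. I do not foresee any real obstacle: the statement is essentially the Projection Lemma rewritten in coordinates and specialized to the hyperplane case, and the entire argument was already carried out when proving that lemma.
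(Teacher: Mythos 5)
Your proposal is correct and follows exactly the paper's intent: the paper states this lemma as the matrix reformulation of the Projection Lemma in the special case where $V_0$ is a (coordinate) hyperplane, with no separate argument beyond the identification of $\calS \modu V_0$ with $L_i(\calS)$ and of $V/V_0$ with $\K$ that you spell out. The well-definedness check you include is precisely the content of the Projection Lemma's proof.
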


\subsection{A proof of the First Classification Theorem}\label{classtheo1proof}

To give a flavor of the power of the above tools, we immediately use them to
prove Theorem \ref{classtheo1}.
This proof works by induction on $\dim V$ (compare this with the one in \cite{dSPclass}, which
uses an induction on $\dim U$ instead).
For $\dim V=1$, the result is void. Assume now that $\dim V \geq 2$.
Let $z \in V \setminus \{0\}$.
Using the rank theorem, we see that
\begin{equation}\label{equationcodimension}
\codim(\calS \modu z)=\codim \calS+\dim \bigl\{s \in \calS : \; \im s \subset \K z\bigr\}-\dim U.
\end{equation}
As $\codim \calS\leq \dim V-2$, there are two options:
\begin{itemize}
\item Either $\codim (\calS \modu z) \leq \dim V -3$;
\item Or $\dim \bigl\{s \in \calS : \; \im s \subset \K z\bigr\}=\dim U$,
which yields that $\calS$ contains all the linear maps from $U$ to $\K z$.
\end{itemize}
Let us now split the discussion into two cases:

\noindent \textbf{Case 1.} \\
Assume that one cannot find two linearly independent vectors $y_1$ and $y_2$ in $V$
such that $\codim (\calS \modu y_1) \leq \dim V-3$ and $\codim (\calS \modu y_2) \leq \dim V-3$.
Then, we can find a basis $(z_1,\dots,z_n)$ of $V$ in which all the vectors satisfy
$\codim (\calS \modu z_i)\geq \dim V-2$. Then, for all $i \in \lcro 1,n\rcro$,
the space $\calS$ contains all the linear operators from $U$ to $\K z_i$, whence summing
them shows that $\calS=\calL(U,V)$. We deduce from Proposition \ref{espacetotal} that every range-compatible
homomorphism on $\calS$ is local.

\noindent \textbf{Case 2.} \\
Assume that there are two linearly independent vectors $y_1$ and $y_2$ in $V$
such that $\codim (\calS \modu y_1) \leq \dim V-3$ and $\codim (\calS \modu y_2) \leq \dim V-3$.
Then, by induction we obtain that $F \modu y_1$ and $F \modu y_2$ are local, yielding
vectors $x_1$ and $x_2$ in $U$ such that $F(s)=s(x_1) \mod \K y_1$ and $F(s)=s(x_2) \mod \K y_2$
for all $s \in \calS$. If $x_1=x_2$, then
$F(s)-s(x_1) \in \K y_1 \cap \K y_2=\{0\}$ for all $s \in \calS$, whence $F$ is the evaluation at $x_1$
and we are done.

Assume finally that $x_1 \neq x_2$.
Then, we find $s(x_1-x_2)=F(s)-F(s) \mod \Vect(y_1,y_2)$ for all $s \in \calS$.
Thus, the non-zero vector $x:=x_1-x_2$ satisfies $\calS x \subset \Vect(y_1,y_2)$.
However, the space of all linear maps $s : U \rightarrow V$ satisfying $s(x) \in \Vect(y_1,y_2)$
has obviously codimension $\dim V-2$ in $\calL(U,V)$, whence it \emph{equals} $\calS$.
Thus, in well-chosen bases of $U$ and $V$, we see that $\calS$ is represented by the matrix space
$\calD \coprod \Mat_{n,p-1}(\K)$, where $\calD=\K^2 \times \{0\} \subset \K^n$ (with $n:=\dim V$ and $p:=\dim U$).
By Lemma \ref{dimU=1}, every range-compatible homomorphism on $\calD$ is local, and the same holds for
$\Mat_{n,p-1}(\K)$ by Proposition \ref{espacetotal}. Using Lemma \ref{splittinglemma}, one concludes that every range-compatible homomorphism on $\calS$
is local.

Thus, Theorem \ref{classtheo1} is proved by induction on $\dim V$.

We shall now adapt the above arguments so as to obtain the following
slight generalization of the First Classification Theorem (which will be useful in the last section of the article).

\begin{theo}[Generalized first classification theorem]\label{generalizedfirsttheo}
Let $U$ and $V$ be finite-dimensional vector spaces.
Let $\calT$ be a \emph{subgroup} of $\calL(U,V)$ which contains a linear subspace
with codimension at most $\dim V-2$, and let $F : \calT \rightarrow V$ be a range-compatible homomorphism.
Then, $F$ is local.
\end{theo}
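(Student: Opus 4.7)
The plan is to prove the result by induction on $\dim V$, adapting the argument used for Theorem \ref{classtheo1} with the subgroup $\calT$ in place of $\calS$ at the relevant steps. The cases $\dim V\leq 1$ are vacuous, since no linear subspace of $\calL(U,V)$ has codimension $\leq -1$, so I would assume $\dim V\geq 2$. First I would observe that the Projection Lemma applies verbatim when $\calT$ is only assumed to be a subgroup: the factorization of the homomorphism $F$ through $\pi:V\twoheadrightarrow V/V_0$ only needs the implication $\pi\circ s=0\Rightarrow \im s\subset V_0\Rightarrow \pi(F(s))=0$, which is valid for any $s\in\calT$. Hence for each non-zero $z\in V$ I obtain a range-compatible homomorphism $F\modu z:\calT\modu z\to V/\K z$, where $\calT\modu z$ is a subgroup of $\calL(U,V/\K z)$ containing the linear subspace $\calS\modu z$.

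The key dichotomy is obtained by applying the codimension formula \eqref{equationcodimension} to the linear subspace $\calS$: for every non-zero $z\in V$, either $\codim(\calS\modu z)\leq\dim V-3$, or $\calS$ contains every linear map with image in $\K z$. If no pair of linearly independent vectors falls in the first alternative, then some basis $(z_1,\dots,z_n)$ of $V$ satisfies the second alternative at each $z_i$; summing over $i$ yields $\calS=\calL(U,V)$, hence $\calT=\calL(U,V)$, and Proposition \ref{espacetotal} finishes the argument since $\dim V\geq 2$.

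Otherwise, I pick linearly independent vectors $y_1,y_2\in V$ with $\codim(\calS\modu y_i)\leq\dim V-3$ for $i=1,2$. The inductive hypothesis (in its generalized form) then applies to $F\modu y_i$ on the subgroup $\calT\modu y_i$, yielding vectors $x_1,x_2\in U$ with $F(t)-t(x_i)\in\K y_i$ for every $t\in\calT$. If $x_1=x_2$, then $F(t)-t(x_1)\in\K y_1\cap\K y_2=\{0\}$ and $F$ is local. Otherwise, setting $x:=x_1-x_2\neq 0$, every $t\in\calT$ satisfies $t(x)\in\Vect(y_1,y_2)$, so $\calT$ is contained in the codimension-$(\dim V-2)$ linear subspace $\calE:=\{u\in\calL(U,V):u(x)\in\Vect(y_1,y_2)\}$. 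Combined with $\calS\subset\calT\subset\calE$ and $\codim\calS\leq\dim V-2=\codim\calE$, this forces $\calT=\calS=\calE$, and I conclude by applying Theorem \ref{classtheo1} directly to the linear subspace $\calT$.

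The only mildly delicate point I anticipate is the need to formulate the statement for subgroups from the outset: applying the original Theorem \ref{classtheo1} to $F\modu y_i$ would require $\calT\modu y_i$ to be a linear subspace, which need not hold. Verifying that the Projection Lemma still operates in the subgroup setting is therefore the essential ingredient that closes the induction.
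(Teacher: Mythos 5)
Your proof is correct and follows the paper's strategy almost exactly: the same induction on $\dim V$, the same dichotomy driven by formula \eqref{equationcodimension}, and the same observation that the Projection Lemma carries over verbatim to subgroups. The one place where you genuinely diverge is the endgame of the second case. The paper, having found $\calS=\calD\coprod\Mat_{n,p-1}(\K)$, writes $\calT=H\coprod\Mat_{n,p-1}(\K)$ for a subgroup $H\supset\calD$ of $\K^n$ and then invokes subgroup versions of Lemma \ref{carachom} and of the Splitting Lemma. You instead notice that the inclusion $t(x_1-x_2)\in\Vect(y_1,y_2)$ holds for \emph{all} $t\in\calT$ (not just $t\in\calS$), so that $\calS\subset\calT\subset\calE$ with $\codim\calS\leq\dim V-2=\codim\calE$ forces $\calT=\calS=\calE$ to be a linear subspace, after which Theorem \ref{classtheo1} applies as a black box. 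This is a small but real simplification: it dispenses with the need to extend Lemma \ref{carachom} and the Splitting Lemma to subgroups, leaving the subgroup version of the Projection Lemma as the only new ingredient, exactly as you flag at the end.
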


To see how to adapt the above proof, we note first that
the line of reasoning from the proof of Lemma \ref{carachom} actually shows that, given a
subgroup $H$ of a vector space $V$ such that $\Vect(H)$ has dimension greater than $1$,
every homomorphism $u : H \rightarrow V$ which satisfies $\forall x \in H, \; u(x) \in \K x$
is a scalar multiple of the identity. Moreover, statements (a), (c) and (d) of the Splitting Lemma
can be easily extended to subgroups of matrices instead of linear subspaces.
Similarly, the projection technique still works in the context of subgroups of linear operators.
Finally, to make the inductive step work, one considers a linear subspace $\calS$ which is included in
$\calT$ and such that $\codim \calS \leq \dim V-2$, and one discusses whether
there are two linearly independent vectors $y_1$ and $y_2$ of $V$ such that
$\codim (\calS \modu y_1) \leq \dim V-3$ and $\codim (\calS \modu y_2) \leq \dim V-3$.
If there are no two such vectors, one finds $\calS=\calL(U,V)$ whence $\calT=\calS=\calL(U,V)$ and we are done. \\
If two such vectors exist and $F$ is non-local, one finds that, in well-chosen bases of $U$ and $V$,
$\calS$ is represented by $\calD \coprod \Mat_{n,p-1}(\K)$, where $n:=\dim V$, $p:=\dim U$, and
$\calD$ is a $2$-dimensional subspace of $\K^n$; then, in those bases, $\calT$ is represented by $H \coprod \Mat_{n,p-1}(\K)$
for some subgroup $H$ of $\K^n$ that contains $\calD$, and the above generalization of Lemma \ref{carachom}
shows that every range-compatible homomorphism on $H$ is local; as the same holds for $\Mat_{n,p-1}(\K)$,
one concludes that $F$ is local.

\subsection{Application to spaces of Type 1}

Now that the First Classification Theorem has been proved, we can move forward and use that theorem
to examine the range-compatible homomorphisms on spaces of Type 1:

\begin{prop}\label{vecteurdim1}
Let $U$ and $V$ be finite-dimensional vector spaces.
Let $\calS$ be a linear subspace of $\calL(U,V)$ with $\codim \calS \leq 2 \dim V-3$.
Assume that there is a non-zero vector $x$ in $U$ such that $\dim \calS x \leq 1$. \\
Let $F$ be a range-compatible homomorphism on $\calS$.
\begin{enumerate}[(a)]
\item If $\calS x=\{0\}$, then $F$ is local.
\item If $\calS x \neq \{0\}$, then $F$ is the sum of a local map and of
$s \mapsto \alpha(s(x))$ for some endomorphism $\alpha$ of $(\calS x,+)$.
\item Whenever $F$ is linear, it is local.
\end{enumerate}
\end{prop}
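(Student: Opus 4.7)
The plan is to derive everything from three applications of the First Classification Theorem (Theorem \ref{classtheo1}), together with a dimension-counting lemma on $\calT := \{s \in \calS : s(x) = 0\}$.

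For part (a), the hypothesis $\calS x = \{0\}$ forces $\calS \subseteq \calL(U/\K x, V)$, and a direct codimension count gives $\codim_{\calL(U/\K x, V)} \calS = \codim \calS - \dim V \leq \dim V - 3$. Theorem \ref{classtheo1} applied inside $\calL(U/\K x, V)$ then yields that $F$ is local (the case $\dim V \leq 2$ is vacuous, since $\calS x = \{0\}$ already forces $\codim \calS \geq \dim V$).

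For part (b), fix $y_0$ spanning $\calS x$ and any $s_0 \in \calS$ with $s_0(x) = y_0$, so that $\calS = \K s_0 \oplus \calT$. The same rank computation gives $\codim_{\calL(U/\K x, V)} \calT = \codim \calS + 1 - \dim V \leq \dim V - 2$, so Theorem \ref{classtheo1} provides $z_2 \in U$ with $F(s') = s'(z_2)$ for every $s' \in \calT$. Let $F^* := F - (s \mapsto s(z_2))$; this is a range-compatible homomorphism vanishing on $\calT$. Writing $\varphi(s) := s(x)/y_0 \in \K$, any homomorphism $\calS \to \K y_0$ vanishing on $\calT = \ker \varphi$ factors as $s \mapsto \tilde\alpha(\varphi(s))\,y_0$ for some group homomorphism $\tilde\alpha : \K \to \K$; hence it suffices to show $F^*(\calS) \subseteq \K y_0$, the desired $\alpha$ being given by $\alpha(\mu y_0) := \tilde\alpha(\mu)\,y_0$.

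To prove this final claim, split on $\dim \calS_{y_0}$ where $\calS_{y_0} := \calS \cap \calL(U, \K y_0)$. When $\dim \calS_{y_0} = \dim U$, the space $\calL(U, \K y_0)$ sits inside $\calS$, so I may take $s_0 = y_0 \otimes \phi_x$ with $\phi_x(x) = 1$; then $\im(\lambda s_0) = \K y_0$ and range-compatibility forces $F^*(\lambda s_0) \in \K y_0$ directly (this change of $s_0$ is legitimate since $F^*$ vanishes on $\calT$). When $\dim \calS_{y_0} \leq \dim U - 1$, a direct computation yields $\codim_{\calL(U/\K x, V/\K y_0)}(\calS \modu y_0) \leq \dim V - 3 = \dim(V/\K y_0) - 2$, so Theorem \ref{classtheo1} applied to $F \modu y_0$ produces $\bar z_1 \in U/\K x$ with $\pi \circ F(s) = \pi(s(z_1))$ for all $s \in \calS$, where $\pi : V \twoheadrightarrow V/\K y_0$ is the canonical projection. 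The main obstacle is to reconcile $\bar z_1$ with $\bar z_2$: evaluating both formulas on $s' \in \calT$ shows that $\overline{z_1 - z_2} \in K := \{\bar u \in U/\K x : s'(u) \in \K y_0 \text{ for all } s' \in \calT\}$, and the key lemma is that $K$ is trivial. Indeed, for any $\bar u \neq 0$ the subspace $\{s' \in \calL(U/\K x, V) : s'(u) \in \K y_0\}$ has codimension $\dim V - 1 > \dim V - 2 \geq \codim \calT$, so $\calT$ cannot be contained in it; hence $\bar u \notin K$. This forces $z_1 \equiv z_2 \pmod{\K x}$, whence $\pi(F^*(s)) = \bar s(\bar z_1 - \bar z_2) = 0$ and $F^*(s) \in \K y_0$ for every $s \in \calS$. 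Part (c) follows at once: if $F$ is linear then so is $\alpha$, so $\alpha(s(x)) = s(kx)$ for some $k \in \K$, and $F$ is the evaluation at $kx + z_2$.
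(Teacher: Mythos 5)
Your proof is correct; the codimension counts all check out ($\codim_{\calL(U/\K x,V)}\calS=\codim\calS-\dim V\leq\dim V-3$ in (a), $\codim_{\calL(U/\K x,V)}\calT\leq\dim V-2$, and $\codim_{\calL(U/\K x,V/\K y_0)}(\calS\modu y_0)\leq\dim V-3$ when $\dim\calS_{y_0}\leq\dim U-1$), and the reduction of (b) to the single claim $F^*(\calS)\subseteq\K y_0$ via factorization through $\varphi$ is sound.

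Your route is, however, genuinely different in its architecture from the paper's. The paper works in matrix form with $x=e_1$, $\calS x\subset\K e_1$, and performs the two reductions in the opposite order: it first projects modulo $y_0=e_1$ (using the lower-right block of the matrices) to conclude that $F\modu e_1$ is local, subtracts a local map so that $F(\calS)\subset\K e_1$, and only then proves that $F$ vanishes on $\calT=\{s\in\calS:\,s(x)=0\}$ --- by a reductio: a non-zero such $F$ restricted to $\calT$ would be local by Theorem \ref{classtheo1} with a non-zero evaluation vector, forcing $\codim\calT\geq\dim V-1$ and contradicting $\codim_{\calL(U/\K x,V)}\calT\leq\dim V-2$. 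You instead normalize $F$ to vanish on $\calT$ \emph{first} (by applying Theorem \ref{classtheo1} to $\calT$ viewed inside $\calL(U/\K x,V)$) and then show the remainder lands in $\K y_0$. Your case split (whether $\calL(U,\K y_0)\subseteq\calS$) also differs from the paper's (whether $\calS$ contains a non-zero operator supported on $\K x$, i.e.\ whether $\calS$ splits off a $\K E_{1,1}$ factor). The two key codimension arguments are cousins --- both exploit that containing $\calT$ in a codimension-$(\dim V-1)$ subspace would contradict $\codim\calT\leq\dim V-2$ --- but yours is packaged as the triviality of the set of vectors mapped into $\K y_0$ by all of $\calT$, which is exactly what you need to reconcile the two evaluation vectors $z_1$ and $z_2$. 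What your version buys is a coordinate-free argument that never invokes the Splitting Lemma and treats the two subcases more symmetrically; what the paper's version buys is that everything happens inside one fixed matrix normalization, which is the form in which the proposition is reused later.
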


\begin{proof}
We lose no generality in assuming that $U=\K^p$, $V=\K^n$, $x$ is the first vector of the standard
basis of $\K^p$, and $\calS x \subset \K \times \{0\}$.
We see $\calS$ as a linear subspace of $\Mat_{n,p}(\K)$, and we split every
$M \in \calS$ as $$M=\begin{bmatrix}
C(M) & K(M)
\end{bmatrix} \quad \text{with $C(M) \in \K^n$ and $K(M) \in \Mat_{n,p-1}(\K)$.}$$
Denote by $e_1$ the first vector of the standard basis of $\K^n$.
Then, in Case (b), we have to show that $F$ is the sum of a local map and of $M \mapsto \alpha(m_{1,1})\,e_1$
for some endomorphism $\alpha$ of the group $(\K,+)$.

\noindent \textbf{Case 1.} $C(\calS)=\{0\}$. \\
Then, $\calS=\{0\} \coprod K(\calS)$.
Note that $\codim K(\calS)=\codim \calS-n \leq n-3$,
whence Theorem \ref{classtheo1} yields that every range-compatible homomorphism on $K(\calS)$ is local.
It follows from the Splitting Lemma that $F$ is local.

\noindent \textbf{Case 2.} $C(\calS)\neq \{0\}$. \\
\noindent \textbf{Subcase 2.1.} $\dim K(\calS)<\dim \calS$. \\
Then, $\dim K(\calS)=\dim \calS-1$ and
$\calS=(\K e_1) \coprod K(\calS)$. We split $F=f \coprod g$, where $f$ and $g$ are range-compatible homomorphisms, respectively, on
$\K e_1$ and on $K(\calS)$. Again, $\codim K(\calS) \leq n-2$, whence $g$ is local.
On the other hand, it is obvious that $f : M \mapsto \alpha(m_{1,1})\,e_1$ for some endomorphism $\alpha$ of $(\K,+)$.
Writing $F=(f \coprod 0)+(0 \coprod g)$ and noting that $0 \coprod g$ is local, we conclude that $F$ has the claimed form.

\noindent \textbf{Subcase 2.2.} $\dim K(\calS)=\dim \calS$. \\
For every $M \in \calS$, we further split
$$K(M)=\begin{bmatrix}
[?]_{1 \times (p-1)} \\
J(M)
\end{bmatrix},$$
so that $J(\calS)$ is a linear subspace of $\Mat_{n-1,p-1}(\K)$ with
$\codim J(\calS) \leq \codim K(\calS) \leq n-3=(n-1)-2$.
The First Classification Theorem yields that every range-compatible homomorphism on $J(\calS)$ is local.
Noting that $\calS \modu e_1$ is represented by the matrix space $\{0\} \coprod J(\calS)$,
we deduce from the Splitting Lemma that $F \modu e_1$ is local.
Thus, by subtracting a well-chosen local map from $F$, we can assume that $F \modu e_1=0$.
In other words, $F(\calS) \subset \K e_1$.
To conclude, it suffices to prove that $F$ is a group homomorphism of the entry of the matrices of $\calS$
at the $(1,1)$-spot, which amounts to proving that $F$ vanishes at every matrix of $\calH:=\Ker C$.
By a \emph{reductio ad absurdum}, we assume that this is not the case.
As $\calH=\{0\} \coprod K(\calH)$ and
$\codim K(\calH)=\codim \calH -n=\codim \calS-n+1 \leq n-2$, another application of Theorem \ref{classtheo1} yields that
the restriction of $F$ to $\calH$ is local. Thus, we have a vector $x \in \K^{n-1}$, necessarily non-zero,
such that $K(N)x \in \K e_1$ for all $N \in \calH$, whence $\codim K(\calH) \geq n-1$, contradicting what we have
just seen.
Therefore, $F$ vanishes everywhere on $\calH$, yielding a group endomorphism $\alpha$ of $(\K,+)$ such that
$F(M)=\alpha(m_{1,1})\,e_1$ for all $M \in \calS$. This completes the proof of statement (b).

Assume finally that $F$ is linear. In Case 1, we already know that $F$ is local.
In Case 2, we have shown that $F$ is the sum of a local map and of $M \mapsto \alpha(m_{1,1})\,e_1$ for some endomorphism $\alpha$ of $(\K,+)$.
As every local map is linear, we deduce that $\alpha$ is linear, which yields a scalar $\lambda$ such that $\alpha : x \mapsto \lambda x$;
thus, $M \mapsto \alpha(m_{1,1})\,e_1=M\times (\lambda e_1)$ is local, and hence $F$, being the sum of two local maps, is local.
This proves statement (c).
\end{proof}

\section{Range-compatible homomorphisms on full spaces of symmetric matrices}\label{symmetricsection}

In this section, we give a complete classification of the range-compatible homomorphisms on
the space of all $n \times n$ symmetric matrices over $\K$, i.e.\ we prove Theorem \ref{symmetrictheo}.
Let us recall its statement, with a small addition.

\begin{theo}\label{symtheorepeated}
Assume that $n \geq 2$.
If $\K$ has characteristic not $2$, then every
range-compatible homomorphism on $\Mats_n(\K)$ is local. \\
If $\K$ has characteristic $2$, then the group of all range-compatible homomorphisms on $\Mats_n(\K)$
is generated by the local maps together with the maps of the form
$$M \longmapsto \begin{bmatrix}
\alpha(m_{1,1}) & \alpha(m_{2,2}) & \cdots & \alpha(m_{n,n})\end{bmatrix}^T,$$
where $\alpha$ is a root-linear form on $\K$.

If $\K$ has more than $2$ elements, then every linear range-compatible map on $\Mats_n(\K)$ is local.
\end{theo}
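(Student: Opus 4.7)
The plan is to prove the theorem by induction on $n \geq 2$, combining the projection technique of Section~\ref{projectiontechnique} with the Splitting Lemma. The base case $n=2$ is handled by directly solving the functional equation arising from range-compatibility on rank-one symmetric matrices; the inductive step reduces the analysis modulo $e_n$ and then controls a residual ``last-row'' homomorphism.

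For $n=2$, any group homomorphism $F : \Mats_2(\K) \to \K^2$ has the form $F(M) = (A_1(a)+B_1(d),\ A_2(d)+B_2(b))^\top$ for $M = \begin{bmatrix} a & d \\ d & b\end{bmatrix}$, where $A_1, B_1, A_2, B_2 : \K \to \K$ are group homomorphisms. Applying range-compatibility to the rank-one matrices $\lambda vv^\top$ with $v = (s,t)^\top$ yields
$$t\bigl[A_1(\lambda s^2)+B_1(\lambda st)\bigr] = s\bigl[A_2(\lambda st)+B_2(\lambda t^2)\bigr].$$
Setting $s = t$ gives $A_1+B_1 = A_2+B_2$; setting $s=1$ isolates $A_2$ in terms of the others. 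From these, I extract the key identity $B_2(d^2) = d\,B_1(d)$---in characteristic not $2$ by the symmetry $d \leftrightarrow -d$, and in characteristic $2$ with $|\K|\geq 3$ by a change of variable $e := d^2/a$ and the observation that an additive function of $e$ plus a constant can vanish on $\K \setminus \{0\}$ only if both the additive and constant parts vanish. Additivity of $B_2$ applied to $(d+e)^2$ then yields $e\,B_1(d) = d\,B_1(e)$, so $B_1$ is $\K$-linear; substituting back shows $A_2$ is also $\K$-linear, and $\alpha := A_1 - A_1(1)\,\mathrm{id} = B_2 - B_1(1)\,\mathrm{id}$ satisfies $\alpha(\lambda s^2) = s\,\alpha(\lambda)$, which is precisely the root-linearity condition. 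The remaining case $\K = \F_2$ is handled by enumerating the eight range-compatible homomorphisms directly.

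For the inductive step with $n\geq 3$, the projection $F\bmod e_n$ is range-compatible on $\Mats_n(\K)\bmod e_n = \Mats_{n-1}(\K) \coprod \K^{n-1}$; the Splitting Lemma, the inductive hypothesis on $\Mats_{n-1}(\K)$, and Lemma~\ref{carachom} (applicable since $\dim V = n-1 \geq 2$) together imply that $F\bmod e_n$ agrees with the projection of a sum $L_x + \Delta_\alpha$ of a local map $L_x$ and a diagonal-entry map $\Delta_\alpha$ associated to some $x \in \K^n$ and some root-linear $\alpha$. Setting $F' := F - L_x - \Delta_\alpha$, one then has $F'(M) \in \K e_n$ for every $M \in \Mats_n(\K)$, so $F'(M) = \phi(M)\,e_n$ for some group homomorphism $\phi : \Mats_n(\K) \to \K$, which decomposes as $\phi(M) = \sum_{i\leq j} c_{i,j}(m_{i,j})$ with $c_{i,j} : \K \to \K$ group homomorphisms. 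Range-compatibility on $\lambda E_{i,i}$ (for $i<n$) and on $\lambda(E_{i,j}+E_{j,i})$ (for $i<j<n$) immediately forces the corresponding coefficients to vanish. The remaining constraint, obtained from the rank-one matrices $\lambda(e_i+\mu e_n)(e_i+\mu e_n)^\top$, is
$$c_{i,n}(\lambda\mu) + c_{n,n}(\lambda\mu^2) = 0 \quad \text{for all } \lambda,\mu\in\K \text{ and } i<n.$$
Setting $\mu=1$ gives $c_{i,n} = \mp c_{n,n}$, reducing the relation to $c_{n,n}(\lambda\mu(1\mp\mu)) = 0$; for $|\K|\geq 3$, choosing $\mu \in \K\setminus\{0,1\}$ makes $\mu(1\mp\mu)$ a nonzero element of $\K$, whence $\lambda\mu(1\mp\mu)$ sweeps all of $\K$ and $c_{n,n}=0$ follows. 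The field $\K = \F_2$ is treated by exhibiting a concrete symmetric matrix $M$ with $\phi(M)\neq 0$ but $e_n \notin \im M$, contradicting range-compatibility unless $c_{n,n} = 0$.

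The main obstacle I anticipate is the derivation of $B_2(d^2) = d\,B_1(d)$ in characteristic $2$ at the base case, where the straightforward antisymmetry argument is unavailable and one must rely on the subtler ``additive plus constant vanishing on $\K^\ast$'' argument, together with the recurring need to specially dispatch $\K = \F_2$. For the final assertion on linear maps, once $F = L_x + \Delta_\alpha$ with $\alpha$ root-linear, linearity of $F$ is equivalent to linearity of $\alpha$; but $\K$-linearity combined with root-linearity forces $(\lambda^2-\lambda)\alpha(x) = 0$ for all $\lambda,x\in\K$, which yields $\alpha = 0$ as soon as $|\K|>2$, so $F$ is local.
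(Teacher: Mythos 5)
Your classification argument (the inclusion ``every range-compatible homomorphism on $\Mats_n(\K)$ is a sum of a local map and a diagonal root-linear map'') is essentially sound and takes a genuinely different route from the paper's: you induct on $n$ via the projection technique, using $\Mats_n(\K) \modu e_n = \Mats_{n-1}(\K) \coprod \Mat_{n-1,1}(\K)$, the Splitting Lemma and Lemma \ref{carachom}, and then kill the residual last-row homomorphism $\phi$ with a few rank-one tests. The paper instead stays inside $\Mats_n(\K)$, restricting $F$ to the principal $2 \times 2$ blocks and to the matrices $x(E_{i,j}+E_{j,i})$, assembling a local correction $M \mapsto MY$ from the off-diagonal data and only then treating the diagonal. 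Both reduce to the same $2\times 2$ functional equation at the base case; your inductive step is arguably leaner, and your handling of $c_{i,n}$, $c_{n,n}$ via the vectors $e_i+\mu e_n$ checks out (the needed fact that the subtracted diagonal map sends $\lambda vv^T$ into $\K v$ follows from root-linearity).

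There is, however, a genuine gap: you never prove that, in characteristic $2$, the maps $M \mapsto \begin{bmatrix} \alpha(m_{1,1}) & \cdots & \alpha(m_{n,n}) \end{bmatrix}^T$ with $\alpha$ root-linear \emph{are} range-compatible on all of $\Mats_n(\K)$. The theorem asserts an equality of groups, and your induction only yields one inclusion; the reverse inclusion is precisely the range-compatibility of these diagonal maps, without which the group they generate would not consist of range-compatible homomorphisms. This is not a formality: your tests only verify $\alpha$-compatibility on symmetric matrices of rank at most $1$, and passing to arbitrary $M$ requires the non-trivial fact (used in Section \ref{specialRCrootlinear} of the paper) that over a field of characteristic $2$ a symmetric matrix with a nonzero diagonal entry is congruent to a diagonal matrix, so that $M$ decomposes as a sum of rank-one symmetric matrices whose images sum to $\im M$; the case where the whole diagonal vanishes is trivial since the map then sends $M$ to $0$. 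You must add this step. (The two $\F_2$ side-cases you defer are fixable --- e.g.\ for the inductive step the matrix $E_{1,2}+E_{2,1}+E_{1,3}+E_{3,1}$, suitably embedded, has $e_n \notin \im M$ and zero diagonal while its $(1,n)$-entry is $1$ --- but they should be written out.)
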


Only the case $n \leq 3$ will be needed in the proof of Theorem \ref{classtheo2} but the generalization to larger values is virtually costless.
We split the proof into two main parts. It is shown in Section \ref{specialRCrootlinear} that, if $\K$ has characteristic $2$,
then the homomorphism $M \in \Mats_n(\K) \mapsto \begin{bmatrix}
\alpha(m_{1,1}) & \alpha(m_{2,2}) & \cdots & \alpha(m_{n,n})\end{bmatrix}^T$
is range-compatible for all root-linear forms $\alpha$ on $\K$. In the second part of the proof,
we shall analyze the range-compatible homomorphisms on $\Mats_n(\K)$:
this will be done by analyzing first the case $n=2$ (Section \ref{symmetricn=2}), and then by extending the result to larger values
(Section \ref{symmetricn>2}). The last statement is quickly proved in Section \ref{linearsym} with the help of a basic lemma on root-linear forms.

\subsection{Special range-compatible homomorphisms on $\Mats_n(\K)$}\label{specialRCrootlinear}

Let $n \geq 2$ be an integer, and assume that $\K$ has characteristic $2$.
Consider an arbitrary root-linear form $\alpha$ on $\K$.
We have to prove that the homomorphism
$$F : M \in \Mats_n(\K) \mapsto \begin{bmatrix}
\alpha(m_{1,1}) & \alpha(m_{2,2}) & \cdots & \alpha(m_{n,n})
\end{bmatrix}^T$$
is range-compatible.
There are two steps. First of all, we show that $F$ is range-compatible on matrices of rank at most $1$.
Then, we extend the property to every symmetric matrix.

Let $M \in \Mats_n(\K)$ be of rank $1$. Then, $M=aX X^T$ for some
$a \in \K^*$ and some $X=\begin{bmatrix}
x_1 & \cdots & x_n
\end{bmatrix}^T \in \K^n$. In particular, $m_{i,i}=a x_i^2$ for all $i \in \lcro 1,n\rcro$, whence
$$F(M)=\alpha(a) X \in \im M.$$
Thus, $F(M) \in \im M$ whenever $M \in \Mats_n(\K)$ has rank at most $1$.

Now, let $M \in \Mats_n(\K)$ be an arbitrary symmetric matrix.
If every diagonal entry of $M$ is zero, then $F(M)=0 \in \im M$.
Assume now that some diagonal entry of $M$ is non-zero. Then, by Theorem 3.0.13 of \cite[Chapter XXXV]{invitquad},
there is a diagonal matrix $D \in \Mat_n(\K)$ and a non-singular matrix $P \in \GL_n(\K)$ such that
$M=PDP^T$. Writing $D=\Diag(d_1,\dots,d_n)$, we split $D:=D_1+\cdots+D_n$ where, for $i \in \lcro 1,n\rcro$,
we have set $D_i=d_i\,E_{i,i}$.

Setting $M_i:=P D_i P^T$, we deduce that $M=M_1+\cdots+M_n$, that $\im (M)=\im(M_1)+\cdots+\im(M_n)$ and that
each matrix $M_i$ has rank at most $1$. Thus,
$$F(M)=\sum_{i=1}^n F(M_i) \in \sum_{i=1}^n \im (M_i)=\im M.$$
This completes the proof.

\begin{Rem}
Assume that $\K$ is perfect, i.e. the Frobenius endomorphism
$x \mapsto x^2$ is surjective. Then, we can give a shorter proof. In that case indeed,
$\alpha$ is a scalar multiple of the inverse $x \mapsto \sqrt{x}$ of the Frobenius automorphism of $\K$.
Then, it suffices to prove that the map
$$\varphi : M \in \Mats_n(\K) \mapsto
\begin{bmatrix}
\sqrt{m_{1,1}} & \cdots & \sqrt{m_{n,n}}
\end{bmatrix}^T$$ is range-compatible.
Let $M \in \Mats_n(\K)$. For every $X \in \Ker M$, we have
$X^TMX=0$, which reads $\underset{i=1}{\overset{n}{\sum}} x_i^2 m_{i,i}=0$, and applying $\sqrt{-}$ yields
$$\sum_{i=1}^n x_i \sqrt{m_{i,i}}=0.$$
In other words, the vector $\varphi(M)$ belongs to the orthogonal subspace of $\Ker M$, which equals $\im M$ since $M$ is symmetric.
Thus, $\varphi$ is range-compatible, as claimed.
\end{Rem}

\subsection{Analyzing the range-compatible homomorphisms on $\Mats_2(\K)$}\label{symmetricn=2}

Let $F : \Mats_2(\K) \rightarrow \K^2$ be a range-compatible homomorphism.
Using the projection technique on both rows, we obtain endomorphisms
$f,g,u,v$ of the group $\K$ such that
$$F : \begin{bmatrix}
a & b \\
b & c
\end{bmatrix} \longmapsto \begin{bmatrix}
f(a)+u(b) \\
g(c)+v(b)
\end{bmatrix}.$$
Note that analyzing the range-compatibility property only requires that we look at the images of rank $1$ matrices.
Let $x \in \K^*$ and $t \in \K$.
Then, $M:=x\begin{bmatrix}
1 & t \\
t & t^2
\end{bmatrix}$ has rank $1$ and its image is the span of $\begin{bmatrix}
1 \\
t
\end{bmatrix}$. This yields the identity
$$g(t^2x)+v(t x)=t\, f(x)+t\, u(t x).$$
Note that this identity is obvious for $x=0$, whence
\begin{equation}\label{basicsymidentity}
\forall (x,t) \in \K^2, \; g(t^2x)+v(t x)=t f(x)+t u(t x).
\end{equation}

\begin{lemma}\label{2by2identity}
Let $f,g,u,v$ be endomorphisms of $(\K,+)$ that satisfy \eqref{basicsymidentity}.
Then:
\begin{enumerate}[(a)]
\item Either $\K$ has characteristic not $2$ and then there are scalars
$\lambda$ and $\mu$ such that $f : x \mapsto \lambda\, x$, $u : x \mapsto \mu\, x$,
$v : x \mapsto \lambda\, x$ and $g : x \mapsto \mu\, x$;

\item Or $\K$ has characteristic $2$ and then there are scalars $\lambda$ and $\mu$,
together with a root-linear form $\alpha$ on $\K$ such that
$f : x \mapsto \lambda\, x+\alpha(x)$, $u : x \mapsto \mu\, x$,
$v : x \mapsto \lambda\, x$ and $g : x \mapsto \mu\, x+\alpha(x)$.
\end{enumerate}
\end{lemma}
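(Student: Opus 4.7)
The plan is to treat the two characteristic cases separately, since \eqref{basicsymidentity} behaves very differently in each.

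In characteristic different from $2$, I would exploit the sign symmetry. Replacing $t$ by $-t$ in \eqref{basicsymidentity} and using additivity of the four maps (so that $(-t)^2=t^2$ while $v(-tx)=-v(tx)$ and $u(-tx)=-u(tx)$) produces
\[ g(t^2 x) - v(tx) = -tf(x) + tu(tx). \]
Adding and subtracting this with \eqref{basicsymidentity} decouples the unknowns into $g(t^2 x) = tu(tx)$ and $v(tx) = tf(x)$. Specializing at $t=1$ gives $g=u$ and $v=f$; the first identity then reduces to $g(ty)=tg(y)$ (valid for $t\neq 0$ via the substitution $y=tx$, and trivially for $t=0$) and the second to $v(tx)=tv(x)$, so both $g$ and $v$ are $\K$-linear. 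Setting $\lambda:=v(1)$ and $\mu:=g(1)$ closes this case.

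In characteristic $2$, I would first derive from \eqref{basicsymidentity} a companion identity
\[ f(t^2 x) + u(tx) = tg(x) + tv(tx) \qquad (x,t \in \K), \]
call it (II). For $t \neq 0$ this follows from \eqref{basicsymidentity} by the substitutions $t \mapsto 1/t$ and $x \mapsto t^2 x$ followed by multiplication by $t$; the case $t=0$ is trivial. In matrix terms, (II) corresponds to parametrizing rank-one symmetric matrices by their bottom-right entry rather than their top-left one. Next I apply $t \mapsto t+1$ to \eqref{basicsymidentity}, using the key identity $(t+1)^2 = t^2+1$ in characteristic $2$. Expanding everything via additivity, subtracting \eqref{basicsymidentity}, and substituting its $t=1$ specialization $g+v=f+u$ to cancel the leading terms isolates $u(tx) = tu(x)$, so $u$ is linear. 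The same maneuver applied to (II) yields $v$ linear. Write $\mu := u(1)$ and $\lambda := v(1)$.

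Plugging $u = \mu\,\id$ and $v = \lambda\,\id$ back into \eqref{basicsymidentity} gives $g(t^2 x) - \mu t^2 x = t(f(x) - \lambda x)$. Setting $\alpha := f - \lambda\,\id$ and $\beta := g - \mu\,\id$, this reads $\beta(t^2 x) = t\,\alpha(x)$; specializing at $t=1$ forces $\beta = \alpha$, after which the relation becomes $\alpha(t^2 x) = t\,\alpha(x)$, which is exactly the root-linearity condition. Hence $f(x) = \lambda x + \alpha(x)$, $g(x) = \mu x + \alpha(x)$, $u(x) = \mu x$, $v(x) = \lambda x$, as claimed. The main obstacle I anticipate is the characteristic $2$ case: establishing the companion identity (II) and recognizing that $t \mapsto t+1$ gives precisely the right cancellation to force linearity of $u$ and $v$ is the decisive step; once that is done, the root-linear form $\alpha$ emerges almost automatically from a single substitution.
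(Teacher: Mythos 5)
Your proof is correct and follows essentially the same route as the paper: in characteristic not $2$ you decouple the identity via the substitution $t\mapsto -t$, and in characteristic $2$ you derive the same companion identity (the paper's substitution $(x,t)\mapsto(xt^2,t^{-1})$ followed by multiplication by $t$) and establish linearity of $u$ and $v$ by exploiting additivity in $t$ — your $t\mapsto t+1$ trick is exactly the paper's relation $s\,u(tx)+t\,u(sx)=0$ specialized at $s=1$. The final extraction of the root-linear form $\alpha$ is also the same, up to the cosmetic difference that the paper first reduces to $u=v=0$ while you substitute directly.
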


\begin{proof}
Assume first that $\K$ has characteristic not $2$.
Then, identity \eqref{basicsymidentity} applied to $t=\pm 1$ yields
$$g-u=f-v \quad \text{and} \quad g-u=v-f,$$
whence $g=u$ and $f=v$.
Next, letting $x \in \K$ and $t \in \K$, we have
$$g(t^2x)-tg(tx)=t f(x)-f(tx).$$
Applying this to $-t$, we find $g(t^2x)-tg(tx)=-t f(x)+f(tx)$.
Thus, as $\K$ has characteristic not $2$, we deduce that $f(tx)=tf(x)$ and $g(t^2x)=tg(tx)$.
Varying $t$ and $x$ yields that $f$ and $g$ are linear, and the claimed result ensues in that case.

\vskip 3mm
Assume now that $\K$ has characteristic $2$.
Fix $x \in \K$. Then,
$$\forall t\in \K, \quad t\,u(tx)=tf(x)+v(tx)+g(t^2 x).$$
On the right-hand side of this identity is an additive map with respect to $t$.
Therefore,
$$\forall (s,t)\in \K^2, \; s\,u(tx)+t\,u(sx)=0.$$
Taking $s=1$ and varying $t$, we deduce that $u$ is linear.
Fixing $x \in \K$ and $t \in \K^*$, applying identity \eqref{basicsymidentity} to the pair $(xt^2,t^{-1})$
and multiplying it with $t$ yields
$$tg(x)+tv(tx)=f(t^2x)+u(t x).$$
As this also holds for $t=0$, we deduce that $(g,f,v,u)$ satisfies identity \eqref{basicsymidentity}, and
hence the above proof shows that $v$ is linear.

Replacing $(f,g,u,v)$ with $(f-v,g-u,0,0)$, we see that identity \eqref{basicsymidentity} is still satisfied in this new situation because $u$ and $v$
are both linear, whence no generality is lost in assuming that $u=v=0$.
Applying \eqref{basicsymidentity} with $t=1$ then yields $f=g$. Finally, identity
\eqref{basicsymidentity} yields that $f$ is a root-linear form on $\K$. This finishes the proof.
\end{proof}

With the result of Lemma \ref{2by2identity}, we find that:
\begin{itemize}
\item If $\K$ has characteristic not $2$, then there are scalars $\lambda$ and $\mu$ such that
$$\forall M=\begin{bmatrix}
a & b \\
b & c
\end{bmatrix} \in \Mats_2(\K), \quad F(M)=\begin{bmatrix}
\lambda a+\mu b \\
\lambda b+\mu c
\end{bmatrix}=M\times \begin{bmatrix}
\lambda \\
\mu
\end{bmatrix},$$
whence $F$ is local.
\item If $\K$ has characteristic $2$, then there is a root-linear form $\alpha$ on $\K$ and a pair
$(\lambda,\mu)\in \K^2$ such that
$$\forall M=\begin{bmatrix}
a & b \\
b & c
\end{bmatrix} \in \Mats_2(\K), \quad F(M)=\begin{bmatrix}
\lambda a+\mu b+\alpha(a) \\
\lambda b+\mu c+\alpha(c)
\end{bmatrix}=M \times \begin{bmatrix}
\lambda \\
\mu
\end{bmatrix}+\begin{bmatrix}
\alpha(a) \\
\alpha(c)
\end{bmatrix}.$$
\end{itemize}
Thus, the proof of Theorem \ref{symmetrictheo} is complete in the special case when $n=2$.

\subsection{Analyzing the range-compatible homomorphisms on $\Mats_n(\K)$ for $n>2$}\label{symmetricn>2}

Now, we tackle the general case. Assume that $n > 2$, and let $F : \Mats_n(\K) \rightarrow \K^n$
be a range-compatible homomorphism.
For every $S \in \Mats_2(\K)$, the vector $F(S \oplus 0_{n-2})$ must belong to $\K^2 \times \{0\}$.
Thus, we recover a range-compatible homomorphism
$$F_{1,2} : \Mats_2(\K) \rightarrow \K^2$$
such that
$$\forall S \in \Mats_2(\K),
\; F(S \oplus 0_{n-2})=\begin{bmatrix}
F_{1,2}(S) \\
[0]_{(n-2) \times 1}
\end{bmatrix}.$$
Applying the case $n=2$ to $F_{1,2}$ yields scalars $a_{1,2}$ and $a_{2,1}$ such that
$$\forall x \in \K, \; F(x E_{1,2}+x E_{2,1})=\begin{bmatrix}
a_{1,2}\,x \\
a_{2,1}\,x \\
[0]_{(n-2) \times 1}
\end{bmatrix}.$$
More generally, for every $(i,j)\in \lcro 1,n\rcro^2$ with $i \neq j$, we find
scalars $a_{i,j}$ and $a_{j,i}$ such that
$$\forall x \in \K, \; F(x E_{i,j}+x E_{j,i})=a_{i,j}\,x\, e_i+a_{j,i}\,x\, e_j,$$
where $(e_1,\dots,e_n)$ denotes the standard basis of $\K^n$.
Next, we prove that $a_{i,j}$ depends only on $j$.
Let indeed $k \in \lcro 1,n\rcro \setminus \{i,j\}$.
Then, $\im (E_{i,j}+E_{j,i}+E_{k,j}+E_{j,k})=\Vect(e_i+e_k,e_j)$, and on the other hand
$F(E_{i,j}+E_{j,i}+E_{k,j}+E_{j,k})=a_{i,j}\, e_i+a_{j,i} \,e_j+a_{k,j}\, e_k+a_{j,k}\, e_j$.
Therefore, $a_{k,j}=a_{i,j}$.
This yields scalars $y_1,\dots,y_n$ such that for all $x \in \K$ and for all distinct indices
$i$ and $j$,
$$F(x E_{i,j}+x E_{j,i})=y_j x\, e_i+y_i x\, e_j.$$
Therefore, with $Y:=\begin{bmatrix}
y_1 & \cdots & y_n
\end{bmatrix}^T$, one has $F(M)=MY$ for every symmetric matrix $M \in \Mats_n(\K)$ with diagonal zero.
As $Y \mapsto MY$ is local, we may replace $F$ with $M \mapsto F(M)-MY$.

Therefore, no generality is lost in assuming that $F$ vanishes at every symmetric matrix with diagonal zero.
From there, we split the discussion in two cases, whether $\K$ has characteristic $2$ or not.

\begin{itemize}
\item Assume that $\K$ has characteristic not $2$. Coming back to $F_{1,2}$,
we have a vector $Z \in \K^2$ such that $F_{1,2}(S)=SZ$ for all $S \in \Mats_2(\K)$.
Applying this to $S=\begin{bmatrix}
0 & 1 \\
1 & 0
\end{bmatrix}$ yields $Z=0$. Therefore $F$ vanishes at $x\,E_{1,1}$ and $x\,E_{2,2}$, for all $x \in \K$.
More generally, this line of reasoning shows that, for all distinct $i$ and $j$ in $\lcro 1,n\rcro$ and all $x \in \K$,
the map $F$ vanishes at $x\,E_{i,i}$ and $x\,E_{j,j}$. As $F$ is additive, we conclude that $F=0$.

\item Assume that $\K$ has characteristic $2$. Then, we find a vector $Z\in \K^2$ and a root-linear form $\alpha$ on $\K$ such that, for all
$S=\begin{bmatrix}
x & y \\
y & z
\end{bmatrix} \in \Mats_2(\K)$, $F_{1,2}(S)=SZ+\begin{bmatrix}
\alpha(x) \\
\alpha(z)
\end{bmatrix}$. Applying this to $S=\begin{bmatrix}
0 & 1 \\
1 & 0
\end{bmatrix}$ yields $Z=0$. Thus, $F(x\, E_{1,1})=F(x\, E_{2,2})=\alpha(x)$ for all $x \in \K$. \\
More generally, this line of reasoning yields, for all $i \in \lcro 2,n\rcro$, a root-linear form
$\beta_i$ on $\K$ such that $\forall x \in \K, \; F(x\, E_{1,1})=F(x\, E_{i,i})=\beta_i(x)$.
Obviously, $\beta_i=\alpha$ for all $i \in \lcro 2,n\rcro$.
As $F$ is additive, one concludes that
$$\forall M \in \Mats_n(\K), \; F(M)=\begin{bmatrix}
\alpha(m_{1,1}) & \alpha(m_{2,2}) & \cdots & \alpha(m_{n,n})
\end{bmatrix}^T,$$
which completes the proof.
\end{itemize}

\subsection{Linear range-compatible maps on $\Mats_n(\K)$}\label{linearsym}

We complete the proof of Theorem \ref{symtheorepeated} by considering the case of
linear range-compatible maps on $\Mats_n(\K)$. If $\K$ has characteristic not $2$, then we have
seen that every range-compatible homomorphism on $\Mats_n(\K)$ is local, whence this is also the case of
range-compatible linear maps. Assume now that $\K$ has characteristic $2$ and more than $2$ elements.
Then, the following lemma, which will be reused later in the article, is relevant to our study:

\begin{lemma}\label{linear+rootlinearLemma}
Assume that $\K$ has characteristic $2$ and more than $2$ elements.
Let $\alpha : U \rightarrow V$ be a map that is both linear and root-linear. Then, $\alpha=0$.
\end{lemma}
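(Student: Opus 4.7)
The plan is to exploit both hypotheses on $\alpha$ simultaneously at the same input. Since $\alpha$ is linear, I have $\alpha(\lambda^2 x) = \lambda^2\, \alpha(x)$ for every $\lambda \in \K$ and every $x \in U$. Since $\alpha$ is root-linear, I also have $\alpha(\lambda^2 x) = \lambda\, \alpha(x)$. Subtracting one relation from the other gives
\[
(\lambda^2 - \lambda)\, \alpha(x) = 0 \quad \text{for all } (\lambda,x) \in \K \times U.
\]

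Now I use the hypothesis $\# \K > 2$: the polynomial $T^2 - T = T(T-1)$ has exactly two roots in $\K$, namely $0$ and $1$, so there exists $\lambda_0 \in \K \setminus \{0,1\}$. For such a $\lambda_0$, the scalar $\lambda_0^2 - \lambda_0 = \lambda_0(\lambda_0-1)$ is a product of non-zero elements of the field $\K$, hence it is non-zero and invertible. The displayed identity then forces $\alpha(x) = 0$ for every $x \in U$, which is the conclusion. There is no real obstacle here: the only subtlety is to notice that the characteristic 2 assumption built into the definition of root-linearity is not needed explicitly in the proof, while the condition $\#\K > 2$ is essential (indeed over $\F_2$ every linear map is trivially root-linear since $\lambda^2 = \lambda$ for all $\lambda$, and the statement fails).
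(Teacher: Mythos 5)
Your proof is correct and is essentially identical to the paper's: both combine $\alpha(\lambda^2 x)=\lambda^2\alpha(x)$ (linearity) with $\alpha(\lambda^2 x)=\lambda\,\alpha(x)$ (root-linearity) to get $(\lambda^2-\lambda)\alpha(x)=0$ and then choose $\lambda\in\K\setminus\{0,1\}$. Your side remarks about where each hypothesis is actually used are also accurate.
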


Over $\F_2$, the root-linear maps are the linear maps, which explains the restriction on the cardinality of $\K$.

\begin{proof}
Let $x \in U$. Choose $\lambda \in \K \setminus \{0,1\}$. Then,
$(\lambda^2-\lambda)\alpha(x)=\alpha(\lambda x)-\alpha(\lambda x)=0$, and hence
$\alpha(x)=0$.
\end{proof}

Now, if we let $F$ be a range-compatible linear map on $\Mats_n(\K)$, then we know from
Theorem \ref{symmetrictheo} that $F$ is the sum of a local map and of a map $M \mapsto \begin{bmatrix}
\alpha(m_{1,1}) & \cdots & \alpha(m_{n,n})
\end{bmatrix}^T$ for some root-linear form $\alpha$ on $\K$.
As $F$ is linear, we see that $\alpha$ is also linear, whence it is zero. Therefore, $F$ is local.
Thus, the proof of Theorem \ref{symtheorepeated} is now complete.

\subsection{Application to spaces of Type 2 or 3}

Now that Theorem \ref{symmetrictheo} has been proved, we can use it, in conjunction with the Embedding Lemma, the Splitting Lemma
and Proposition \ref{espacetotal}, to obtain the following general result.
Statements (d) and (e) in Theorem \ref{classtheo2} are obvious special cases of it:

\begin{cor}\label{symmetriccor}
Let $n$ and $p$ be non-negative integers, and let $r \geq 2$ be an integer. Then:
\begin{enumerate}[(a)]
\item Every range-compatible linear map on $\Mats_r(\K) \vee \Mat_{n,p}(\K)$ is local.
\item If $\K$ has characteristic not $2$, then every range-compatible homomorphism on
$\Mats_r(\K) \vee \Mat_{n,p}(\K)$ is local.
\item If $\K$ has characteristic $2$, then the group of all range-compatible homomorphisms on
$\Mats_r(\K) \vee \Mat_{n,p}(\K)$
is generated by the local maps together with the maps of the form
$$M \longmapsto \begin{bmatrix}
\alpha(m_{1,1}) & \alpha(m_{2,2}) & \cdots & \alpha(m_{r,r}) & [0]_{1 \times n}
\end{bmatrix}^T,$$
where $\alpha$ is a root-linear form on $\K$.
\end{enumerate}
\end{cor}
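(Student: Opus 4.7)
The plan is to use the Projection Lemma to reduce the problem to cases already handled by Theorem \ref{symtheorepeated} and Proposition \ref{espacetotal}. Apply the Projection Lemma with $V_0 := \{0\}^r \times \K^n$ inside the target $\K^{r+n}$, so that $V/V_0 \simeq \K^r$ and $\calS \modu V_0$ is represented by the matrix space $\Mats_r(\K) \coprod \Mat_{r,p}(\K) \subset \Mat_{r,r+p}(\K)$ (i.e., the top $r$ rows of the matrices of $\calS$).

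By the Splitting Lemma, every range-compatible homomorphism on $\Mats_r(\K) \coprod \Mat_{r,p}(\K)$ decomposes uniquely as $f \coprod g$, where $f$ is range-compatible on $\Mats_r(\K)$ and $g$ is range-compatible on $\Mat_{r,p}(\K)$. Since $r \geq 2$, Proposition \ref{espacetotal} yields that $g$ is local, and Theorem \ref{symtheorepeated} says that $f$ is local in characteristic not $2$ and is otherwise the sum of a local map and a ``diagonal root-linear'' map $A \mapsto (\alpha(a_{1,1}),\ldots,\alpha(a_{r,r}))^T$; in the linear case (a) with $\#\K > 2$, the root-linear piece must vanish by Lemma \ref{linear+rootlinearLemma}. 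The local part of $F \modu V_0$ lifts trivially to a local map on $\calS$, and the diagonal root-linear part lifts to the homomorphism $R : s \mapsto \bigl((\alpha(a_{i,i}))_{i=1}^r,\,[0]_{n \times 1}\bigr)$ on $\calS$; this $R$ is range-compatible because $R(s) \in \im A \oplus \{0\} \subset \im s$, by Section \ref{specialRCrootlinear} for the first inclusion and the identity $s \cdot (y,0) = (Ay,0)$ for the second. Subtracting these lifts from $F$, we reduce to the case $F(\calS) \subset V_0$.

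Write then $F(s) = (0, H(s))$ with $H : \calS \to \K^n$ a homomorphism, and use the direct-sum decomposition of $\calS$ (as an abelian group) into its three blocks $A$, $C$, $B$ to get, by additivity, $H = H_A + H_C + H_B$ for homomorphisms on the three respective blocks. Applying range-compatibility at $s = A \oplus 0$, whose image is $\im A \oplus \{0\}$, forces the last $n$ components of $F(s)$ to vanish, whence $H_A = 0$; analogously $H_C = 0$. Thus $F(s) = (0, H_B(B))$ and $H_B$ is a range-compatible homomorphism on $\Mat_{n,p}(\K)$.

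The main obstacle is to show $H_B = 0$. Apply range-compatibility of $F$ to $s$ with $A = 0$: the condition $(0, H_B(B)) \in \im s$ yields $z \in \K^p$ with $Cz = 0$ and $Bz = H_B(B)$, forcing $H_B(B) \in B(\Ker C)$ for every $C \in \Mat_{r,p}(\K)$. Letting $C$ vary over matrices whose single nonzero row is an arbitrary nonzero linear form $\varphi$ on $\K^p$ (allowed since $r \geq 1$), the subspace $\Ker C = \Ker \varphi$ ranges over all hyperplanes of $\K^p$, so $H_B(B) \in \bigcap_{\varphi \neq 0} B(\Ker \varphi)$. For $B \neq 0$, a brief duality argument shows this intersection is $\{0\}$: given any nonzero $w \in \im B$, pick a linear form $\tilde\varphi : \K^n \to \K$ with $\tilde\varphi(w) \neq 0$ and set $\varphi := \tilde\varphi \circ B$; then $\varphi$ is a nonzero linear form on $\K^p$ and $B(\Ker \varphi) \subset \Ker \tilde\varphi$, so $w \notin B(\Ker \varphi)$. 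Hence $H_B = 0$, and $F$ has the advertised form. The only genuinely novel step is this final duality computation; everything else reassembles previously established results.
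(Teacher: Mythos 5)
Your proof is correct, but it takes a genuinely different --- and considerably longer --- route than the one the paper has in mind. The paper obtains the corollary by splitting along \emph{columns}: one writes $\Mats_r(\K) \vee \Mat_{n,p}(\K) = \calA' \coprod \Mat_{r+n,p}(\K)$, where $\calA'$ is the embedded copy of $\Mats_r(\K)$ in $\Mat_{r+n,r}(\K)$ obtained by appending $n$ zero rows (the first $r$ columns of the matrices of $\calS$), and the last $p$ columns are completely unconstrained. The Splitting Lemma then applies to $F$ itself, Proposition \ref{espacetotal} handles the $\Mat_{r+n,p}(\K)$ factor, and the Embedding Lemma reduces the $\calA'$ factor to Theorem \ref{symtheorepeated}; this disposes of all three statements in a few lines, with no residual map to control. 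You instead project onto the top $r$ rows; this correctly identifies $F \modu V_0$, but it leaves you with the task of proving that the remaining $V_0$-valued homomorphism vanishes, which is where all your real work goes. That work is sound: the decomposition $H = H_A + H_C + H_B$, the vanishing of $H_A$ and $H_C$, the reduction to $H_B(B) \in B(\Ker C)$ for every $C$ (a single nonzero row of $C$ already realizes an arbitrary hyperplane of $\K^p$ as $\Ker C$), and the duality computation showing that $\im B \cap \bigcap_{\varphi \neq 0} B(\Ker \varphi) = \{0\}$ are all correct, as is your verification that the lifted map $R$ is range-compatible via $\im A \times \{0\} \subset \im s$. What the column splitting buys is that none of this is needed; what your argument buys is a small reusable fact, namely that no nonzero range-compatible homomorphism on such a space can take all its values in the bottom block.

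One further remark: your restriction to $\#\K>2$ in part (a) is necessary rather than an artifact of your method. Over $\F_2$ the diagonal-extraction map is linear and non-local (the paper uses precisely this example to show that the bound $2n-4$ is optimal over $\F_2$), so part (a) as literally stated fails for $\K=\F_2$; your proof establishes it exactly in the cases where it holds, which are also the only cases in which the paper invokes it.
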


\section{Advanced techniques}\label{toolssection}

In this section, we introduce two major new tools to be used in the proofs of
Theorem \ref{maintheolin} and of the Second Classification Theorem.
Most important among them is the notion of an $\calS$-adapted vector that is suited to that theorem: we shall develop
and examine that notion in the first paragraph.

\subsection{Adapted vectors}\label{adaptedvectorssection}

\begin{Def}
Let $U$ and $V$ be finite-dimensional vector spaces and $\calS$ be a linear subspace of $\calL(U,V)$.
A non-zero vector $y \in V$ is called \textbf{$\calS$-adapted} when
$$\codim_{\calL(U,V/\K y)} (\calS\modu y) \leq 2(\dim V-1)-3.$$
\end{Def}

As in the proof of Theorem \ref{classtheo1} from Section \ref{classtheo1proof}, the motivation for introducing this notion
is that the above condition amounts to saying that $\calS\modu y$ satisfies the conditions in the Second Classification Theorem,
thereby enabling an inductive strategy to analyze the range-compatible homomorphisms on $\calS$.
Fix $y \in V \setminus \{0\}$. With the rank theorem, we find that
$$\dim (\calS \modu y)=\dim \calS -\dim \bigl\{s \in \calS : \; \im s \subset \K y\bigr\}.$$
Moreover, we see with duality that
$$\dim \bigl\{s \in \calS : \; \im s \subset \K y\bigr\}=\dim U-\dim(\calS^\bot y).$$
Therefore,
\begin{equation}\label{codimensionformula}
\codim (\calS \modu y)=\codim(\calS)-\dim(\calS^\bot y).
\end{equation}

\begin{lemma}[Adapted Vectors Lemma]\label{adaptedvectorslemma}
Let $U$ and $V$ be finite-dimensional vector spaces and $\calS$ be a linear subspace of $\calL(U,V)$,
with $n:=\dim V \geq 3$ and $p:=\dim U \geq 2$.
Assume that $\codim \calS \leq 2n-3$.
Then:
\begin{enumerate}[(a)]
\item Either the set of all non-$\calS$-adapted vectors is included in a hyperplane of $V$;
\item Or $n=3$ and $\calS$ is represented, in well-chosen bases, by one of the following matrix spaces:
\begin{enumerate}[(i)]
\item The space $\{0\} \coprod \Mat_{3,p-1}(\K)$ of all $3 \times p$ matrices with first column zero.
\item $\calK_1 \coprod \Mat_{3,p-3}(\K)$, where
$$\calK_1 :=\Biggl\{\begin{bmatrix}
0 & a & b \\
c & 0 & d \\
e & f & 0
\end{bmatrix} \mid (a,b,c,d,e,f)\in \K^6\Biggr\};$$
\item $\calK_2 \coprod \Mat_{3,p-2}(\K)$, where
$$\calK_2 :=\Biggl\{\begin{bmatrix}
a & 0 \\
0 & b \\
0 & c
\end{bmatrix} \mid (a,b,c)\in \K^3\Biggr\};$$
\item $\calK_3 \coprod \Mat_{3,p-2}(\K)$, where
$$\calK_3 :=\Biggl\{\begin{bmatrix}
a & 0 \\
0 & b \\
c & c
\end{bmatrix} \mid (a,b,c)\in \K^3\Biggr\}.$$
\end{enumerate}
\end{enumerate}
Moreover, except in Case (i) above, there is a basis of $V$ that consists only of $\calS$-adapted vectors.
\end{lemma}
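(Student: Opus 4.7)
The plan is to translate ``$\calS$-adapted'' into a rank condition on $\calS^\perp$ using formula \eqref{codimensionformula}: a nonzero $y \in V$ is adapted if and only if $\dim(\calS^\perp y) \geq c - (2n-5)$, where $c := \codim \calS$. The easy cases fall out immediately: when $c \leq 2n-5$, every nonzero $y$ is adapted so (a) holds trivially; when $c = 2n-4$, the non-adapted vectors form the subspace $K := \bigcap_{t \in \calS^\perp} \ker t$, which is proper since $\calS^\perp \neq \{0\}$, hence lies in a hyperplane. The heart of the proof is the case $c = 2n-3$, where $y$ is non-adapted if and only if $\dim(\calS^\perp y) \leq 1$.

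For $c = 2n-3$, I would assume the non-adapted set is not contained in a hyperplane and pick a basis $y_1, \dots, y_n$ of $V$ of non-adapted vectors. Setting $H_i := \{t \in \calS^\perp : t(y_i) = 0\}$, each has codimension at most $1$ in $\calS^\perp$, and $\bigcap_i H_i = \{0\}$ since the $y_i$ span $V$. The codimension count
\[
2n - 3 \;=\; \dim \calS^\perp \;=\; \codim \bigcap_{i=1}^n H_i \;\leq\; \sum_{i=1}^n \codim H_i \;\leq\; n
\]
forces $n = 3$ and each $L_i := \calS^\perp y_i$ to be a line of $U$, while the evaluation $t \mapsto (t(y_1), t(y_2), t(y_3))$ becomes an isomorphism $\calS^\perp \to L_1 \oplus L_2 \oplus L_3$. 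Writing $R := \sum_{t \in \calS^\perp} \im t$, I observe $R = L_1 + L_2 + L_3 \subset U$, and split on $\dim R \in \{1, 2, 3\}$. When $\dim R = 3$, the $L_i$'s are independent; completing generators $u_i \in L_i$ to a basis of $U$ shows $\calS^\perp = \Vect(E_{1,1}, E_{2,2}, E_{3,3})$ in $\Mat_{p,3}$, and a trace computation combined with the Splitting Lemma identifies $\calS$, up to equivalence, with $\calK_1 \coprod \Mat_{3, p-3}$ (case (ii)). When $\dim R = 2$, the three lines lie in a plane: either two coincide (leading to case (iii) after a suitable change of basis), or they are pairwise distinct (normalizing generators so that $u_3 = u_1 + u_2$ yields case (iv)). When $\dim R = 1$, all $L_i$ equal $R$, and a dimension count shows $\calS^\perp$ must be the full space of operators $V \to R$, producing case (i).

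For the ``moreover'' assertion: in case (a), pick $y_1$ outside a hyperplane $H$ containing the non-adapted vectors and any basis $(z_i)_{i \geq 2}$ of $H$; then $y_1, y_1 + z_2, \dots, y_1 + z_n$ all lie outside $H$ and hence form an adapted basis. In cases (ii)--(iv), explicit adapted bases can be exhibited, for instance $(e_1+e_2, e_2+e_3, e_1+e_3)$ when the characteristic is not $2$, with a small adjustment such as $(e_1+e_2, e_2+e_3, e_1+e_2+e_3)$ in characteristic $2$. The main obstacle, I expect, is the $\dim R = 2$ subcase: distinguishing the two line configurations in a plane and running the explicit $\GL_3(\K) \times \GL_p(\K)$ reduction together with the trace computations to identify $\calS$ with either $\calK_2 \coprod \Mat_{3,p-2}$ or $\calK_3 \coprod \Mat_{3,p-2}$ (and not with some other a priori possibility) takes some care.
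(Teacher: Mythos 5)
Your proposal follows essentially the same route as the paper: the same translation of adaptedness through the formula $\codim(\calS\modu y)=\codim\calS-\dim(\calS^\bot y)$, the same counting argument on a basis of non-adapted vectors forcing $n=3$ and $\dim(\calS^\bot y_i)=1$ for each $i$, and the same case analysis on the configuration of the three lines $\calS^\bot y_i$ in $U$ (your split according to $\dim R\in\{1,2,3\}$ is equivalent to the paper's split according to how many of the three lines coincide). Your direct treatment of the case $\codim\calS=2n-4$ via the common kernel of $\calS^\bot$ is a slightly cleaner variant of the paper's argument there.

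One concrete slip occurs in the ``moreover'' part: the basis $(e_1+e_2,\,e_2+e_3,\,e_1+e_3)$ you propose as an instance is not a basis of adapted vectors in case (iii). Indeed, for $\calS=\calK_2\coprod \Mat_{3,p-2}(\K)$ the orthogonal $\calS^\bot$ consists of the matrices whose first column is $(0,a,0,\dots,0)^T$ and whose second and third columns are $(b,0,\dots,0)^T$ and $(c,0,\dots,0)^T$; hence for $y=\alpha e_1+\beta e_2+\gamma e_3$ the space $\calS^\bot y$ has dimension $2$ exactly when $\alpha\neq 0$ \emph{and} $(\beta,\gamma)\neq(0,0)$. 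The vector $e_2+e_3$ has $\alpha=0$, so $\dim\calS^\bot(e_2+e_3)=1$ and it is not $\calS$-adapted. The assertion itself is of course still true: the single family $(e_1+e_2,\,e_1+e_3,\,e_1+e_2+e_3)$ used in the paper consists of adapted vectors for each of $\calK_1$, $\calK_2$, $\calK_3$ and is a basis in every characteristic (its determinant is $-1$), so no separate characteristic-$2$ adjustment is needed either.
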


\begin{proof}
Fix $y \in V \setminus \{0\}$.
Using Formula \eqref{codimensionformula}, we see that:
\begin{itemize}
\item If $\codim \calS=2n-3$, then
$y$ is non-$\calS$-adapted if and only if $\dim \calS^\bot y \leq 1$.
\item If $\codim \calS= 2n-4$, then $y$ is non-$\calS$-adapted if and only if $\calS^\bot y =\{0\}$.
\item Otherwise, every non-zero vector of $V$ is $\calS$-adapted.
\end{itemize}
Assuming that no hyperplane of $V$ contains all the non-$\calS$-adapted vectors,
we find a basis $(y_1,\dots,y_n)$ consisting of such vectors, and we aim at proving that $\calS$
is equivalent to one of the spaces listed in Case (b).

\noindent \textbf{Case 1.} $\codim \calS<2n-3$. \\
Then, for all $i \in \lcro 1,n\rcro$, we find $\calS^\bot y_i=\{0\}$, whence
$\calS^\bot=\{0\}$. Thus, $\calS=\calL(U,V)$ and we see that $\codim \calS <2n-4$, whence
every non-zero vector of $V$ is $\calS$-adapted, contradicting our assumptions.

\noindent \textbf{Case 2.} $\codim \calS=2n-3$. \\
For $i \in \lcro 1,n\rcro$, set $D_i:=\calS^\bot y_i$. Then, we see that
$\dim D_i \leq 1$ for all $i \in \lcro 1,n\rcro$.
Denote by $\calT$ the space of all operators $t \in \calL(V,U)$ such that $t(y_i) \in D_i$ for all $i \in \lcro 1,n\rcro$.
Then, we find
$$2n-3=\dim \calS^\bot \leq \dim \calT \leq \underset{i=1}{\overset{n}{\sum}}\dim D_i \leq n.$$
As $n \geq 3$, we deduce that $n=3$ and that all the above inequalities are equalities, which yields
$\dim D_i=1$ for all $i \in \lcro 1,3\rcro$, and $\calS^\bot=\calT$.
From there, we distinguish between several cases:
\begin{itemize}
\item \textbf{Subcase 2.1.} $D_1=D_2=D_3$. \\
Then, we choose a non-zero vector $x_1$ of $D_1$, which we extend into a basis $(x_1,\dots,x_p)$ of $U$. In that basis and $(y_1,y_2,y_3)$,
one sees that $\calS^\bot$ is represented by the space of all $p \times n$ matrices with all rows zero starting from the second one,
whence $\calS$ is represented by $\{0\} \coprod \Mat_{3,p-1}(\K)$.

\item \textbf{Subcase 2.2.} $\dim(D_1+D_2+D_3)=3$. \\
Then, we choose a non-zero vector $x_i$ in each $D_i$,
so that $x_1,x_2,x_3$ are linearly independent, and we extend $(x_1,x_2,x_3)$ into a basis $(x_1,\dots,x_p)$ of $U$.
In that basis and in $(y_1,y_2,y_3)$, the operator space
$\calS^\bot$ is represented by the space of all matrices of the form $\begin{bmatrix}
a & 0 & 0 \\
0 & b & 0 \\
0 & 0 & c \\
[0]_{(p-3) \times 1} & [0]_{(p-3) \times 1} & [0]_{(p-3) \times 1}
\end{bmatrix}$ with $(a,b,c)\in \K^3$. Then, $\calS$ is represented by $\calK_1 \coprod \Mat_{3,p-3}(\K)$ in those bases.
\item \textbf{Subcase 2.3.} Exactly two of the $D_i$'s are equal. \\
Without loss of generality, we may assume that
$D_2=D_3$ and $D_2 \neq D_1$. Then, we choose a basis of $U$
in which the first two vectors belong to $D_2$ and $D_1$, respectively.
In that basis and in $(y_1,y_2,y_3)$, we find that $\calS^\bot$ is represented by the space of all
matrices of the form
$\begin{bmatrix}
0 & b & c \\
a & 0 & 0 \\
[0]_{(p-2) \times 1} & [0]_{(p-2) \times 1} & [0]_{(p-2) \times 1}
\end{bmatrix}$ with $(a,b,c)\in \K^3$, whence $\calS$ is represented by $\calK_2 \coprod \Mat_{3,p-2}(\K)$ in those bases.

\item \textbf{Subcase 2.4.} $D_1,D_2,D_3$ are pairwise distinct and
$\dim(D_1+D_2+D_3)=2$. Then we can choose non-zero vectors
$x_1 \in D_1$ and $x_2 \in D_2$ such that $x_1-x_2 \in D_3$. Extending $(x_2,x_1)$ into a basis of $U$,
we see that, in this basis and in the basis $(y_1,y_2,y_3)$ of $V$, the space $\calS^\bot$
is represented by the space of all matrices of the form
$\begin{bmatrix}
0 & b & c \\
a & 0 & -c \\
[0]_{(p-2) \times 1} & [0]_{(p-2) \times 1} & [0]_{(p-2) \times 1}
\end{bmatrix}$ with $(a,b,c)\in \K^3$.
Thus, in those bases, $\calS$ is represented by $\calK_3 \coprod \Mat_{3,p-2}(\K)$.
\end{itemize}
Finally, if we denote by $(e_1,e_2,e_3)$ the standard basis of $\K^3$, we see that
$(e_1+e_2,e_1+e_3,e_1+e_2+e_3)$ is a basis of $\K^3$ in which all the vectors are $\calS$-adapted
whenever $\calS$ equals one of the spaces $\calK_1 \coprod \Mat_{3,p-3}(\K)$, $\calK_2 \coprod \Mat_{3,p-2}(\K)$
or $\calK_3 \coprod \Mat_{3,p-2}(\K)$. Thus, there is a basis of $\calS$-adapted vectors unless $\calS$
falls into Case (i) above.
\end{proof}

\subsection{The three vectors lemma}

\begin{lemma}[Three vectors lemma]\label{3vectorslemma}
Let $\calS$ be a linear subspace of $\calL(U,V)$ with $\codim \calS \leq 2\dim V-3$.
Let $F : \calS \rightarrow V$ be a range-compatible homomorphism.
Assume that there are three linearly independent vectors $y_1$, $y_2$ and $y_3$ of $V$
such that $F\modu y_1$, $F\modu y_2$, $F\modu y_3$ are all local.
Then, $F$ is local.
\end{lemma}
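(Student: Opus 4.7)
The plan is to show that the three vectors $x_1, x_2, x_3 \in U$ produced by the local-ness of each $F\modu y_i$ must all coincide; combined with the linear independence of $y_1, y_2, y_3$, this will immediately force $F$ to be local. Concretely, for each $i \in \{1,2,3\}$ there exist $x_i \in U$ and a group homomorphism $\varphi_i : \calS \to \K$ with $F(s) = s(x_i) + \varphi_i(s)\, y_i$ for all $s \in \calS$. If two of the $x_i$ coincide (say $x_1 = x_2$), then subtracting the corresponding expressions and using the independence of $y_1, y_2$ forces $\varphi_1 = \varphi_2 = 0$, whence $F(s) = s(x_1)$ is local. So I may assume the $x_i$ are pairwise distinct; setting $a_{ij} := x_i - x_j$, pairwise subtraction yields $s(a_{ij}) \in \Vect(y_i, y_j)$ for all $s \in \calS$ and all distinct $i, j \in \{1,2,3\}$.

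Next I would split on whether $a_{12}$ and $a_{23}$ are linearly independent. If they are not, then all three $a_{ij}$ are non-zero scalar multiples of a common vector $a \in U$ (recall $a_{13} = a_{12} + a_{23}$), so each $s \in \calS$ sends $a$ into $\Vect(y_1, y_2) \cap \Vect(y_1, y_3) \cap \Vect(y_2, y_3) = \{0\}$ by the independence of the $y_i$'s. Hence $s(x_1) = s(x_2)$ for every $s$, and we conclude as in the coincidence case above.

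The central case is when $a_{12}, a_{23}$ are linearly independent. Set
$$\calT := \bigl\{t \in \calL(U,V) \mid t(a_{ij}) \in \Vect(y_i, y_j) \text{ for all distinct } i, j \in \{1,2,3\}\bigr\},$$
which contains $\calS$. The key computation is that $\codim_{\calL(U,V)}\calT = 2\dim V - 3$: the conditions on $t(a_{12})$ and on $t(a_{23})$ are independent (since $a_{12}, a_{23}$ are), each of codimension $\dim V - 2$, while, once these two hold, the equality $a_{13} = a_{12} + a_{23}$ reduces the third condition $t(a_{13}) \in \Vect(y_1, y_3)$ to the single scalar equation that the $y_2$-components of $t(a_{12})$ and $t(a_{23})$ cancel. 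Combined with $\codim\calS \leq 2\dim V - 3$, this forces $\calS = \calT$.

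To conclude, I would choose bases starting with $(a_{12}, a_{23})$ of $U$ and $(y_1, y_2, y_3)$ of $V$, in which $\calT$ splits as $\calT_0 \coprod \Mat_{n, p-2}(\K)$ (with $n := \dim V$, $p := \dim U$), where $\calT_0$ is the $3$-dimensional subspace of $\Mat_{n,2}(\K)$ consisting of matrices with first column $(a, b, 0, \dots, 0)^T$ and second column $(0, -b, c, 0, \dots, 0)^T$. The Embedding Lemma reduces the study of range-compatible homomorphisms on $\calT_0$ to the analogous $3$-dimensional subspace of $\Mat_{3,2}(\K)$. On that small space, evaluating such a homomorphism on the rank-one matrices $aE_{1,1}$, $b(E_{2,1} - E_{2,2})$, $cE_{3,2}$ extracts three group endomorphisms $\varphi, \chi, \psi$ of $(\K,+)$; applying range-compatibility to a generic matrix with $a, b, c \in \K^*$ then forces each of these endomorphisms to be linear and interrelates them by a short direct calculation, so that the homomorphism is local. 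Combining this with Proposition~\ref{espacetotal} for the $\Mat_{n, p-2}(\K)$ factor, the Splitting Lemma yields that $F$ is local on $\calS = \calT$. The main obstacle is the codimension computation for $\calT$: the redundancy coming from $a_{13} = a_{12} + a_{23}$ must be quantified exactly, and it is precisely this redundancy that makes the upper bound $2\dim V - 3$ on $\codim\calS$ tight enough to force $\calS = \calT$.
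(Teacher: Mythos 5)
Your proof is correct, and it follows the same skeleton as the paper's up to the decisive last step: both arguments dispose of the degenerate configurations of $x_1,x_2,x_3$ in the same way, and both hinge on the observation that the constraints $s(x_i-x_j)\in\Vect(y_i,y_j)$ define a subspace of codimension exactly $2\dim V-3$ (the redundancy $a_{13}=a_{12}+a_{23}$ accounting for the $-3$), which the hypothesis then forces to coincide with $\calS$. Where you diverge is the endgame. The paper runs a \emph{reductio}: having pinned $\calS$ down, it computes the explicit form that a non-local $F$ would take in the adapted bases (namely $M\mapsto a\,e_1$) and exhibits a single matrix of the space on which this map violates range-compatibility. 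You instead classify \emph{all} range-compatible homomorphisms on the pinned-down space: your $3\times 2$ model $\calT_0$ is equivalent (permute rows, negate a column) to the space $\calK_3$ of Lemma \ref{specialn=3lemma}, and your determinant computation with the three row-wise endomorphisms is essentially the paper's proof of that lemma, transplanted here. Your route is direct rather than by contradiction and proves slightly more at this spot (it yields the full statement that every range-compatible homomorphism on $\calT$ is local, via the Embedding and Splitting Lemmas together with Proposition \ref{espacetotal}); the paper's is shorter here because it only needs to refute one specific candidate map, and it defers the classification on $\calK_3$-type spaces to Section \ref{specialn=3section}, where it is needed anyway. Both are complete and correct.
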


\begin{proof}
We use a \emph{reductio ad absurdum}, by assuming that $F$ is non-local. Set $n:=\dim V$ and $p:=\dim U$.

Let us choose three vectors $x_1,x_2,x_3$ of $U$ such that $F \modu y_i$ is the evaluation at
$x_i$ for all $i \in \lcro 1,3\rcro$.
Note that none of our assumptions is lost in subtracting a local map from $F$, whence
we can subtract a fixed vector to each $x_i$ without losing any generality.
Assume that $x_1=x_2$. Then, by subtracting $x_1$, we may assume that $x_1=x_2=0$; in that case,
for every $s \in \calS$, we find that $F(s)$ belongs to both $\K y_1$ and $\K y_2$, whence $F(s)=0$.
Thus, $x_1 \neq x_2$ and more generally we obtain that $x_1,x_2,x_3$ are pairwise distinct.

Once more, no generality is lost in assuming that $x_1=0$, whence $x_2$ and $x_3$ are distinct non-zero vectors.
Fix $s \in \calS$. Then, $F(s) \in \K y_1$ since $x_1=0$.
On the other hand, $s(x_2)-F(s) \in \K y_2$ and $s(x_3)-F(s) \in \K y_3$, and hence
$s(x_2) \in \Vect(y_1,y_2)$, $s(x_3) \in \Vect(y_1,y_3)$ and $s(x_2-x_3) \in \Vect(y_2,y_3)$.
If $x_2$ and $x_2-x_3$ were linearly dependent, it would ensue, for all $s \in \calS$, that
$s(x_2) \in \K y_2$, which would entail that $F(s) \in \K y_2$ and hence $F(s) \in \K y_1 \cap \K y_2=\{0\}$;
that would be absurd as $F$ is non-zero.
Therefore, $x_2$ and $x_2-x_3$ are linearly independent, and we can find a basis
$\bfB$ of $U$ in which they are the first two vectors. We also extend $(y_1,y_2,y_3)$ into a basis $\bfC$ of $V$.

Finally, we note that $s(x_2)-s(x_2-x_3)=s(x_3) \in \Vect(y_1,y_3)$ for all $s \in \calS$.
Denoting by $\calM$ the space of $n \times p$ matrices representing the operators of $\calS$
in the bases $\bfB$ and $\bfC$, we deduce from the above results that for every matrix $M$ in $\calM$,
there is a triple $(a,b,c)\in \K^3$ for which the first two columns of $M$ are
$$\begin{bmatrix}
a \\
b \\
0 \\
[0]_{(n-3) \times 1}
\end{bmatrix} \quad \text{and} \quad
\begin{bmatrix}
0 \\
b \\
c \\
[0]_{(n-3) \times 1}
\end{bmatrix},$$
whereas
$$F_{\bfB,\bfC}(M)=\begin{bmatrix}
a \\
0 \\
0 \\
[0]_{(n-3) \times 1}
\end{bmatrix}.$$

However, the codimension of the space of all $n \times p$ matrices in which the first two columns have the above form
is $2n-3$, and hence $\calM$ is precisely that space. In particular, taking matrices with all columns zero starting from the third one,
we find that the vector $\begin{bmatrix}
1 \\
[0]_{(n-1) \times 1}
\end{bmatrix}$ is a linear combination of
$\begin{bmatrix}
1 \\
1 \\
0 \\
[0]_{(n-3) \times 1}
\end{bmatrix}$ and
$\begin{bmatrix}
0 \\
1 \\
1 \\
[0]_{(n-3) \times 1}
\end{bmatrix}$, which is false because
$\begin{vmatrix}
1 & 1 & 0 \\
0 & 1 & 1 \\
0 & 0 & 1
\end{vmatrix}=1$. This contradiction concludes the proof.
\end{proof}

\section{Range-compatible linear maps}

In this short section, we use the techniques featured in the preceding ones to wrap up the proof of Theorem \ref{maintheolin}.
We shall start with a brief discussion of the case $n=2$ that will be reused when we deal with non-linear range-compatible homomorphisms
in the next section. Then, we shall complete the proof of Theorem \ref{maintheolin}, first for fields with more than $2$ elements,
and then for fields with $2$ elements.

\subsection{Preliminary work for the case $n=2$}\label{n=2section}

Let $\calS$ be a linear subspace of $\Mat_{2,p}(\K)$ with codimension at most $1$. \\
If $\codim \calS=0$, then $\calS=\Mat_{2,p}(\K)$. \\
Assume now that $\codim \calS=1$. Then, $\calS^\bot$ has dimension $1$.
Choosing $A \in \calS^\bot \setminus \{0\}$, there are two cases to consider:

\noindent \textbf{Case 1.} $A$ has rank $1$. \\
Then, $A$ is equivalent to $E_{1,2}$ and hence $\calS$ is equivalent to
$\calS =\calD \coprod \Mat_{2,p-1}(\K)$, where $\calD=\K \times \{0\}$.
In particular, $\calS$ has Type~1.

\noindent \textbf{Case 2.} $A$ has rank $2$. \\
Then, $A$ is equivalent to $\begin{bmatrix}
0 & -1 \\
1 & 0 \\
[0]_{(p-2) \times 1} & [0]_{(p-2) \times 1}
\end{bmatrix}$, which shows that $\calS$ has Type~2.

\subsection{Proof of Theorem \ref{maintheolin} for fields with more than $2$ elements}\label{linearmorethan2}

Here, we assume that $\# \K>2$.
The proof works by induction on $n$.
The conditions cannot be met if $n=1$. If $n=2$,
then we have seen in the above section that $\calS$ is represented by one of the matrix spaces
$\Mat_{2,p}(\K)$, $(\K \times \{0\}) \coprod \Mat_{2,p-1}(\K)$ or $\Mats_2(\K) \coprod \Mat_{2,p-2}(\K)$, where $p=\dim U$.
In each case, we see that every range-compatible linear map on $\calS$ is local. Indeed, in the first case
this follows directly from Proposition \ref{espacetotal}; in the second one, this follows from Proposition \ref{espacetotal}
combined with the Splitting Lemma and Lemma \ref{dimU=1}; in the last case, this follows from Corollary \ref{symmetriccor}.

Now, assume that $n \geq 3$. Let $F : \calS \rightarrow V$ be a linear range-compatible map.
If $\calS$ is represented by $\{0\} \coprod \Mat_{3,p-1}(\K)$, then we see that $F$ is local
by using Proposition \ref{espacetotal} together with the Splitting Lemma.
Let us now assume that the contrary holds. Then, Lemma \ref{adaptedvectorslemma} yields a basis $(y_1,\dots,y_n)$ of $V$ in which all the vectors are $\calS$-adapted. For all $i \in \lcro 1,n\rcro$, we have $\dim (\calS \modu y_i) \leq 2(n-1)-3$, whence by induction
$F\modu y_i$ is local. As $n \geq 3$, Lemma \ref{3vectorslemma} yields that $F$ is local,
which concludes the proof.

\subsection{Proof of Theorem \ref{maintheolin} for $\F_2$}\label{linear2}

In this section, we complete the proof of Theorem \ref{maintheolin} by considering the case when $\K=\F_2$.
The strategy is largely similar to the above one, with additional technicalities.
The proof involves two basic lemmas:

\begin{lemma}\label{homogeneouspolynomiallemma}
Let $\calT$ be a vector space of linear operators between finite-dimensional vector spaces $V_1$ and $V_2$.
Let $r$ be a non-negative integer, and $t \in \calT$ be such that $\rk t \geq r$. Then,
there exists an $r$-homogeneous polynomial function $h$ on $\calT$ such that $h(t)\neq 0$
and $h$ vanishes at every operator of $\calT$ with rank less than $r$.
\end{lemma}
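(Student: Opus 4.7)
The plan is to take $h$ to be (the restriction to $\calT$ of) one of the $r \times r$ minor functions. Fix bases of $V_1$ and $V_2$, yielding an isomorphism $\calL(V_1,V_2)\simeq \Mat_{n,p}(\K)$ with $n:=\dim V_2$ and $p:=\dim V_1$; through this identification, $\calT$ becomes a linear subspace of $\Mat_{n,p}(\K)$. For each pair of subsets $I \subset \lcro 1,n\rcro$ and $J \subset \lcro 1,p\rcro$ with $\# I=\# J=r$, consider the map $\delta_{I,J}$ on $\Mat_{n,p}(\K)$ that sends a matrix $M$ to the determinant of the $r \times r$ submatrix obtained by keeping only the rows indexed by $I$ and the columns indexed by $J$. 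Since each entry of $M$ is a linear form in the natural coordinates of $\Mat_{n,p}(\K)$ and since the determinant of an $r \times r$ matrix is an $r$-homogeneous polynomial in its entries, $\delta_{I,J}$ is an $r$-homogeneous polynomial function on $\Mat_{n,p}(\K)$. Its restriction $h_{I,J}$ to the linear subspace $\calT$ remains $r$-homogeneous, as restriction to a linear subspace amounts to a linear substitution in the coordinates and therefore preserves both polynomiality and degree of homogeneity.

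The key ingredient is the classical minor characterization of rank: a matrix has rank at least $r$ if and only if at least one of its $r \times r$ minors is non-zero; equivalently, a matrix has rank strictly less than $r$ if and only if every one of its $r \times r$ minors vanishes. Thus, given $t \in \calT$ with $\rk t \geq r$, we may pick $I,J$ with $h_{I,J}(t)=\delta_{I,J}(t)\neq 0$, and this same $h_{I,J}$ automatically vanishes at every $s \in \calT$ with $\rk s<r$. Setting $h:=h_{I,J}$ completes the proof.

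I do not foresee any real obstacle: the lemma is essentially a translation of the well-known minor-based characterization of matrix rank into the language of polynomial functions on the operator space $\calT$, the only minor subtlety being to observe that restriction from $\Mat_{n,p}(\K)$ to $\calT$ preserves the $r$-homogeneity of the chosen minor.
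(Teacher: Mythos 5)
Your proof is correct and is essentially the paper's own argument: both take $h$ to be an $r \times r$ minor, relying on the minor characterization of rank, the only cosmetic difference being that the paper first normalizes the bases so that the upper-left $r \times r$ minor of $t$ is non-zero, whereas you keep arbitrary bases and select whichever minor is non-zero at $t$.
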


\begin{proof}
Set $s:=\rk(t)$, $n:=\dim V_2$ and $p:=\dim V_1$.
Choosing bases of $V_1$ and $V_2$ according to $t$, we reduce the situation to the one where
$\calT$ is a linear subspace of $\Mat_{n,p}(\K)$ and
$$t=\begin{bmatrix}
I_s & [0]_{s \times (p-s)} \\
[0]_{(n-s) \times s} & [0]_{(n-s) \times (p-s)}
\end{bmatrix}.$$
Then, for $M \in \calT$, we define $h(M)$ as the determinant of the $r \times r$ upper-left submatrix of $M$.
It is then easily checked that $h : \calT \rightarrow \K$ has the claimed properties.
\end{proof}

\begin{lemma}\label{quadformlemma}
Let $q$ be a non-zero quadratic form on a finite-dimensional vector space $V$ over $\F_2$. \\
Set $n:=\dim V$.
Then, $q^{-1}\{1\}$ is not included in an $(n-2)$-dimensional linear subspace of $V$.
\end{lemma}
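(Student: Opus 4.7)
The plan is to argue by contradiction. Suppose $H$ is an $(n-2)$-dimensional subspace of $V$ that contains $q^{-1}\{1\}$. Since $q$ takes values in $\F_2$, every vector outside $H$ must satisfy $q = 0$, so $q$ vanishes identically on $V \setminus H$. The goal is to derive that $q$ also vanishes on $H$, contradicting the assumption that $q$ is non-zero.

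The key tool is the polarization $b(x,y) := q(x+y) + q(x) + q(y)$, which is a symmetric bilinear form (alternating since we are in characteristic $2$). Because $H$ has codimension $2$, the quotient $V/H$ has four elements, so $V \setminus H$ splits as the disjoint union of three cosets $v_1 + H$, $v_2 + H$ and $(v_1+v_2) + H$, where $v_1,v_2$ are vectors of $V$ whose classes form a basis of $V/H$. The vanishing of $q$ on each of these three cosets yields three identities for any $w \in H$, namely $q(v_i + w) = 0$ for $i \in \{1,2\}$ and $q(v_1 + v_2 + w) = 0$.

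Expanding each identity with the help of $b$ (and of the trilinear-like expansion $q(a+b+c) = q(a) + q(b) + q(c) + b(a,b) + b(a,c) + b(b,c)$), the first two identities give $b(v_i, w) = q(w)$, while the third gives $q(w) + b(v_1, v_2) + b(v_1, w) + b(v_2, w) = 0$. Substituting the first two identities into the third, the terms $b(v_1,w) + b(v_2,w) = 2\,q(w) = 0$ cancel, leaving $q(w) = b(v_1, v_2)$ for every $w \in H$. Setting $w = 0$ shows $b(v_1, v_2) = 0$, hence $q$ vanishes on $H$. Combined with $q = 0$ on $V \setminus H$, this forces $q = 0$, contradicting the hypothesis.

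I do not foresee a serious obstacle here; the only delicate point is carefully expanding $q(v_1 + v_2 + w)$ in characteristic $2$, but the calculation collapses cleanly because $2\,q(w) = 0$ kills the cross terms involving $w$.
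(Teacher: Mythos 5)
Your proof is correct. Both arguments hinge on the polar form $b(x,y)=q(x+y)+q(x)+q(y)$ and on the observation that $q$ must vanish identically off the codimension-$2$ subspace $G$, but the routes diverge from there. The paper picks $e_1\notin G$, builds a basis $(e_1,\dots,e_n)$ with $e_i\notin \F_2 e_1+G$ for $i\geq 2$, deduces that $q$ vanishes on each plane $\Vect(e_1,e_i)$ so that $e_1$ lies in the radical of $b$, concludes $b=0$ by varying $e_1$, and then reaches a contradiction by noting that a non-zero \emph{linear} $q$ has $q^{-1}\{1\}$ equal to an affine hyperplane, which cannot sit inside $G$. You instead exploit the three non-trivial cosets $v_1+G$, $v_2+G$, $v_1+v_2+G$ directly: the identities $b(v_i,w)=q(w)$ and $q(w)+b(v_1,v_2)+b(v_1,w)+b(v_2,w)=0$ collapse in characteristic $2$ to $q(w)=b(v_1,v_2)$, and $w=0$ forces $q|_G=0$, hence $q=0$ outright. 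Your version is slightly more direct: it needs no basis construction, no appeal to the radical, and no separate treatment of the case where $q$ degenerates to a linear form; the paper's version, on the other hand, isolates the structural fact $b=0$, which is of independent interest. Either way the computation is airtight, including the expansion $q(a+b+c)=q(a)+q(b)+q(c)+b(a,b)+b(a,c)+b(b,c)$ and the cancellation $b(v_1,w)+b(v_2,w)=2q(w)=0$.
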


\begin{proof}
Assume that there is a linear subspace $G$ of codimension $2$ in $V$ which contains $q^{-1}\{1\}$.
Denote by $b$ the polar form of $q$.
Let $e_1 \in V \setminus G$.
Then, $H:=\K e_1+G$ is a linear hyperplane of $V$, to the effect that $V \setminus H$
spans $V$. Thus, we can extend $e_1$ into a basis $(e_2,\dots,e_n)$ of $V$ in which $e_i \not\in H$ for all $i \in \lcro 2,n\rcro$.
From there, one sees that $\Vect(e_1,e_i) \cap G=\{0\}$ for all $i \in \lcro 2,n\rcro$.
Fixing $i \in \lcro 1,n\rcro$, we deduce that $q$ vanishes everywhere on $\Vect(e_1,e_i)$,
and in particular $b(e_1,e_i)=0$. It follows that $e_1$ belongs to the radical of $b$.
Varying $e_1$, we deduce that $b=0$. Thus, $q$ is a linear form on $V$. As $q$ is non-zero,
$q^{-1}\{1\}$ is a non-linear hyperplane of $V$, which contradicts the assumption that $q^{-1}\{1\} \subset G$.
\end{proof}

Now, we prove Theorem \ref{maintheolin} by induction on $\dim V$.
If $\dim V=2$ the result is already known (see Proposition \ref{espacetotal} or, alternatively,
Theorem \ref{classtheo1}).
Assume now that $\dim V \geq 3$.
Let us say that a non-zero vector $z$ of $V$ is \textbf{super-$\calS$-adapted} when
$\codim(\calS \modu z) \leq 2(\dim V-1)-4$. As $\codim \calS \leq 2 \dim V-4$,
one sees from identity \eqref{codimensionformula} that
a non-zero vector $z$ is super-$\calS$-adapted whenever $\dim \calS^\bot z \geq 2$,
which reads $\rk \widehat{z} \geq 2$ if we denote by $\widehat{z}$ the operator $t \in \calS^\bot \mapsto t(z)$.

Now, the discussion splits into two main cases:

\noindent \textbf{Case 1.}
One has $\rk \widehat{z} \leq 1$ for all $z \in V$.
By the classification of vector spaces of operators with rank at most $1$, there are two options:
\begin{itemize}
\item \textbf{Subcase 1.1.} Some $1$-dimensional subspace $D$ of $U$ contains the range of every operator $\widehat{z}$. \\
Then, $\im t \subset D$ for all $t \in \calS^\bot$.
Choosing a non-zero vector of $D$ and extending it into a basis of $U$, we find that, in that basis and an arbitrary basis of $V$,
the space $\calS$ is represented by $\calD \coprod \Mat_{n,p-1}(\K)$ for some linear subspace
$\calD$ of $\K^n$. As every range-compatible linear map on $\calD$ is local, we deduce from Proposition \ref{espacetotal} and
the Splitting Lemma that every range-compatible linear map on $\calS$ is local.

\item \textbf{Subcase 1.2.} There is a hyperplane $H$ of $\calS^\bot$ on the whole of which all the operators $\widehat{z}$ vanish. \\
Obviously, no non-zero operator in $\calS^\bot$ is annihilated by all the operators $\widehat{z}$, whence $\dim \calS^\bot=1$.
Then, $\codim \calS=1$ and the conclusion follows directly from Theorem \ref{classtheo1}.
\end{itemize}

\noindent \textbf{Case 2.} Some vector $z_0 \in V$ satisfies $\rk \widehat{z_0} \geq 2$. \\
By Lemma \ref{homogeneouspolynomiallemma}, we can find a quadratic form $q$ on $V$ such that
$q(z_0) \neq 0$ and $\rk \widehat{z} \geq 2$ whenever $q(z) \neq 0$.
From there, we have two additional subcases to discuss.

\noindent \textbf{Subcase 2.1.} $\dim V>3$. \\
Lemma \ref{quadformlemma} yields linearly independent vectors $y_1,y_2,y_3$ of $V$ such that
$\rk \widehat{y_i} \geq 2$ for all $i \in \{1,2,3\}$.
Then, for all $i \in \{1,2,3\}$, we have $\codim (\calS \modu y_i) \leq 2(\dim V-1)-4$,
and by induction we deduce that $F \modu y_i$ is local.
By Lemma \ref{3vectorslemma}, it ensues that $F$ is local.

\noindent \textbf{Subcase 2.2.} $\dim V=3$. \\
Lemma \ref{quadformlemma} yields linearly independent vectors $y_1,y_2$ of $V$ such that
$\rk \widehat{y_i} \geq 2$ for all $i \in \{1,2\}$.
In that case, we actually have $\codim (\calS \modu y_1)=\codim (\calS \modu y_2)=0$, whence
$\calS\modu y_1=\calL(U,V/\K y_1)$ and $\calS\modu y_2=\calL(U,V/\K y_2)$.
By Proposition \ref{espacetotal}, we obtain two vectors $x_1$ and $x_2$ in $U$ such that $F(s)=s(x_1) \mod \K y_1$ and
$F(s)=s(x_2) \mod \K y_2$ for all $s \in \calS$.
Subtracting $s \mapsto s(x_1)$ from $F$, we see that no generality is lost in assuming that $x_1=0$,
so that $F(s) \in \K y_1$ for all $s \in \calS$.
It follows that $s(x_2) \in \Vect(y_1,y_2)$ for all $s \in \calS$. \\
If $x_2 \neq 0$, identity $\calS\modu y_2=\calL(U,V/\K y_2)$ shows that
we can choose an operator in $\calS\modu y_2$ that assigns to $x_2$
a vector outside of $\Vect(y_1,y_2)/\K y_2$, contradicting the above result.
Thus, $x_2=0$. Therefore, we have $F(s) \in \K y_1 \cap \K y_2=\{0\}$ for all $s \in \calS$,
whence $F=0$. This completes the proof.

Thus, Theorem \ref{maintheolin} is now established for all fields.

\section{Non-linear range-compatible homomorphisms}\label{nonlinearsection}

This section is devoted to the last remaining point in Theorem \ref{classtheo2},
that is the proof of statement (b). Thus, we have to show, with the assumptions of Theorem \ref{classtheo2},
that if $\calS$ is of none of Types 1 to 3 and $\dim \calS \leq 2\dim V-3$, then every range-compatible homomorphism on $\calS$ is local.
Again, the proof is done by induction on dimension of $V$, using
adapted vectors. We set $n:=\dim V$ and $p:=\dim U$.

In the situation where $n>2$, the basic idea is to assume that
$\calS$ is neither of Type 1 nor of Type 3 and that there is a non-linear range-compatible homomorphism on $\calS$.
Then, we shall slowly progress towards the conclusion that $\calS$ has Type 2.

A problem we need to get rid of is the situation where $V$ has a basis of non-$\calS$-adapted vectors:
we have seen in Lemma \ref{adaptedvectorslemma} that this situation arises only when $n=3$ and
$\calS$ has one of four very specific types.
In Section \ref{specialn=3section}, we shall give a direct proof that the excepted conclusion holds in those situations (along with an additional one that is to be encountered later).
Then, we will be ready to start our inductive proof (Section \ref{startinduction}).

\subsection{Preliminary work for $n=3$}\label{specialn=3section}

\begin{lemma}\label{specialn=3lemma}
Let $\K$ be an arbitrary field.
In addition to the notation $\calK_1$, $\calK_2$ and $\calK_3$ from Lemma \ref{adaptedvectorslemma},
we set
$$\calK_4:=\biggl\{ \begin{bmatrix}
b & c \\
a & 0 \\
0 & a
\end{bmatrix} \mid (a,b,c)\in \K^3\biggr\}.$$
For all $i \in \{1,3,4\}$, every range-compatible homomorphism on $\calK_i$ is local. \\
\end{lemma}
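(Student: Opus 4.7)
The plan is to handle each of the three spaces separately via a common recipe. In every case, I would use Lemma \ref{ligneparligne} to write $F$ row by row as group endomorphisms of $(\K,+)$ applied to the entries of $M$. Range-compatibility on a judiciously chosen family of low-rank matrices should then force these endomorphisms to be $\K$-linear, and the resulting scalars will assemble into a column vector $X$ such that $F(M) = MX$.

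For $\calK_1$, the row-wise decomposition produces six endomorphisms $f_{i,j}$, one per off-diagonal position $(i,j)$, such that the $i$-th coordinate of $F(M)$ is the sum of the two $f_{i,j}$ applied to the off-diagonal entries in row $i$. I would then restrict $F$ to the three ``single-column'' $2$-dimensional subspaces of $\calK_1$ in which only the two entries of one given column of $M$ are allowed to be nonzero (e.g.\ $a=b=d=f=0$ singles out column $1$). Each such matrix has rank at most $1$ and column space generated by that column, so requiring $F(M)$ to lie in that line yields an identity of the shape $e\,f_{2,1}(c) = c\,f_{3,1}(e)$ for column $1$. Specializing $c=1$, then $e=1$, forces $f_{2,1}$ and $f_{3,1}$ to be linear with a common coefficient $\lambda_1$. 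Doing the same for the other two columns yields $\lambda_2$ and $\lambda_3$, and a direct check shows $F(M) = M(\lambda_1,\lambda_2,\lambda_3)^T$.

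For $\calK_3$ and $\calK_4$ I would run an analogous argument, this time exploiting rank-$2$ matrices. In $\calK_3$, I would write $F(M)=(f_1(a),g_1(b),h(c))^T$; for $a,b\neq 0$ the column space is spanned by $(a,0,c)^T$ and $(0,b,c)^T$, and the membership condition reduces to the identity $ab\,h(c) = bc\,f_1(a) + ac\,g_1(b)$. Specializing $(b,c)=(1,1)$ and then $(a,c)=(1,1)$ forces $f_1$ and $g_1$ to be linear, after which $h$ is immediately seen to be linear too, and $F$ is the evaluation at the expected vector. For $\calK_4$, writing $F(M)=(f_1(b)+g_1(c),f_2(a),g_2(a))^T$, and using that the column space for $a\neq 0$ is spanned by $(b,a,0)^T$ and $(c,0,a)^T$, range-compatibility reads $a[f_1(b)+g_1(c)] = b\,f_2(a) + c\,g_2(a)$. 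Setting $c=0$ then $b=0$ decouples this into two identities, from which the substitutions $b=1$ and $a=1$ force $\{f_1,f_2\}$ and $\{g_1,g_2\}$ to be pairs of linear maps with matching scalars, giving again an explicit $X$ with $F(M)=MX$.

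I do not foresee any serious obstacle; the whole argument is computational and every substitution involves only $0$ and $1$, so nothing requires $\K$ to be larger than $\F_2$. The only subtlety worth noting is that $\calK_2$ is deliberately absent from the statement: the row-wise decomposition there produces a first-row endomorphism that has no companion to constrain it, which is consistent with $\calK_2$ being of Type $1$ and admitting genuinely non-local range-compatible homomorphisms.
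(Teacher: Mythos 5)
Your proof is correct, and for $\calK_3$ and $\calK_4$ it is essentially identical to the paper's: row-wise decomposition via Lemma \ref{ligneparligne}, the vanishing of $\det\bigl[M \mid F(M)\bigr]$, and specializations of the parameters at $0$ and $1$ to force linearity and match the scalars. The only divergence is in the treatment of $\calK_1$: the paper simply observes that $\calK_1=\calD_1\coprod\calD_2\coprod\calD_3$ for three $2$-dimensional column spaces $\calD_i\subset\K^3$ and invokes Corollary \ref{dimU=1} together with the Splitting Lemma, whereas you redo that rank-one computation by hand on each single-column subspace; both are valid, yours being a more explicit instance of the same splitting idea. Your closing remarks are also accurate: every substitution uses only $0$ and $1$, so nothing requires $\card\K>2$, and the exclusion of $\calK_2$ is indeed explained by its Type 1 structure.
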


\begin{proof}
We can split $\calK_1 =\calD_1 \coprod \calD_2 \coprod \calD_3$, where $\calD_1$, $\calD_2$ and $\calD_3$ are $2$-dimensional subspaces of
$\K^3$. By Lemma \ref{dimU=1}, every range-compatible homomorphism on $\calD_1$ is local, and the same holds for $\calD_2$ and $\calD_3$.
Using the Splitting Lemma, one concludes that the same holds for $\calK_1$.

Let $F : \calK_3 \rightarrow \K^3$ be a range-compatible homomorphism. Using Lemma \ref{ligneparligne},
we obtain endomorphisms
$f$, $g$ and $h$ of $(\K,+)$ such that
$$F :  \begin{bmatrix}
a & 0 \\
0 & b \\
c & c
\end{bmatrix}\longmapsto \begin{bmatrix}
f(a) \\
g(b) \\
h(c)
\end{bmatrix}.$$
It follows that, for every $(a,b,c)\in \K^3$,
$$0=\begin{vmatrix}
a & 0 & f(a) \\
0 & b & g(b) \\
c & c & h(c)
\end{vmatrix}=ab\, h(c)-bc\,f(a)-ac\,g(b).$$
Fixing $a=b=1$ and varying $c$, we find that $h$ is linear. Similarly, one finds that $f$ and $g$ are linear.
Finally, $h(1)=f(1)+g(1)$ by taking $a=b=c=1$. This yields a pair $(\lambda,\mu)\in \K^2$ such that
$f=\lambda \id_\K$, $g=\mu \id_\K$ and $h=(\lambda+\mu)\id_\K$. One then checks that $F(M)=M \times \begin{bmatrix}
\lambda\\
\mu
\end{bmatrix}$ for all $M \in \calK_3$, whence $F$ is local.

Finally, let $F : \calK_4 \rightarrow \K^3$ be a range-compatible homomorphism.
Using Lemma \ref{ligneparligne}, we find endomorphisms $f$, $g$, $h$ and $i$ of $(\K,+)$ such that
$$F : \begin{bmatrix}
b & c \\
a & 0 \\
0 & a
\end{bmatrix}\longmapsto \begin{bmatrix}
h(b)+i(c) \\
f(a) \\
g(a)
\end{bmatrix}.$$
Thus, for all $(a,b,c)\in \K^3$, we find
$$0=\begin{vmatrix}
b & c & h(b)+i(c) \\
a & 0 & f(a) \\
0 & a & g(a)
\end{vmatrix}=a^2\, h(b)+a^2\, i(c)-ac\, g(a)-ab\, f(a).$$
Thus,
$$\forall (a,b,c)\in \K^3, \; a\,h(b)+a\, i(c)-c\,g(a)-b\,f(a)=0.$$
Fixing $c=0$ and $b=1$ and varying $a$, we find that $f$ is linear. Similarly, we show that $g$, $i$ and $h$ are linear.
With $c=0$ and $b=1$, one deduces that $f=h$, and similarly one shows that $g=i$.
Thus, we have a pair $(\lambda,\mu)\in \K^2$ such that $f=\lambda \id_\K=h$ and $g=\mu \id_\K=i$,
and one checks that $F(M)=M \times \begin{bmatrix}
\lambda\\
\mu
\end{bmatrix}$ for all $M \in \calK_4$, whence $F$ is local.
\end{proof}

\subsection{Setting the induction up}\label{startinduction}

Now, we can set up our inductive proof of point (b) of Theorem \ref{classtheo2}.
Let $\calS$ be a linear subspace of $\calL(U,V)$ with $\codim \calS \leq 2n-3$ and $\# \K>2$.

In the case $n=2$, we have seen in Section \ref{n=2section} that either $\calS=\calL(U,V)$, in which case we know from Proposition \ref{espacetotal}
that every range-compatible homomorphism on $\calS$ is local, or $\calS$ has Type 1 or Type 2.

Now, we assume that $n>2$ and that
point (b) of Theorem \ref{classtheo2} holds for all target spaces $V'$ with $\dim V'=n-1$
and all linear subspaces $\calS'$ of $\calL(U,V')$ with $\codim \calS' \leq 2(n-1)-3$.
We make the following assumptions:
\begin{itemize}
\item[(A)] $\calS$ is neither of Type 1 nor of Type 3.
\item[(B)] There is a non-linear range-compatible homomorphism on $\calS$. We fix such a homomorphism and denote it by $F$.
\end{itemize}
The aim is to prove that $\calS$ is of Type 2.

Using point (a) of Proposition \ref{vecteurdim1}, we obtain the following consequence of assumptions (A)
and (B):

\begin{claim}\label{dimatleast2claim}
One has $\dim \calS x \geq 2$ for all non-zero vectors $x \in U$.
\end{claim}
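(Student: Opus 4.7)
The plan is to argue by contraposition: assume that some non-zero vector $x_0 \in U$ satisfies $\dim \calS x_0 \leq 1$, and derive a contradiction with one of the standing assumptions (A) or (B). Since Proposition \ref{vecteurdim1} applies whenever $\codim \calS \leq 2\dim V - 3$ and $\dim \calS x \leq 1$ for some non-zero $x$, this is essentially a direct invocation of that proposition, combined with the definition of Type 1.

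Split into the two possible values of $\dim \calS x_0$. If $\dim \calS x_0 = 1$, then by the very definition of Type 1 the space $\calS$ has Type 1, contradicting assumption (A). If instead $\calS x_0 = \{0\}$, then part (a) of Proposition \ref{vecteurdim1} yields that every range-compatible homomorphism on $\calS$ is local; in particular the fixed homomorphism $F$ is local, hence linear, contradicting assumption (B). Together these two cases rule out $\dim \calS x_0 \leq 1$, so that every non-zero $x \in U$ satisfies $\dim \calS x \geq 2$.

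There is essentially no obstacle here: the work has already been done in Proposition \ref{vecteurdim1}, and the claim is the exact translation of its content into the language of the running assumptions (A) and (B). The only thing worth double-checking is that the hypothesis $\codim \calS \leq 2\dim V - 3$ needed by the proposition is indeed part of our running hypotheses on $\calS$, which it is.
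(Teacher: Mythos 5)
Your proof is correct and is essentially the argument the paper intends: the case $\dim\calS x_0=1$ is excluded by assumption (A) via the definition of Type 1, and the case $\calS x_0=\{0\}$ is excluded by point (a) of Proposition \ref{vecteurdim1}, which makes $F$ local and hence linear, contradicting assumption (B). The codimension hypothesis needed for the proposition is indeed part of the standing assumptions, as you note.
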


Moreover, we obtain:
\begin{claim}\label{nonadaptedvectorsclaim}
The set of non-$\calS$-adapted vectors is included in a hyperplane of $V$.
\end{claim}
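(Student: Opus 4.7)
The plan is to appeal to the Adapted Vectors Lemma (Lemma \ref{adaptedvectorslemma}) and to eliminate each of the four exceptional configurations it produces, using assumptions (A), (B), and Claim \ref{dimatleast2claim} together with the classification results of Section \ref{specialn=3section}.

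If $p=1$, Corollary \ref{dimU=1} combined with the non-linearity of $F$ forces $\dim\calS=1$, which makes $\calS$ of Type~1 and contradicts (A); hence I may assume $p\geq 2$, so that Lemma \ref{adaptedvectorslemma} is applicable. That lemma asserts that either the desired conclusion already holds, or $n=3$ and, in suitable bases, $\calS$ is represented by one of
\[
\text{(i)}\ \{0\}\coprod\Mat_{3,p-1}(\K),\ \text{(ii)}\ \calK_1\coprod\Mat_{3,p-3}(\K),\ \text{(iii)}\ \calK_2\coprod\Mat_{3,p-2}(\K),\ \text{(iv)}\ \calK_3\coprod\Mat_{3,p-2}(\K).
\]

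I would then dispatch the four cases in turn. In case (i), the first standard basis vector $x_1\in\K^p$ satisfies $\calS x_1=\{0\}$, which contradicts Claim \ref{dimatleast2claim}. In case (iii), the same $x_1$ yields a line $\calS x_1\subset V$, so $\calS$ has Type~1 and is ruled out by assumption (A). In cases (ii) and (iv), Lemma \ref{specialn=3lemma} shows that every range-compatible homomorphism on $\calK_1$ (respectively $\calK_3$) is local, and Proposition \ref{espacetotal} gives the same for the $\Mat_{3,p-k}(\K)$ factor since $n=3>1$; the Splitting Lemma then forces every range-compatible homomorphism on $\calS$ to be local, hence linear, which contradicts (B).

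The main subtlety is not a technical obstruction but bookkeeping: one must recognize that only case (iii) falls under Type~1 (and is therefore disposed of by (A)), while cases (ii) and (iv) are eliminated via the preparatory work of Lemma \ref{specialn=3lemma} feeding into the Splitting Lemma. Once this allocation is understood, the claim follows by the short case check outlined above.
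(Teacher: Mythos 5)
Your proposal is correct and follows essentially the same route as the paper: assume the claim fails, invoke Lemma \ref{adaptedvectorslemma} to reduce to the four exceptional $n=3$ configurations, and eliminate each via Lemma \ref{specialn=3lemma}, Proposition \ref{espacetotal} and the Splitting Lemma (for $\calK_1$ and $\calK_3$) or via assumption (A) and Claim \ref{dimatleast2claim} (for $\calK_2$ and the zero-column case). The only cosmetic differences are that you dispose of case (i) through Claim \ref{dimatleast2claim} rather than through the locality argument, and that you explicitly check the $p=1$ degeneracy before applying the lemma; both are sound.
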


\begin{proof}
Assume that the contrary holds. Then, we know from Lemma \ref{adaptedvectorslemma}
that, for some non-negative integer $q$, the space $\calS$ is represented by $\{0\} \coprod \Mat_{3,q}(\K)$
or by $\calK_i \coprod \Mat_{3,q}(\K)$ for some $i \in \{1,2,3\}$.
As there is a non-local range-compatible homomorphism on $\calS$,
combining Lemma \ref{specialn=3lemma}, Proposition \ref{espacetotal} and the Splitting Lemma yields that
$\calS$ is represented, for some $q \geq 0$, by
$\calK_2 \coprod \Mat_{3,q}(\K)$ in well-chosen bases of $U$ and $V$.
This would show that $\calS$ has Type 1, contradicting assumption (A).
\end{proof}

To simplify the discourse on $\calS$-adapted vectors, we shall adopt the following definition:

\begin{Def}
Let $y \in V$ be an $\calS$-adapted vector.
We say that $y$ has Type 0 for $F$ whenever $F \modu y$ is local. \\
Given $i \in \{1,2,3\}$, we say that $y$ has Type $i$ for $F$ whenever $F \modu y$ is \emph{non-local}
and $\calS \modu y$ has Type $i$.
\end{Def}

It follows from our induction hypothesis that every $\calS$-adapted vector is of one of Types $0$ to $4$ for $F$.

\subsection{Additional key lemmas}

\begin{lemma}\label{intersectionlinearlemma}
Let $G : \calS \rightarrow V$ be a range-compatible homomorphism.
Let $V_1$ and $V_2$ be subspaces of $V$ such that $G \modu V_1$ and $G \modu V_2$ are linear.
Then, $G \modu (V_1\cap V_2)$ is linear. In particular, if $V_1 \cap V_2=\{0\}$, then $G$ is linear.
\end{lemma}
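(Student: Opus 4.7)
The plan is to unpack the definition of linearity for $G \modu V_1$ and $G \modu V_2$, translate both back to statements about $G$ itself, and intersect the resulting conditions.

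First, I would observe that by the Projection Lemma, the map $G \modu (V_1 \cap V_2)$ is already a well-defined range-compatible group homomorphism on $\calS \modu (V_1 \cap V_2)$, so the only thing left to prove is that it commutes with scalar multiplication. Denote by $\pi_1 : V \twoheadrightarrow V/V_1$, $\pi_2 : V \twoheadrightarrow V/V_2$ and $\pi : V \twoheadrightarrow V/(V_1 \cap V_2)$ the canonical projections, and recall that, by construction, $(G \modu V_i)(\pi_i \circ s) = \pi_i(G(s))$ for every $s \in \calS$ and $i \in \{1,2\}$, and likewise for $G \modu (V_1 \cap V_2)$ with $\pi$.

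Next, I would fix $\lambda \in \K$ and $s \in \calS$, and note the identity $\pi_i \circ (\lambda s) = \lambda (\pi_i \circ s)$. Applying the assumed linearity of $G \modu V_1$ yields $\pi_1(G(\lambda s)) = \lambda \pi_1(G(s))$, i.e.\ $G(\lambda s) - \lambda G(s) \in V_1$. The same argument with $V_2$ gives $G(\lambda s) - \lambda G(s) \in V_2$. Intersecting, $G(\lambda s) - \lambda G(s) \in V_1 \cap V_2$, so applying $\pi$ we obtain
$$(G \modu (V_1 \cap V_2))(\lambda (\pi \circ s)) - \lambda (G \modu (V_1 \cap V_2))(\pi \circ s) = \pi\bigl(G(\lambda s) - \lambda G(s)\bigr) = 0.$$
Since every element of $\calS \modu (V_1 \cap V_2)$ has the form $\pi \circ s$ for some $s \in \calS$, this shows that $G \modu (V_1 \cap V_2)$ is linear.

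The special case $V_1 \cap V_2 = \{0\}$ is immediate: then $\calS \modu \{0\}$ may be identified with $\calS$ and $G \modu \{0\}$ with $G$, whence $G$ itself is linear. There is no real obstacle here; the proof is purely a matter of chasing the definitions and using that the intersection of two vector subspaces containing $G(\lambda s) - \lambda G(s)$ also contains it.
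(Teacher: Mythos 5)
Your proof is correct and follows exactly the paper's argument: you show $G(\lambda s)-\lambda G(s)$ lies in both $V_1$ and $V_2$, hence in $V_1\cap V_2$, and conclude that $G \modu (V_1\cap V_2)$ is linear. The extra bookkeeping with the projections $\pi_1,\pi_2,\pi$ is just a more explicit unwinding of the same definition-chase.
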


\begin{proof}
Let $(\lambda,s)\in \K \times \calS$. The assumptions show that $G(\lambda s)-\lambda G(s)$ belongs to $V_1$ and to $V_2$,
and hence it belongs to $V_1 \cap V_2$. Thus, $G(\lambda s)=\lambda G(s) \mod V_1 \cap V_2$, which yields that $G \modu (V_1\cap V_2)$ is linear.
\end{proof}

With this basic result at hand, we immediately deduce:

\begin{claim}\label{atmostonelinearclaim}
Given linearly independent vectors $y_1$ and $y_2$ of $V$, one of the maps
$F \modu y_1$ and $F \modu y_2$ is non-linear.
\end{claim}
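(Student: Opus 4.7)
The plan is to observe that this claim is essentially an immediate contrapositive application of Lemma \ref{intersectionlinearlemma}, combined with assumption (B).

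Suppose for contradiction that both $F \modu y_1$ and $F \modu y_2$ are linear. I would apply Lemma \ref{intersectionlinearlemma} with $G := F$, $V_1 := \K y_1$ and $V_2 := \K y_2$: the hypotheses are met, and since $y_1$ and $y_2$ are linearly independent we have $V_1 \cap V_2 = \{0\}$. The ``in particular'' conclusion of Lemma \ref{intersectionlinearlemma} then yields that $F$ itself is linear. This contradicts assumption (B), under which $F$ was chosen to be a \emph{non-linear} range-compatible homomorphism on $\calS$. Hence at least one of $F \modu y_1$ and $F \modu y_2$ must be non-linear.

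There is no real obstacle here: the entire content of the claim is packaged in the previous lemma together with the standing setup. The only thing to be careful about is quoting the correct parts of the framework (Lemma \ref{intersectionlinearlemma} and assumption (B)) and noting explicitly that linear independence of $y_1, y_2$ gives $\K y_1 \cap \K y_2 = \{0\}$, so that the ``in particular'' clause applies verbatim.
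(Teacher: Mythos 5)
Your proof is correct and is exactly the argument the paper intends: the claim is stated as an immediate consequence of Lemma \ref{intersectionlinearlemma} applied with $V_1=\K y_1$, $V_2=\K y_2$ (which intersect trivially by linear independence), contradicting the non-linearity of $F$ from assumption (B). Nothing further is needed.
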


\begin{lemma}\label{3planeslemma}
Let $\calT$ be a linear subspace of $\calL(U,V)$ with $\codim \calT \leq 2\dim V-3$. \\
Let $G : \calT \rightarrow V$ be a range-compatible homomorphism.
Assume that there are three vectors $x_1,x_2,x_3$ of $U$
such that $\calT x_1,\calT x_2,\calT x_3$ are pairwise distinct $2$-dimensional subspaces of $V$
and $G \modu \calT x_i$ is linear for all $i \in \{1,2,3\}$.
Then, $G$ is local.
\end{lemma}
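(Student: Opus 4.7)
The plan is to show that $G$ is actually \emph{linear}, after which Theorem~\ref{maintheolin} will deliver the conclusion (this application being legitimate because in the intended use $\#\K>2$ and $\codim\calT\leq 2n-3=d_n(\K)$, where $n:=\dim V$). Write $V_i:=\calT x_i$ for brevity.

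Applying the Intersection Lemma~\ref{intersectionlinearlemma} iteratively to the three planes $V_i$, one obtains that $G\modu(V_1\cap V_2\cap V_3)$ is linear. Since the $V_i$ are pairwise distinct $2$-planes, $V_1\cap V_2\cap V_3$ is either $\{0\}$---in which case $G$ is already linear---or $1$-dimensional. In the second situation, a further application of the Intersection Lemma, this time to distinct $1$-dimensional intersections $V_i\cap V_j$ lying inside a common plane $V_k$, shows that either some pairwise meet is already trivial (so $G$ is again linear) or all three pairwise intersections coincide with a common line $L=\K y_0$. Thus the only delicate configuration to handle is the one in which the three $V_i$ are distinct $2$-planes all containing $L$ and meeting pairwise exactly along $L$.

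For this configuration, I would set up a basis $(e_1,\dots,e_n)$ of $V$ with $e_1=y_0$, and pick $v_1,v_2,v_3\in V$ whose classes in $V/L$ span the three distinct lines $V_i/L$. A short codimension check (were $x_3$ a combination of $x_1,x_2$, an extra linear relation of the form $\beta\gamma\,b_1(s)=\alpha\delta\,c_2(s)$ would be forced on $\calT$, pushing $\codim\calT$ above $2n-3$) shows that $x_1,x_2,x_3$ must be linearly independent in $U$. Imposing $s(x_i)\in V_i$ for all $s\in\calT$ then embeds $\calT$ into an explicit ``three prescribed columns'' matrix space $\calM\subset\Mat_{n,p}(\K)$ of codimension $3n-6$. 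Combined with $\codim\calT\leq 2n-3$, this forces $3n-6\leq 2n-3$, hence $n\leq 3$; since the hypotheses manifestly require $n\geq 3$, we are reduced to $n=3$ and $\calT=\calM$ exactly.

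The proof then finishes with a direct computation for $n=3$. The hypothesis that $G\modu V_i$ is linear for $i=1,2$ translates into the second and third coordinates $g_2,g_3$ of $G(s)$ being linear in $s$, while the first coordinate $g_1:\calT\to\K$ is a priori only a group homomorphism. Decomposing $\calT$ as the direct sum of its ``column $j$ only'' summands, one works on each summand separately: range-compatibility $G(s)\in\im s$ expresses $(g_1,g_2,g_3)(s)$ as a scalar multiple of the corresponding column, and comparison with the already-known linearity of $g_2,g_3$ pins down that scalar multiplier to be a single constant on each summand (any genuine additive cross-term in $g_1$ would produce an expression of the form $a^2/b$, forbidden by additivity of $g_1$). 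This yields $g_1$ linear on every summand, hence linear on all of $\calT$, so $G$ is linear, and Theorem~\ref{maintheolin} concludes. The main obstacle lies in this last step: the hypotheses of the lemma provide control on $G$ only modulo $L$, and extracting linearity of $g_1$ \emph{along} $L$ requires the explicit column-by-column exploitation of range-compatibility.
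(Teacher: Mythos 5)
There is a genuine gap: your codimension check that rules out the linearly dependent case is miscounted, and the case you discard is precisely the one that requires real work. Suppose $x_3=\alpha x_1+\beta x_2$ with $\alpha,\beta$ nonzero and the three planes $V_i$ pairwise meet along a common line $L$. The two constraints $s(x_1)\in V_1$ and $s(x_2)\in V_2$ impose codimension $2(n-2)$; the extra relation you exhibit (of the form $\alpha\delta\, b_1(s)=\beta\gamma\, c_2(s)$, coming from $s(x_3)=\alpha s(x_1)+\beta s(x_2)\in V_1+V_2$ being forced into the hyperplane $V_3$ of the $3$-dimensional space $V_1+V_2$) adds exactly \emph{one} more condition, for a total of $2(n-2)+1=2n-3$. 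That is not ``above $2n-3$''; it saturates the bound, so the configuration is fully compatible with the hypothesis $\codim\calT\leq 2n-3$. It is in fact realized: the space $\calK_4$ of Lemma~\ref{specialn=3lemma} (matrices $\begin{bmatrix} b & c\\ a & 0\\ 0 & a\end{bmatrix}$, with $x_1=e_1$, $x_2=e_2$, $x_3=e_1+e_2$) satisfies all the hypotheses of the lemma with $x_1,x_2,x_3$ linearly dependent and the three image planes sharing the common line $\K e_1$. Your argument therefore silently skips an entire case, which is exactly Case~2 of the paper's proof; there, one shows that $\calT$ equals the ``$\calK_4$ plus free columns'' space of codimension $2n-3$ and then invokes the explicit determinant computation of Lemma~\ref{specialn=3lemma} to conclude that every range-compatible homomorphism is local. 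Your proposal contains no substitute for that computation.

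The rest of your outline is essentially sound and parallels the paper. The reduction via Lemma~\ref{intersectionlinearlemma} to the case where the three planes share a common line, and the treatment of the linearly independent case (codimension $3(n-2)\leq 2n-3$ forcing $n=3$ and $\calT=\calD_1\coprod\calD_2\coprod\calD_3\coprod\Mat_{3,p-3}(\K)$), match the paper's Case~1; there the paper concludes directly by Corollary~\ref{dimU=1}, Proposition~\ref{espacetotal} and the Splitting Lemma rather than by your column-by-column computation, but your route would also work. To repair the proof you must restore the dependent case and handle the resulting $\calK_4$-type space explicitly (or by citing Lemma~\ref{specialn=3lemma}).
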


\begin{proof}
Set $P_i:=\calT x_i$ for convenience.
Assume first that $x_1,x_2,x_3$ are linearly independent. From the inequalities $\dim \calT x_i \leq 2$,
we deduce that $\codim \calT \geq 3(n-2)$. As $\codim \calT \leq 2n-3$, the only option is that
$n=3$, $\dim \calT x_i=2$ for all $i$, and $\calT$ is the space of all operators
$s \in \calL(U,V)$ such that $s(x_i) \in P_i$
for all $i$. In well-chosen bases of $U$ and $V$,
we see that $\calT$ is represented by $\calD_1 \coprod \calD_2 \coprod \calD_3 \coprod \Mat_{3,q}(\K)$,
where each $\calD_i$ is a $2$-dimensional subspace of $\Mat_{3,1}(\K)$, and $q=\dim U-3$.
Applying Lemma \ref{dimU=1}, Proposition \ref{espacetotal} and the Splitting Lemma, we deduce that $G$ is local.

In the rest of the proof, we assume that $x_1,x_2,x_3$ are linearly dependent.
Obviously, they are also pairwise non-colinear. As no generality is lost in multiplying each vector with a non-zero scalar,
we can assume that $x_3=x_1+x_2$. Then, $P_3 \subset P_1+P_2$.
Next, since $G \modu P_i$ is linear for all $i \in \{1,2,3\}$,
the map  $G \modu D$ is linear, where $D:=P_1 \cap P_2 \cap P_3$.
If $D=\{0\}$, this means that $G$ is linear, and point (a) of Theorem \ref{classtheo2} yields that $G$ is local.

Assume now that $D \neq \{0\}$, to the effect that $D$ has dimension $1$.
It follows that $P_1+P_2$ has dimension $3$. Finally, since $P_1$, $P_2$ and $P_3$ are distinct hyperplanes
of $P_1+P_2$ that all contain $D$, we can find a basis $(y_1,y_2,y_3)$ of $P_1+P_2$ in which
$y_1 \in D$, $y_2 \in P_1$, $y_3 \in P_2$ and $y_2+y_3 \in P_3$.
Let us extend both $(x_1,x_2)$ and $(y_1,y_2,y_3)$ into bases $\bfB$ and $\bfC$, respectively, of $U$ and $V$.
Denoting by $\calM$ the matrix space representing $\calT$ in those bases, we see that
every matrix of $\calM$ has its left $n \times 2$ submatrix of the form
$$\begin{bmatrix}
b & c \\
a & 0 \\
0 & a \\
[0]_{(n-3) \times 1} & [0]_{(n-3) \times 1}
\end{bmatrix} \quad \text{with $(a,b,c)\in \K^3$}.$$

However, the space of all matrices with this shape has codimension $2n-3$ in $\Mat_{n,p}(\K)$, whence it equals $\calM$.
It follows that $\calM =\calR \coprod  \Mat_{n,p-2}(\K)$, where $\calR$ is the space of all
$n \times 2$ matrices of the form
$\begin{bmatrix}
N \\
[0]_{n-3) \times 2}
\end{bmatrix}$ with $N \in \calK_4$.
From Lemma \ref{specialn=3lemma}, we know that every range-compatible homomorphism on $\calK_4$ is local,
whence, by the Embedding Lemma, this holds for $\calR$ as well. Using Proposition \ref{espacetotal} and the Splitting Lemma, we conclude that $G$ is local.
\end{proof}

\subsection{Completing the proof for fields of characteristic not $2$}

Using Lemma \ref{3planeslemma}, we shall now complete the proof for fields of characteristic not $2$
as a consequence of the following more general result:

\begin{claim}\label{adaptedtype2or3claim}
There is an $\calS$-adapted vector of Type 2 or 3 for $F$.
\end{claim}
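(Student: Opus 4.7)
The plan is to argue by contradiction: assume that no $\calS$-adapted vector has Type 2 or 3 for $F$, so every $\calS$-adapted vector has Type 0 or Type 1. The goal is to exhibit three $2$-dimensional subspaces $P_1, P_2, P_3 \subset V$ of the form $P_i = \calS x_i$ on which $F$ descends linearly, and apply Lemma \ref{3planeslemma} to force $F$ local, contradicting the non-linearity of $F$ (every local map being linear).

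First, locate the exceptional vectors. Let $L_F := \{y \in V : F \modu y \text{ is linear}\}$, which lies inside a $1$-dimensional subspace by Claim \ref{atmostonelinearclaim}, and let $H$ be a linear hyperplane of $V$ containing the non-$\calS$-adapted vectors, as provided by Claim \ref{nonadaptedvectorsclaim}. For every non-zero $y \in V \setminus (H \cup L_F)$, the vector $y$ is $\calS$-adapted and $F \modu y$ is non-linear, hence non-local; under our contradiction assumption, the induction hypothesis applied to $\calS \modu y$ leaves only the option that $y$ has Type 1 for $F$, meaning that $\calS \modu y$ has Type 1 and there exists $x_y \in U$ with $\dim(\calS \modu y) x_y = 1$.

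Next, derive from this Type 1 structure a $2$-plane $P_y \subset V$ over which $F$ descends linearly. The identity $\dim (\calS \modu y) x_y = \dim \calS x_y - \dim (\calS x_y \cap \K y)=1$, combined with Claim \ref{dimatleast2claim} ($\dim \calS x_y \geq 2$), forces $\dim \calS x_y = 2$ and $y \in P_y := \calS x_y$. Point (c) of Theorem \ref{classtheo2}, applied inductively to $\calS \modu y$, expresses $F \modu y$ as the sum of a local map and a map of the form $\pi \circ s \mapsto \alpha(\pi(s(x_y)))$, whose image lies in $P_y / \K y$. Projecting $V/\K y$ further onto $V/P_y$ annihilates the $\alpha$-summand, so $F \modu P_y$ coincides modulo $P_y$ with a local map, and in particular is linear.

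It remains to produce three distinct such $P_y$. Let $\calP := \{P_y : y \in V \setminus (H \cup L_F),\ y \neq 0\}$; by construction, $V \setminus (H \cup L_F) \subset \bigcup_{P \in \calP} P$ and $F \modu P$ is linear for every $P \in \calP$. If $|\calP| \geq 3$, pick three distinct $P_1, P_2, P_3 \in \calP$ with corresponding $x_i \in U$ satisfying $\calS x_i = P_i$; Lemma \ref{3planeslemma}, applied with $\calT = \calS$ and $G = F$, then yields that $F$ is local, the desired contradiction. If instead $|\calP| \leq 2$, then $V$ is contained in the union of the hyperplane $H$, the line $L_F$, and at most two $2$-dimensional subspaces, which a union-avoidance argument should rule out for $\#\K > 2$ and $\dim V \geq 3$. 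The main obstacle is exactly this last step: in the borderline case $\dim V = 3$, all of $H$ and the $P_i$ are hyperplanes, so one needs a careful incidence count to show that a hyperplane, a line, and two further hyperplanes of a $3$-dimensional space over a field with at least three elements cannot cover it -- this is precisely where the hypothesis $\#\K > 2$ enters.
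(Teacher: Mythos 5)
Your overall strategy is the same as the paper's: reduce to every $\calS$-adapted vector having Type 0 or 1, extract from each Type 1 vector $y$ a plane $P_y=\calS x_y$ of dimension $2$ containing $y$ on which $F$ descends linearly (your use of Claim \ref{dimatleast2claim} and of point (c) of Theorem \ref{classtheo2} here is correct), and feed three such planes into Lemma \ref{3planeslemma}. The problem is the step you yourself flag as the ``main obstacle'': you need $V$ not to be covered by $H \cup L_F \cup P_1 \cup P_2$, i.e.\ by \emph{four} proper subspaces. The standard covering lemma (Lemma 2.5 of \cite{dSPfeweigenvalues}, which is what the hypothesis $\#\K>2$ buys you) only forbids a cover by \emph{three} proper subspaces; over $\F_3$ a cover by four proper subspaces is possible in general (e.g.\ four hyperplanes through a common codimension-$2$ subspace). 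So as written the argument is incomplete. The gap is fillable --- since one of your four subspaces is only a line and $\dim V\geq 3$, an inclusion--exclusion count over $\F_3$ in the critical case $\dim V=3$ shows the union misses some vector --- but you have not carried this out, and it is exactly the nontrivial part.

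The paper closes this gap by a different organization that never needs four subspaces. It finds the planes \emph{sequentially} and uses Lemma \ref{intersectionlinearlemma}: once the first plane $P_1=\calS x_1$ with $F\modu P_1$ linear is in hand, any nonzero $y\notin P_1$ with $F\modu y$ linear satisfies $\K y\cap P_1=\{0\}$, so $F$ itself would be linear, a contradiction. Hence your set $L_F$ is automatically contained in $P_1$ and need not be avoided separately: one picks $y_2\notin H\cup P_1$ (two proper subspaces), gets $P_2$, then picks $y_3\notin H\cup P_1\cup P_2$ (three proper subspaces, which is where $\#\K>2$ is used), and gets $P_3$. I recommend restructuring your final step along these lines; alternatively, supply the explicit incidence count for a hyperplane, a line and two planes in a space of dimension at least $3$ over a field with at least $3$ elements.
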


\begin{proof}
Assume on the contrary that no such vector exists. By the induction hypothesis, we deduce that for every $\calS$-adapted vector $y$,
either $F \modu y$ has Type 1 or $F \modu y$ is local.

By Claim \ref{nonadaptedvectorsclaim}, one can find non-colinear $\calS$-adapted vectors.
Using Claim \ref{atmostonelinearclaim}, we deduce that at least one $\calS$-adapted vector
$y_1$ has Type $1$ for $F$.
Then, we recover a non-zero vector $x_1 \in U$ such that $\dim(\calS \modu y_1) x_1 = 1$
and $F \modu y_1$ is the sum of a local map and of a range-compatible homomorphism whose range is included in $(\calS \modu y_1) x_1$.

From Claim \ref{dimatleast2claim}, we deduce that $\dim \calS x_1=2$, and on the other hand we obtain
that $F \modu \calS x_1$ is linear. Note also that $y_1 \in \calS x_1$.

Next, we can choose an $\calS$-adapted vector $y_2$ that does not belong to $\calS x_1$.
If $F \modu y_2$ were linear, then, as $\K y_2 \cap \calS x_1=\{0\}$ and $F \modu \calS x_1$ is linear,
we would deduce from Lemma \ref{intersectionlinearlemma} that $F$ is linear, contradicting our assumptions.
Thus, $y_2$ has Type 1 for $F$.
As above, this yields a vector $x_2$ of $U$ such that $\dim \calS x_2=2$, $F \modu \calS x_2$ is linear
and $y_2 \in \calS x_2$. Note that $\calS x_1 \neq \calS x_2$.

Finally, we contend that there exists an $\calS$-adapted vector $y_3$ outside of $\calS x_1 \cup \calS x_2$.
Indeed, we can find a hyperplane $H$ of $V$ which contains all the non-$\calS$-adapted vectors,
and then, classically (see e.g.\ Lemma 2.5 of \cite{dSPfeweigenvalues}), the proper linear
subspaces $H$, $\calS x_1$ and $\calS x_2$ cannot cover $V$ since
$\K$ has more than $2$ elements.
Using such a vector $y_3$, we proceed as above and find a vector $x_3 \in U$ with $\dim \calS x_3=2$,
$\calS x_3 \neq \calS x_1$, $\calS x_3 \neq \calS x_2$, and $F \modu \calS x_3$ linear.
Then, using Lemma \ref{3planeslemma}, we conclude that $F$ is local, which contradicts assumption (B).
\end{proof}

As a consequence, we obtain:

\begin{claim}
The field $\K$ has characteristic $2$.
\end{claim}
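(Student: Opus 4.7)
The plan is to read off the claim almost immediately from Claim \ref{adaptedtype2or3claim}, which was just established, combined with the very definitions of Type 2 and Type 3. First I would invoke Claim \ref{adaptedtype2or3claim} to obtain an $\calS$-adapted vector $y \in V$ such that $\calS \modu y$ has Type 2 or Type 3 (with $F \modu y$ non-local, witnessing that the new space genuinely carries the structure listed in the Second Classification Theorem).

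Next I would recall the definitions from Section \ref{introsecondtheorem}: a subspace is declared to have \textbf{Type 2} (resp.\ \textbf{Type 3}) only when $\K$ has characteristic $2$ \emph{and} the space is equivalent to $\Mats_2(\K) \vee \Mat_{n-2,p-2}(\K)$ (resp.\ $\Mats_3(\K) \coprod \Mat_{3,p-3}(\K)$). Thus the characteristic-$2$ hypothesis is baked into these types by fiat. Since $\calS \modu y$ falls into one of the two, we conclude $\K$ has characteristic $2$.

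I do not anticipate any obstacle: all of the real work has been absorbed into Claim \ref{adaptedtype2or3claim} (which in turn relied on Lemma \ref{3planeslemma}, the Type~1 reductions, and the hyperplane-avoidance argument using $\#\K>2$). The present step is a bookkeeping consequence of how Types 2 and 3 were defined. Should one prefer to state it as a contradiction argument, one would simply observe that if $\K$ had characteristic not $2$, then no $\calS$-adapted vector could be of Type 2 or 3, directly contradicting Claim \ref{adaptedtype2or3claim}; hence the characteristic must be $2$.
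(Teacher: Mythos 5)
Your proof is correct and matches the paper's (implicit) argument exactly: the paper states this claim with no separate proof, precisely because Types 2 and 3 are by definition only possible over a field of characteristic $2$, so Claim \ref{adaptedtype2or3claim} immediately forces the conclusion. Nothing further is needed.
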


It follows in particular that $\# \K \geq 4$, which will be helpful in the remainder of the proof.
Note that, at this point, the proof of point (b) is complete for all fields of characteristic not $2$!

\subsection{Reduction to $\dim U=2$}

In this section, we shall show that the situation can be reduced to the one where $\dim U=2$.
As in Section \ref{linear2}, this involves the relationship between the set of $\calS$-adapted vectors and the orthogonal $\calS^\bot$ of $\calS$. Given a vector $y \in V$ such that $\dim \calS^\bot y>2$,
we see from identity \eqref{codimensionformula} that $\dim (\calS \modu y)<2(\dim V-1)-3$,
whence $y$ is $\calS$-adapted and it can be neither of Type 2 nor of Type 3 for $F$. This motivates the following definition:

\begin{Def}
A non-zero vector $y \in V$ is called \textbf{super-$\calS$-adapted} when
$\dim \calS^\bot y>2$.
\end{Def}

Our aim now is to show that the existence of such a vector would yield that every range-compatible homomorphism on $\calS$ is local.
This involves two steps:

\begin{claim}\label{structureofsuperSadaptedclaim}
Assume that some vector of $V$ is super-$\calS$-adapted. \\
If $n=3$, then there are at least two linearly independent super-$\calS$-adapted vectors. \\
If $n>3$, then the union of two $2$-dimensional linear subspaces of $V$ cannot contain all the super-$\calS$-adapted vectors.
\end{claim}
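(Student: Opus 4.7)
The plan is to extract from $y_0$ a non-zero $3$-homogeneous polynomial $P$ on $V$ that detects super-$\calS$-adaptedness, and then, in each of the two configurations, to force the restriction of $P$ to a carefully chosen $2$-plane to be a binary cubic with too many roots. For the polynomial, write $\widehat{y}:\calS^\bot\to U$ for $t\mapsto t(y)$, so that $y$ is super-$\calS$-adapted exactly when $\rk\widehat{y}\geq 3$; since $y\mapsto\widehat{y}$ is linear, its image $\calT:=\{\widehat{y}\mid y\in V\}$ is a linear subspace of $\calL(\calS^\bot,U)$ containing $\widehat{y_0}$ of rank at least $3$. Lemma \ref{homogeneouspolynomiallemma} applied to $\calT$, $r=3$ and $\widehat{y_0}$ provides a $3$-homogeneous polynomial $h$ on $\calT$ with $h(\widehat{y_0})\neq 0$, vanishing on operators of rank less than $3$; the pullback $P:y\mapsto h(\widehat{y})$ is a non-zero $3$-homogeneous polynomial on $V$ that vanishes on every non-super-$\calS$-adapted vector.

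For $n=3$, I would argue by contradiction and suppose that every super-$\calS$-adapted vector lies in $\K y_0$. Picking any $v\in V\setminus\K y_0$ and restricting $P$ to the $2$-plane $\Pi:=\Vect(y_0,v)$ yields a non-zero binary cubic, since $P(y_0)\neq 0$. Of the $\#\K+1$ projective directions of $\Pi$, only $\K y_0$ can be super-$\calS$-adapted, so $P|_\Pi$ must vanish on the other $\#\K$ directions; as $\#\K\geq 4$, this produces at least four projective roots for a non-zero binary cubic, contradicting the bound of three.

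For $n>3$, I would similarly suppose that all super-$\calS$-adapted vectors lie in $W_1\cup W_2$ with $\dim W_i=2$, so that $y_0\in W_1$ after relabelling. The strategy is to select $v$ outside the three proper subspaces $W_1$, $W_2$ and $\K y_0+W_2$ of $V$ (the last has dimension at most $3<n$, hence is indeed proper); a routine case-split on whether $y_0\in W_2$ then shows that $\Pi:=\K y_0+\K v$ meets $W_1\cup W_2$ precisely along $\K y_0$, whence $P|_\Pi$ is a non-zero binary cubic vanishing on all $\#\K$ directions of $\Pi$ distinct from $\K y_0$, giving the same contradiction.

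The main obstacle will be the existence of such a vector $v$ in the second case. For this I plan to invoke the classical covering lemma that a vector space over $\K$ is not the union of three of its proper subspaces as soon as $\#\K\geq 3$. This condition is available in our setting because the preceding step of the section has forced $\K$ to have characteristic $2$, while the theorem's standing assumption $\#\K>2$ then implies $\#\K\geq 4$; this very cardinality bound is also what makes the binary cubic counting decisive in both cases.
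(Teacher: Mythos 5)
Your proof is correct and rests on the same key ingredients as the paper's: Lemma \ref{homogeneouspolynomiallemma} applied with $r=3$ to the operators $\widehat{y} : t \mapsto t(y)$, producing a non-zero $3$-homogeneous polynomial that vanishes on every non-super-$\calS$-adapted vector, together with the bound $\# \K \geq 4$ available at this stage. The execution is organized a little differently. The paper first proves that no hyperplane of $V$ can contain all the super-$\calS$-adapted vectors (by arguing that the degree-$4$ product $\varphi\, h$, for $\varphi$ a linear form with kernel that hyperplane, cannot vanish identically); this settles the case $n=3$ and the case $P_1+P_2 \subsetneq V$, and only in the residual case $\dim V=4$ with $P_1 \oplus P_2=V$ does it restrict $h$ to a well-chosen plane and count projective zeros of a binary cubic. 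You instead run the plane-restriction argument uniformly: in each configuration you produce a $2$-plane through $y_0$ meeting the alleged locus of super-$\calS$-adapted vectors only along $\K y_0$, so that the restricted non-zero binary cubic acquires at least $\# \K \geq 4$ projective roots, which is one too many. Both routes are sound; yours has the mild advantage of avoiding the case split on $P_1+P_2$ and of deriving both statements from a single mechanism, at the cost of the explicit covering lemma for three proper subspaces (which the paper also invokes elsewhere).
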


\begin{proof}
To every vector $y \in V$, we assign the linear operator $\widehat{y} : t \in \calS^\bot \mapsto t(y)$.
Then, $y$ is super-$\calS$-adapted if and only if $\rk \widehat{y}>2$. Using Lemma \ref{homogeneouspolynomiallemma},
we obtain a non-zero $3$-homogeneous polynomial function
$h : V \rightarrow \K$ that vanishes at every vector of $V$ that is not super-$\calS$-adapted.

Assume first that some hyperplane $H$ of $V$ contains all the super-$\calS$-adapted vectors.
Then, choosing a linear form $\varphi$ on $V$ with kernel $H$, we would deduce that the
$4$-homogeneous polynomial function $y \mapsto \varphi(y) h(y)$ vanishes everywhere on $V$,
which cannot hold because $\K$ has more than $3$ elements and both polynomial functions $\varphi$ and $h$ are non-zero.
It follows that $V$ contains a basis of super-$\calS$-adapted vectors, which yields the first statement.

Now, let us only assume that $n>3$, and let us consider arbitrary $2$-dimensional linear subspaces $P_1$ and $P_2$ of $V$.
If $P_1+P_2 \subsetneq V$, then we can embed $P_1+P_2$ into a hyperplane of $V$:
the above proof then shows that at least one super-$\calS$-adapted vector does not belong to that hyperplane.
Thus, it only remains to deal with the case when $\dim V=4$ and
$P_1 \oplus P_2=V$. Assume that every super-$\calS$-adapted vector belongs to $P_1 \cup P_2$.
Pick a vector $y_1$ such that $h(y_1) \neq 0$; without loss of generality, we may assume that $y_1 \in P_1$;
then, we extend $y_1$ into a basis $(y_1,y_2)$ of $P_1$, and we choose a basis $(y_3,y_4)$ of $P_2$.
One sees that the $2$-dimensional space $P:=\Vect(y_1,y_2+y_3)$ intersects $P_2$ trivially and intersects $P_1$ along $\K y_1$.
Thus, every vector of $P \setminus \K y_1$ annihilates $h$. As there are at least four $1$-dimensional linear subspaces of $P$ that are different from $\K y_1$ (remember that $\# \K>3$), this would yield $h_{|P}=0$, contradicting the fact that
$h(y_1) \neq 0$. This contradiction concludes the proof.
\end{proof}

\begin{claim}
There is no super-$\calS$-adapted vector.
\end{claim}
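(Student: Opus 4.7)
I would argue by contradiction: suppose a super-$\calS$-adapted vector $y_0 \in V$ exists, and aim to show that $F$ must be local, violating assumption~(B). The key preliminary observation is a codimension bound: for every super-$\calS$-adapted $y$, formula \eqref{codimensionformula} yields
$$\codim(\calS \modu y) \leq (2n-3) - 3 = 2(n-1)-4,$$
strictly less than $2(n-1)-3$. Types~2 and~3 for $\calS \modu y$ would each force its codimension to equal exactly $2(n-1)-3$, so this strict inequality immediately rules them out. Applying the induction hypothesis on point~(b) of Theorem~\ref{classtheo2} to $\calS \modu y$ (viewed in $\calL(U, V/\K y)$) then shows that $F \modu y$ is local whenever $\calS \modu y$ additionally fails to have Type~1.

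In the case $n=3$, the bound reads $\codim(\calS \modu y)\leq 0$, so $\calS \modu y=\calL(U,V/\K y)$ and Proposition~\ref{espacetotal} immediately gives that $F\modu y$ is local for every super-$\calS$-adapted $y$. The proof of Claim~\ref{structureofsuperSadaptedclaim} in fact produces a full basis of $V$ made of super-$\calS$-adapted vectors, and Lemma~\ref{3vectorslemma} applied to three of them forces $F$ to be local, contradicting~(B).

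For $n>3$, the subtle case is when $\calS \modu y$ has Type~1: by definition some nonzero $x\in U$ satisfies $\dim(\calS\modu y)\,x\leq 1$, i.e.\ $\dim\calS x-\dim(\calS x\cap\K y)\leq 1$. Combined with Claim~\ref{dimatleast2claim} (which gives $\dim\calS x\geq 2$), this forces $\dim\calS x=2$ and $y\in\calS x$. Hence every ``bad'' super-$\calS$-adapted vector, i.e.\ one for which $F\modu y$ may fail to be local, lies in the union $\bigcup\calS x$ taken over those nonzero $x\in U$ satisfying $\dim\calS x=2$. The plan is to show that at most two such two-dimensional subspaces $\calS x$ can host bad vectors, the bound being enforced by assumption~(A) (no Type~1 for $\calS$) together with the structural constraints on the non-linear part of $F\modu y$ coming from point~(c) of Theorem~\ref{classtheo2}, Proposition~\ref{vecteurdim1}, and Lemma~\ref{intersectionlinearlemma}. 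Once this confinement is established, Claim~\ref{structureofsuperSadaptedclaim} provides at least one super-$\calS$-adapted vector outside the union; a further polynomial non-vanishing argument in the spirit of the proof of that claim (valid since $\#\K\geq 4$) upgrades this to three linearly independent super-$\calS$-adapted vectors of Type~0 for $F$, and Lemma~\ref{3vectorslemma} then delivers the contradiction that $F$ is local.

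The main obstacle will be the confinement step: proving that the a priori large family of two-dimensional subspaces $\calS x$ hosting Type~1 super-$\calS$-adapted vectors in fact reduces to at most two. I expect to exploit the fact that $F$ is a homomorphism in characteristic~$2$, so that the non-linear part of each $F\modu y$ is governed by a root-linear form in the spirit of Theorem~\ref{symmetrictheo}, combined with compatibility relations between the various $F\modu y$'s obtained from vectors $y$ lying in the intersection of two candidate subspaces $\calS x$ and $\calS x'$.
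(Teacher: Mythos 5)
Your handling of $n=3$ is correct: super-adaptedness forces $\codim(\calS\modu y)=0$, so $F\modu y$ is local by Proposition~\ref{espacetotal}, and a basis of super-$\calS$-adapted vectors (which the proof of Claim~\ref{structureofsuperSadaptedclaim} indeed supplies) yields the contradiction. (The paper gets there slightly faster: two non-colinear Type-0 vectors already contradict Claim~\ref{atmostonelinearclaim}, no need for three and Lemma~\ref{3vectorslemma}.) Your setup for $n>3$ is also right: the codimension count does exclude Types 2 and 3 for $\calS\modu y$, and a Type-1 super-adapted vector $y$ does produce a $2$-dimensional plane $\calS x$ containing $y$ with $F\modu\calS x$ linear, via point (c) of Theorem~\ref{classtheo2} and Claim~\ref{dimatleast2claim}.

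The genuine gap is the step you yourself flag as ``the main obstacle'': the confinement of the bad planes. The mechanism you propose for it --- root-linearity in characteristic $2$ and compatibility relations between the various $F\modu y$ on intersections of planes --- is not what makes this work, and I see no way to push it through. The tool the paper built for exactly this purpose is Lemma~\ref{3planeslemma}: as soon as three \emph{pairwise distinct} planes $\calS x_1,\calS x_2,\calS x_3$ with each $F\modu\calS x_i$ linear have been produced, $F$ is local, contradicting (B). One then simply reruns the case analysis of Claim~\ref{adaptedtype2or3claim} with ``super-$\calS$-adapted'' in place of ``$\calS$-adapted'': a first Type-1 vector exists (two non-colinear Type-0 vectors would contradict Claim~\ref{atmostonelinearclaim}), giving $\calS x_1$; any super-adapted $y_2\notin\calS x_1$ must again be Type 1 (a Type-0 one would make $F$ linear by Lemma~\ref{intersectionlinearlemma}, since $\K y_2\cap\calS x_1=\{0\}$), giving $\calS x_2\neq\calS x_1$; and the second statement of Claim~\ref{structureofsuperSadaptedclaim} supplies a super-adapted $y_3\notin\calS x_1\cup\calS x_2$, hence a third distinct plane, and Lemma~\ref{3planeslemma} concludes. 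Note also that your endgame is heavier than necessary: you never need three linearly independent Type-0 vectors and Lemma~\ref{3vectorslemma}; once a single linear plane $\calS x_1$ is in hand, one Type-0 super-adapted vector off that plane already forces $F$ linear via Lemma~\ref{intersectionlinearlemma}.
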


\begin{proof}
Assume on the contrary that there is a super-$\calS$-adapted vector.
If $n=3$, then for every such vector $y$, we see that
$\dim(\calS \modu y)=2p$, whence
$\calS \modu y=\calL(U,V/\K y)$ and $F\modu y$ must be local;
however, by Claim \ref{structureofsuperSadaptedclaim}, there are at least two non-colinear super-$\calS$-adapted vectors,
whence Claim \ref{atmostonelinearclaim} yields a contradiction.

Assume now that $n>3$. For every super-$\calS$-adapted vector $y$,
we know that either $F \modu y$ is local or $\calS \modu y$ has Type 1.
Thus, we can adapt the proof of Claim \ref{adaptedtype2or3claim} by replacing $\calS$-adapted vectors with super-$\calS$-adapted vectors.
The line of reasoning works in this context because of the second result in Claim \ref{structureofsuperSadaptedclaim}.
Then, we obtain that the assumptions of Lemma \ref{3planeslemma} are satisfied: this yields that $F$ is local,
contradicting our basic assumptions.
\end{proof}

With the same line of reasoning, one also obtains:

\begin{claim}
If $\codim \calS<2n-3$, then no vector $y \in V$ satisfies
$\dim \calS^\bot y >1$.
\end{claim}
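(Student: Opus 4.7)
The plan is to closely mirror the proof of the preceding claim (that no super-$\calS$-adapted vector exists), with two adjustments to accommodate the strengthened hypothesis $\codim \calS\leq 2n-4$. First, I would replace ``super-$\calS$-adapted'' by the weaker notion of \emph{strongly adapted} vectors, defined by $\dim \calS^\bot y\geq 2$. Second, thanks to the improved bound on $\codim \calS$, formula \eqref{codimensionformula} yields $\codim(\calS\modu y)\leq 2n-6<2(n-1)-3$ for every strongly adapted $y$. Since Types 2 and 3 require codimension equal to $2(n-1)-3$ in the ambient space, this rules them out for $\calS\modu y$.

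Arguing by contradiction, I assume that some $y_0\in V$ satisfies $\dim \calS^\bot y_0\geq 2$, i.e.\ that the operator $\widehat{y_0}:t\in \calS^\bot\mapsto t(y_0)$ has rank at least $2$. Lemma \ref{homogeneouspolynomiallemma} then produces a $2$-homogeneous polynomial function $h$ on $V$ with $h(y_0)\neq 0$ that vanishes at every vector that is not strongly adapted. I would next establish the analogs of the two statements of Claim \ref{structureofsuperSadaptedclaim}: when $n=3$, two linearly independent strongly adapted vectors exist; when $n>3$, the strongly adapted vectors are not contained in the union of any two $2$-dimensional subspaces of $V$. The arguments are those of Claim \ref{structureofsuperSadaptedclaim}, with the degrees of all polynomials involved dropped by one. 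For instance, if all strongly adapted vectors lay in a hyperplane $\Ker \varphi$, the product $\varphi\cdot h$ would be a nonzero polynomial function of degree $3$ vanishing on $V$, contradicting $\# \K\geq 4$; and in the case $P_1\oplus P_2=V$ (which forces $n=4$), the argument reduces to the elementary fact that a nonzero binary quadratic form has at most two projective roots, so it cannot vanish on more than two of the $\# \K+1\geq 5$ lines through the origin of any $2$-dimensional plane.

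With these two structural results at hand, I would conclude as follows. If $n=3$, every strongly adapted vector $y$ satisfies $\codim(\calS\modu y)\leq 0$, so $\calS\modu y=\calL(U,V/\K y)$; Proposition \ref{espacetotal} then forces $F\modu y$ to be local, hence linear. Two linearly independent strongly adapted vectors would thus contradict Claim \ref{atmostonelinearclaim}. If $n>3$, the induction hypothesis combined with the codimension bound shows that every strongly adapted vector $y$ has Type $0$ or Type $1$ for $F$; repeating the proof of Claim \ref{adaptedtype2or3claim} verbatim, but with ``strongly adapted'' substituted for ``$\calS$-adapted'' throughout, one produces three vectors fulfilling the hypotheses of Lemma \ref{3planeslemma}, which then forces $F$ to be local and contradicts assumption (B).

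The main obstacle is to carefully verify that the degree reduction in $h$ still leaves enough room in the polynomial-vanishing arguments of Claim \ref{structureofsuperSadaptedclaim}, since the margin coming from $\# \K\geq 4$ becomes tighter. Once this is checked, the remainder of the proof is a faithful repetition of the preceding claim.
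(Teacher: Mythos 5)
Your proposal is correct and is essentially the paper's own argument: the paper proves this claim with the single phrase ``with the same line of reasoning,'' and you have correctly unwound that reasoning, replacing the rank-$3$ condition and $3$-homogeneous polynomial by a rank-$2$ condition and a $2$-homogeneous polynomial, and checking that the lowered degrees still fit within the bound $\#\K\geq 4$ (indeed more comfortably than in Claim~\ref{structureofsuperSadaptedclaim} itself). The key observation that $\codim(\calS\modu y)\leq 2n-6<2(n-1)-3$ excludes Types 2 and 3 for $\calS\modu y$ is exactly what makes the transfer work.
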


It follows that all the operators in the dual operator space $\widehat{\calS^\bot}$ have rank at most $2$ and,
if $\codim \calS<2n-3$, they all have rank at most $1$.
In any case, it is worthwhile to note that there is no non-zero vector of $\calS^\bot$ at which all the operators of $\widehat{\calS^\bot}$
vanish, as this would yield a non-zero operator $t$ in $\calS^\bot$ for which $t(y)=0$ for all $y \in V$.
Now, we can prove:

\begin{claim}
One has $\codim \calS=2n-3$.
\end{claim}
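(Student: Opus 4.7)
The plan is to proceed by contradiction, assuming $\codim \calS < 2n-3$. From the immediately preceding claim we then have $\rk \widehat{y} \leq 1$ for every $y \in V$, so that $\widehat{\calS^\bot}$ is a subspace of $\calL(\calS^\bot, U)$ consisting entirely of operators of rank at most $1$. Moreover, as already remarked, no non-zero $t \in \calS^\bot$ can satisfy $t(y) = 0$ for all $y$, so the common kernel of the family $\{\widehat{y}\}_{y \in V}$ is trivial.

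The next step is to invoke the standard classification of spaces of rank-$\leq 1$ operators on $\widehat{\calS^\bot}$. This forces exactly one of two scenarios: either (i) all $\widehat{y}$ share a common $1$-dimensional image $D \subset U$, i.e.\ every $t \in \calS^\bot$ has its range contained in $D$; or (ii) there is a hyperplane of $\calS^\bot$ annihilated by every $\widehat{y}$, which, by the triviality of the common kernel, forces $\dim \calS^\bot \leq 1$, hence $\codim \calS \leq 1$.

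In scenario (ii), we have $\codim \calS \leq 1 \leq n-2$ (using $n \geq 3$), so Theorem \ref{classtheo1} yields that $F$ is local, contradicting assumption (B). In scenario (i), picking a non-zero vector in $D$ and extending it into a basis of $U$ shows that, in suitable bases, $\calS$ is represented by $\calD \coprod \Mat_{n,p-1}(\K)$ for some linear subspace $\calD \subset \K^n$. If $\dim \calD = 1$, then the first basis vector $x$ of $U$ satisfies $\dim \calS x = 1$, so $\calS$ is of Type 1, contradicting assumption (A). Otherwise $\dim \calD \neq 1$, and Corollary \ref{dimU=1} (applied to $\calD$ regarded as an operator space with source of dimension $1$) guarantees that every range-compatible homomorphism on $\calD$ is local; combining this with Proposition \ref{espacetotal} applied to $\Mat_{n,p-1}(\K)$ and the Splitting Lemma shows that $F$ itself is local, again contradicting (B). Either alternative yields a contradiction, forcing $\codim \calS = 2n - 3$.

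The only substantive step is the invocation of the classical structure theorem for rank-$\leq 1$ operator spaces; the rest of the argument is bookkeeping, tracking how each alternative is blocked either by hypothesis (A) (Type 1 is forbidden) or by hypothesis (B) (non-linearity precludes everything being local), with Theorem \ref{classtheo1}, Corollary \ref{dimU=1}, Proposition \ref{espacetotal} and the Splitting Lemma doing the actual work. No new machinery is required at this stage.
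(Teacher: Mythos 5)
Your proof is correct and follows essentially the same route as the paper: assume $\codim\calS<2n-3$, note that all operators $\widehat{y}$ have rank at most $1$ with trivial common kernel, apply the classification of rank-$\leq 1$ operator spaces, and derive a contradiction with (A) or (B) in each resulting case. You are in fact slightly more careful than the paper, which silently discards the common-kernel alternative and rules out $\dim\calD\leq 1$ via Claim \ref{dimatleast2claim} rather than via Type 1; both variants are valid.
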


\begin{proof}
Assume on the contrary that $\codim \calS<2n-3$.
We have shown that every non-zero operator in $\widehat{\calS^\bot}$ has rank $1$ and that
no non-zero vector of $\calS^\bot$ is annihilated by all the operators of $\widehat{\calS^\bot}$.
Using the classification theorem for vector spaces of linear operators with rank at most $1$,
we deduce that all the non-zero operators in $\widehat{\calS^\bot}$ have the same range $D$.
Thus, the range of every non-zero element of $\calS^\bot$ is $D$.
Then, in well-chosen bases of $U$ and $V$, we see that $\calS$ is represented by the matrix space
$\calD \coprod \Mat_{n,p-1}(\K)$ for some linear subspace $\calD$ of $\K^n$.
By Claim \ref{dimatleast2claim}, we find that $\dim \calD \geq 2$, whence Theorem \ref{classtheo1} shows
that every range-compatible homomorphism on $\calS$ is local, contradicting our assumptions on $F$.
\end{proof}

Now, we shall apply to $\widehat{\calS^\bot}$ the classification theorem for spaces of linear operators with rank at most $2$
(see \cite{AtkLloydPrim}):

\begin{theo}[Atkinson, Lloyd]\label{classrank2}
Let $V_1$ and $V_2$ be finite-dimensional vector spaces over a field $\K$ with more than $2$ elements.
Let $\calT$ be a linear subspace of $\calL(V_1,V_2)$ in which every operator has rank at most $2$.
Set $p:=\dim V_2$ and $n:=\dim V_1$.
Then, one of the following cases must hold:
\begin{enumerate}[(i)]
\item All the operators in $\calT$ vanish everywhere on some common $(n-2)$-dimensional subspace of $V_1$.
\item Some $2$-dimensional subspace of $V_2$ contains the range of every operator in $\calT$.
\item There is a hyperplane $H$ of $V_1$ and a $1$-dimensional subspace $D$ of $V_2$ such that every operator
$u \in \calT$ maps $H$ into $D$.
\item In some bases of $V_1$ and $V_2$, the operator space
$\calT$ is represented by the space of all matrices of the form
$$\begin{bmatrix}
A & [0]_{3 \times (n-3)} \\
[0]_{(p-3) \times 3} & [0]_{(p-3) \times (n-3)}
\end{bmatrix} \quad \text{with $A \in \Mata_3(\K).$}$$
\end{enumerate}
\end{theo}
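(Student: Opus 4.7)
The plan is to prove this classification, due to Atkinson and Lloyd \cite{AtkLloydPrim}, by a case analysis driven by the interaction between the rank-$2$ operators of $\calT$. First I would perform a reduction to a primitive situation: after quotienting out the common kernel $\bigcap_{u \in \calT} \Ker u$ and restricting the codomain to the sum $\sum_{u \in \calT} \im u$ of the ranges, any structural conclusion will reinflate to one of (i)--(iv) (a large common kernel produces (i), a small total range produces (ii)). If every operator of $\calT$ has rank at most $1$, the standard classification of rank-$1$ operator spaces -- either all ranges contained in a common line, or all kernels containing a common hyperplane -- immediately places us in case (ii) or case (iii).

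The main step is a pencil analysis around a rank-$2$ pivot. Fix $u_0 \in \calT$ of rank exactly $2$ and choose bases of $V_1$ and $V_2$ adapted to $u_0$ so that $u_0$ is represented by the matrix $\begin{bmatrix} I_2 & [0]_{2 \times (n-2)} \\ [0]_{(p-2)\times 2} & [0]_{(p-2)\times(n-2)} \end{bmatrix}$. In this block decomposition every $v \in \calT$ writes as $v = \begin{bmatrix} A & B \\ C & D \end{bmatrix}$ with $A \in \Mat_2(\K)$, and the requirement $\rk(u_0 + \lambda v) \leq 2$ for all $\lambda \in \K$ translates via Schur complements into the polynomial identity
\[
D \det(I_2 + \lambda A) = \lambda\, C \cdot \operatorname{adj}(I_2+\lambda A) \cdot B
\]
in the indeterminate $\lambda$. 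Because $\#\K > 2$, we have enough evaluation points to extract coefficient-wise identities; one first obtains $D = 0$ and $CB = 0$, and then finer compatibility relations between $A$, $B$, and $C$. Varying the pivot $u_0$ over rank-$2$ operators and combining the resulting relations should force a dichotomy: if the spaces of $B$-blocks have a common $1$-dimensional column range one ends up in case (iii); if the $C$-blocks have a common hyperplane as row kernel one ends up in case (i); if both $B$ and $C$ blocks vanish identically then case (ii) holds; and the remaining primitive situation is case (iv).

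The subtle obstacle will be isolating case (iv). This is the primitive configuration in which neither the $B$-blocks nor the $C$-blocks degenerate, and where the polynomial identities above force an alternating pairing between $B$ and $C^T$. I would try to establish it by pairing each $v \in \calT$ with its transpose-like dual and showing that the induced bilinear form on a common $3$-dimensional subspace is alternating; then a simultaneous basis adapted to that alternating pairing puts $\calT$ in the form $\Mata_3(\K)$ padded by zeros with $n = p = 3$. The hypothesis $\#\K > 2$ is essential throughout, both to argue from polynomial identities and to invoke the clean classification of rank-$1$ spaces; characteristic $2$ and the sporadic symmetric phenomena encountered in Section \ref{symmetricsection} do not intrude here because one only needs to examine rank-$2$ pencils, which are stable under the passage to characteristic $2$ as long as $\#\K \geq 3$.
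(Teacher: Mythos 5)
This statement is not proved in the paper at all: it is quoted as a known theorem of Atkinson and Lloyd, with a citation to their article \emph{Primitive spaces of matrices of bounded rank}, and is used as a black box in the proof of point (b) of the Second Classification Theorem. So there is no in-paper argument to compare yours against; what has to be judged is whether your sketch would stand on its own as a proof of the Atkinson--Lloyd classification. It would not, for two reasons. First, the two steps that carry all the difficulty are announced rather than executed: you write that combining the block relations over varying pivots ``should force a dichotomy'', and that for the exceptional case (iv) you ``would try'' to exhibit an alternating pairing. That is precisely where the content of the theorem lies --- ruling out all intermediate configurations between the degenerate cases (i)--(iii) and the single sporadic alternating space --- and the original proof devotes most of its length to exactly this combinatorial analysis. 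As written, your text is a plausible plan, not a proof.

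Second, there is a concrete technical gap in the pencil step over $\F_3$. The Schur-complement identity $D\det(I_2+\lambda A)=\lambda\, C\operatorname{adj}(I_2+\lambda A)B$ is only derived for those $\lambda$ with $I_2+\lambda A$ invertible; since $\det(I_2+\lambda A)$ can have two roots in $\K$, over a field with exactly $3$ elements you may be left with a single admissible value of $\lambda$, which is not enough to identify two polynomials of degree $2$. The same problem appears if you work instead with the $3\times 3$ minors of $u_0+\lambda v$, which are cubics in $\lambda$ and can vanish at all three points of $\F_3$ without vanishing identically (e.g.\ $\lambda^3-\lambda$). The standard fix is to homogenize: consider $\mu u_0+\lambda v$ and observe that a nonzero homogeneous cubic in $(\mu,\lambda)$ has at most three zeros in $\Pgros^1(\K)$, while $\Pgros^1(\F_3)$ has four points. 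Without some such device your claim that $\#\K>2$ ``gives enough evaluation points'' is false at the borderline cardinality, which is exactly the case the hypothesis is designed to include. A minor further slip: case (iv) does not require $n=p=3$; the alternating block sits inside an ambient space of arbitrary dimensions, padded by zero rows and columns.
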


From there, we look at each case separately.
As $n \geq 3$ and no non-zero vector of $\calS^\bot$ is annihilated by all the operators of $\widehat{\calS^\bot}$,
Case (i) is ruled out.

Assume that Case (iii) holds. As $\dim \calS^\bot=2n-3 \geq n$, we obtain an $(n-1)$-dimensional subspace
$H$ of $\calS^\bot$ and a $1$-dimensional subspace $D$ of $U$ such that
$\im t \subset D$ for all $t \in H$.
Choosing a non-zero vector $x$ of $D$, it follows that $\dim \calS x \leq 1$, contradicting Claim \ref{dimatleast2claim}.

Assume now that Case (iv) holds. Again, as no non-zero vector of $\calS^\bot$
is annihilated by all the operators of $\widehat{\calS^\bot}$, it is necessary that
$\dim \calS^\bot=3$, whence $n=3$.
Then, we can find bases of $\widehat{\calS}$ and $U$ in which $\widehat{\calS^\bot}$ is represented by the space of all matrices
of the form
$$\begin{bmatrix}
A \\
[0]_{(p-3) \times 3}
\end{bmatrix} \quad \text{with $A \in \Mata_3(\K).$}$$
Then, a straightforward computation shows that, in well-chosen bases of $V$ and $U$, the space $\calS^\bot$
is also represented by the space of all matrices of the form
$$\begin{bmatrix}
A \\
[0]_{(p-3) \times 3}
\end{bmatrix} \quad \text{with $A \in \Mata_3(\K).$}$$
Then, in the same bases, $\calS$ is represented by $\Mats_3(\K) \coprod \Mat_{3,p-3}(\K)$.
In other words, $\calS$ has Type 3, which has been ruled out from the start.

We conclude that $\widehat{\calS^\bot}$ must fall into Case (ii) from Theorem \ref{classrank2}.
This yields a $2$-dimensional subspace $U_0$ of $U$ which contains $t(y)$ for all $t \in \calS^\bot$ and $y \in V$.
Choosing a basis $\bfB$ of $U$ in which the first two vectors span $U_0$, and choosing an arbitrary basis $\bfC$ of $V$,
we obtain that the space of matrices that represents $\calS$ in those bases splits as
$\calT \coprod \Mat_{n,p-2}(\K)$ for some linear subspace $\calT$ of $\Mat_{n,2}(\K)$ with
$\dim \calT=3$. Then, we can split $F_{\bfB,\bfC}=f \coprod g$, where $f$ and $g$ are range-compatible homomorphisms,
respectively, on $\calT$ and $\Mat_{n,p-2}(\K)$. By Proposition \ref{espacetotal}, the map $g$ is local, whence
$f$ is non-local. However, if $\calT$ has Type 1, then the same holds for $\calS$; moreover, $\calT$ cannot have Type 3
since the matrices in $\calT$ have only two columns. It follows that our basic assumptions are satisfied by $\calT$.

Thus, we only need to deal with the case $\dim U=2$ as, if point (b) of Theorem \ref{classtheo2} holds in that context,
then applying it to $\calT$ yields that $\calT$ has Type 2, and from there it is obvious that $\calS$ has Type 2 itself.

Therefore, in the rest of the proof, we shall consider only the case when $p=2$.

\subsection{The case $\dim U=2$ for fields of characteristic $2$}

In this last section, we assume that $p=2$. Note that $\dim \calS=3$.
First of all, we further reduce the situation to the one where $\dim V=3$.
Indeed, we know from Claim \ref{adaptedtype2or3claim}
that there is an $\calS$-adapted vector $y_0 \in V$ with Type 2 or $3$ for $F$.
As $p=2$, the vector $y_0$ cannot have Type 3 for $F$. Then, we deduce that there are bases of $U$ and $V$ in which every
matrix representing an operator in $\calS$ can be written as
$$\begin{bmatrix}
S \\
[?]_{1 \times 2} \\
[0]_{(n-3) \times 2} \\
\end{bmatrix} \quad \text{for some $S \in \Mats_2(\K)$.}$$
Then, we have a $3$-dimensional subspace $\calT$ of $\Mat_{3,2}(\K)$
such that $\calS$ is represented, in well-chosen bases, by the space of all matrices of the form
$\begin{bmatrix}
N \\
[0]_{(n-3) \times 2}
\end{bmatrix}$ with $N \in \calT$. From there, it is obvious that $\calT$ satisfies assumptions (A) and (B);
if we prove that $\calT$ has Type 2, then it will follow that $\calS$ has Type 2.

Therefore, until the end of the proof, we need only consider the case when $\dim V=3$.
Remember that we have an $\calS$-adapted vector $y_0 \in V$ with Type 2 for $F$.
By adding to $F$ a well-chosen local map, we can assume:

\begin{itemize}
\item[(C)] $F \modu y_0$ is root-linear and non-zero.
\end{itemize}

Before we can move forward, we need a few extra results on the range-compatible homomorphisms on $\Mats_2(\K)$.

\begin{lemma}\label{rootlinnonzerolemma}
Let $G : \Mats_2(\K) \rightarrow \K^2$ be a range-compatible homomorphism.
Let $y \in \K^2 \setminus \{0\}$.
Then, $G$ is root-linear and non-zero if and only if $G \modu y$ is root-linear and non-zero. \\
Moreover $G$ is root-linear if and only if there is a root-linear form $\alpha$ on $\K$ such that
$$\forall M \in \Mats_2(\K), \quad G(M)=\begin{bmatrix}
\alpha(m_{1,1}) \\
\alpha(m_{2,2})
\end{bmatrix}.$$
\end{lemma}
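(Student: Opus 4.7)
The plan is to apply Theorem~\ref{symtheorepeated} (in the case $n=2$) to parametrize every range-compatible homomorphism $G : \Mats_2(\K) \rightarrow \K^2$ in the canonical form $G(M) = MZ + D_\alpha(M)$, where $Z \in \K^2$, $\alpha$ is a root-linear form on $\K$, and $D_\alpha(M) := \begin{bmatrix} \alpha(m_{1,1}) \\ \alpha(m_{2,2}) \end{bmatrix}$. When $\K$ has characteristic not $2$, applying the defining identity of a root-linear map with $\lambda=-1$ forces $\alpha=0$, and both equivalences of the lemma become trivial; I will therefore focus on the case of characteristic $2$.

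The central observation is that $D_\alpha(\lambda^2 M) = \lambda\, D_\alpha(M)$ for all $(\lambda,M)$, which yields the identity
\[
G(\lambda^2 M) - \lambda\, G(M) = (\lambda^2 - \lambda)\, MZ \qquad \text{for every } (\lambda, M) \in \K \times \Mats_2(\K).
\]
This immediately settles the ``moreover'' part of the lemma: $G$ is root-linear exactly when the right-hand side vanishes identically; since $\#\K > 2$, one may choose $\lambda \in \K \setminus \{0,1\}$, for which $\lambda^2 - \lambda \neq 0$, and then taking $M = I_2$ gives $Z = 0$, whereas the converse is immediate since $G = D_\alpha$ is visibly root-linear.

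The forward direction of the first equivalence is then straightforward: if $G = D_\alpha$ with $\alpha \neq 0$, choosing $x \in \K$ with $\alpha(x) \neq 0$ makes $G(xE_{1,1}) = \alpha(x)\, e_1$ and $G(xE_{2,2}) = \alpha(x)\, e_2$ linearly independent, hence they cannot both lie in the line $\K y$, so $G \modu y$ is non-zero (while being root-linear as the composite of a root-linear map with a linear projection). For the backward direction, projecting the displayed identity through $\pi : \K^2 \twoheadrightarrow \K^2/\K y$ and using the assumed root-linearity of $G \modu y$, any choice of $\lambda \in \K \setminus \{0,1\}$ yields $MZ \in \K y$ for every $M \in \Mats_2(\K)$. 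The main (though modest) hurdle is to conclude that $Z = 0$ from this: writing $Z = z_1 e_1 + z_2 e_2$ and applying the constraint to the three symmetric matrices $E_{1,1}$, $E_{2,2}$ and $E_{1,2} + E_{2,1}$, we learn that $z_1 e_1$, $z_2 e_2$ and $z_2 e_1 + z_1 e_2$ all belong to the $1$-dimensional line $\K y$; a short case analysis on which of $z_1, z_2$ vanishes (the cases $z_1 \neq 0 \neq z_2$ and ``exactly one of them is non-zero'' both producing contradictions) forces $Z = 0$, so $G = D_\alpha$ is root-linear, and the non-vanishing of $G$ is an immediate consequence of that of $G \modu y$.
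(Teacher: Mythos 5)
Your proof is correct and follows essentially the same route as the paper: both rest on the parametrization $G(M)=MZ+\begin{bmatrix}\alpha(m_{1,1})\\ \alpha(m_{2,2})\end{bmatrix}$ supplied by Theorem~\ref{symtheorepeated}, together with the fact that a map which is simultaneously linear and root-linear over a field with more than two elements must vanish (Lemma~\ref{linear+rootlinearLemma}, which you re-derive inline through the identity $G(\lambda^{2}M)-\lambda G(M)=(\lambda^{2}-\lambda)MZ$). The only cosmetic difference is in the backward direction of the first equivalence, where the paper first normalizes $y$ to $\begin{bmatrix}0\\1\end{bmatrix}$ via $P\Mats_2(\K)P^{T}=\Mats_2(\K)$ and then kills $L\mapsto LX$ as a linear-and-root-linear map, whereas you keep $y$ arbitrary and deduce $Z=0$ directly from the condition $MZ\in\K y$ for all symmetric $M$.
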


\begin{proof}
The second statement follows directly from Lemma \ref{linear+rootlinearLemma}
and Theorem \ref{symmetrictheo}.
As far as the first statement is concerned, it is obvious
that $G$ is non-zero if $G \modu y$ is non-zero, and that $G \modu y$ is root-linear if $G$ is root-linear.

Now, we can find a matrix $P \in \GL_2(\K)$ with second column $y$:
since $\Mats_2(\K)=P\Mats_2(\K)P^T$ and $P\begin{bmatrix}
0 \\
1
\end{bmatrix}=y$, we see that no generality is lost in assuming that $y=\begin{bmatrix}
0 \\
1
\end{bmatrix}$. In that situation, we know that there is a vector $X \in \K^2$ and a root-linear form $\alpha$ on $\K$ such that
$$G : M \mapsto MX+\begin{bmatrix}
\alpha(m_{1,1}) \\
\alpha(m_{2,2})
\end{bmatrix}.$$
Then, $G \modu y$ is represented in well-chosen bases by
$$\varphi : L=\begin{bmatrix}
\ell_1 & \ell_2
\end{bmatrix} \in \Mat_{1,2}(\K) \mapsto LX+\alpha(\ell_1).$$
Assume that $G \modu y$ is root-linear. Then, $L \in  \Mat_{1,2}(\K) \mapsto LX$
is zero because it is both linear and root-linear. Therefore $X=0$, whence $G$ is root-linear.
If in addition we assume that $G \modu y=0$, then we further obtain $\alpha=0$, and hence $G=0$.
This concludes the proof.
\end{proof}

\begin{lemma}\label{rank1matrixlemma}
Let $G : \Mats_2(\K) \rightarrow \K^2$ be a range-compatible root-linear map.
Assume that there is a rank $1$ matrix $S \in \Mats_2(\K)$ such that $G(tS)=0$ for all $t \in \K$.
Then, $G=0$.
\end{lemma}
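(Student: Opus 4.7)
The plan is to exploit the explicit description of root-linear range-compatible homomorphisms given by Lemma \ref{rootlinnonzerolemma}, which reduces $G$ to a form involving a single root-linear form $\alpha$ on $\K$, and then compute $G(tS)$ directly using the defining identity of root-linearity.

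First I would apply Lemma \ref{rootlinnonzerolemma} to obtain a root-linear form $\alpha$ on $\K$ such that
$$\forall M \in \Mats_2(\K), \quad G(M)=\begin{bmatrix} \alpha(m_{1,1}) \\ \alpha(m_{2,2}) \end{bmatrix}.$$
Next, I would use the fact that a rank-$1$ symmetric matrix over $\K$ must be of the form $S=\lambda\,u u^T$ for some $\lambda \in \K^*$ and some non-zero $u=\begin{bmatrix} x_1 & x_2 \end{bmatrix}^T \in \K^2$ (because the equality $uv^T=vu^T$ with $u \neq 0$ forces $v$ to be colinear to $u$). The diagonal entries of $tS$ are then $t\lambda x_1^2$ and $t\lambda x_2^2$.

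Using the root-linearity identity $\alpha(\mu^2 y)=\mu\,\alpha(y)$ with $\mu=x_i$ and $y=t\lambda$, I obtain
$$G(tS)=\begin{bmatrix} \alpha(t\lambda x_1^2) \\ \alpha(t\lambda x_2^2) \end{bmatrix}=\begin{bmatrix} x_1\,\alpha(t\lambda) \\ x_2\,\alpha(t\lambda) \end{bmatrix}=\alpha(t\lambda)\,u.$$
Since $u \neq 0$, the assumption $G(tS)=0$ for all $t \in \K$ forces $\alpha(t\lambda)=0$ for all $t \in \K$. As $\lambda \neq 0$, the map $t \mapsto t\lambda$ is a bijection of $\K$, so $\alpha$ vanishes identically. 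Hence $G=0$.

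No genuine obstacle arises here: the lemma is essentially a transparent computation once the normal form for $G$ and the decomposition $S=\lambda uu^T$ are in hand. The only point that requires minor care is handling the degenerate case where one of $x_1,x_2$ vanishes, but this does not affect the argument since the remaining coordinate of $u$ is non-zero and the same conclusion $\alpha(t\lambda)=0$ follows.
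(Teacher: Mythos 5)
Your proposal is correct and follows essentially the same route as the paper: both start by invoking the second statement of Lemma \ref{rootlinnonzerolemma} to put $G$ in the form $M \mapsto \begin{bmatrix} \alpha(m_{1,1}) & \alpha(m_{2,2})\end{bmatrix}^T$ and then evaluate at $tS$ to force $\alpha=0$. The only (harmless) difference is in the last step: you write $S=\lambda uu^T$ and use root-linearity to factor out the $x_i$, whereas the paper simply observes that a rank-$1$ symmetric $2\times 2$ matrix must have a non-zero diagonal entry, so that $\alpha(ts_{i,i})=0$ for all $t$ already gives $\alpha=0$.
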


\begin{proof}
Using the second statement of Lemma \ref{rootlinnonzerolemma}, we find a root-linear form $\alpha$ on $\K$ such that
$$\forall M \in \Mats_2(\K), \quad G(M)=\begin{bmatrix}
\alpha(m_{1,1}) \\
\alpha(m_{2,2})
\end{bmatrix}.$$
Thus, $\alpha(t s_{1,1})=\alpha(t s_{2,2})=0$ for all $t \in \K$. \\
As $S$ has rank $1$ and is symmetric, we have $s_{1,1} \neq 0$ or $s_{2,2} \neq 0$, whence $\alpha=0$.
It ensues that $G=0$.
\end{proof}

Now, we can come back to our situation.

\begin{claim}\label{claim4}
There is no vector $y \in V \setminus \{0\}$ for which $F \modu y$ is linear.
\end{claim}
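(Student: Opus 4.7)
The plan is to derive a contradiction by putting $\calS$ and $F$ in a very explicit normal form dictated by assumption (C) and the Type 2 structure of $\calS \modu y_0$. First I would pick a basis $\bfB$ of $U$ and a basis $\bfC = (y_1, y_2, y_0)$ of $V$ in which $\calS \modu y_0$ is represented by $\Mats_2(\K)$. Because $\dim \calS = 3 = \dim(\calS \modu y_0)$, the projection from $\calS$ onto $\calS \modu y_0$ is an isomorphism, so every element of $\calS$ has a unique representation
$$M(a, b, c) = \begin{bmatrix} a & b \\ b & c \\ \varphi(a,b,c) & \psi(a,b,c) \end{bmatrix}, \quad (a,b,c) \in \K^3,$$
for some linear forms $\varphi, \psi$ on $\K^3$. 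Assumption (C) together with Lemma \ref{rootlinnonzerolemma} then yields a non-zero root-linear form $\alpha$ on $\K$ and a group homomorphism $\gamma : \K^3 \rightarrow \K$ such that $F_{\bfB,\bfC}(M(a,b,c)) = \bigl[\alpha(a),\, \alpha(c),\, \gamma(a,b,c)\bigr]^T$.

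Next I would argue that any $y \in V \setminus \{0\}$ for which $F \modu y$ is linear must lie in $\K y_0$. Writing the coordinates of $y$ in $\bfC$ as $(\mu_1, \mu_2, \mu_3)$, linearity of $F \modu y$ amounts to $F(\lambda s) - \lambda F(s) \in \K y$ for every $\lambda \in \K$ and every $s \in \calS$, since $F$ is already additive. Computing the first two entries of $F(\lambda M(a,b,c)) - \lambda F(M(a,b,c))$ yields $\bigl(\alpha(\lambda a) - \lambda \alpha(a),\, \alpha(\lambda c) - \lambda \alpha(c)\bigr)$, and its proportionality with $(\mu_1, \mu_2)$ translates into
$$\mu_2\bigl(\alpha(\lambda a) - \lambda \alpha(a)\bigr) = \mu_1 \bigl(\alpha(\lambda c) - \lambda \alpha(c)\bigr) \qquad \text{for all } \lambda, a, c \in \K.$$
Specializing to $(a, c) = (1, 0)$ produces $\mu_2(\alpha(\lambda) - \lambda \alpha(1)) = 0$ for every $\lambda \in \K$. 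Since $\alpha$ is root-linear, non-zero, and $\# \K > 2$, Lemma \ref{linear+rootlinearLemma} forbids $\alpha$ from being linear, so some $\lambda$ makes $\alpha(\lambda) - \lambda \alpha(1)$ non-zero, forcing $\mu_2 = 0$. Taking $(a, c) = (0, 1)$ gives $\mu_1 = 0$ in exactly the same way, and hence $y \in \K y_0$.

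Finally, the equality $\K y = \K y_0$ gives $F \modu y = F \modu y_0$, which is then both linear (by our standing hypothesis on $y$) and root-linear and non-zero (by (C)); Lemma \ref{linear+rootlinearLemma} then forces $F \modu y_0 = 0$, contradicting (C). The main obstacle is not computational but structural: I must ensure that the bases can be chosen so that $y_0$ itself, and not merely some multiple, is the third vector of $\bfC$, and that the parameterization $M(a,b,c)$ actually covers $\calS$, both of which rest on the dimension identity $\dim \calS = \dim(\calS \modu y_0) = 3$. Once this setup is in place, the remainder reduces to the short calculation above, and the characteristic $2$ hypothesis enters only through the fact that a non-zero root-linear form cannot be linear.
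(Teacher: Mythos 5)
Your proof is correct, but it follows a different route from the paper's. The paper disposes of the claim in three lines: it embeds $y$ and $y_0$ into a $2$-dimensional subspace $P$ of $V$ and observes that $F \modu P$ is simultaneously linear (being a further projection of the linear $F \modu y$) and root-linear and non-zero (by the first statement of Lemma \ref{rootlinnonzerolemma} applied to $F \modu y_0$), contradicting Lemma \ref{linear+rootlinearLemma}. You instead invoke the second statement of Lemma \ref{rootlinnonzerolemma} to put $F$ in explicit coordinates and show by direct computation, specializing to $(a,c)=(1,0)$ and $(0,1)$, that the hypothetical $y$ must satisfy $\mu_1=\mu_2=0$, i.e.\ $y \in \K y_0$; the contradiction you then reach is exactly the degenerate case of the paper's argument, with $F \modu y_0$ itself being both linear and root-linear non-zero. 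Both proofs ultimately rest on the same incompatibility from Lemma \ref{linear+rootlinearLemma}; the paper's quotient-by-a-plane device handles the case $y \notin \K y_0$ without coordinates, whereas your computation does the same work explicitly (and as a by-product pins down where a linear projection could conceivably live). Your setup concerns --- choosing $\bfC=(y_1,y_2,y_0)$ and using $\dim\calS=\dim(\calS\modu y_0)=3$ to parameterize $\calS$ by $\Mats_2(\K)$ --- are legitimate and are correctly resolved exactly as you describe.
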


\begin{proof}
Assume that such a vector exists. Then, we can embed $y_0$ and $y$ into a $2$-dimensional subspace $P$ of $V$.
Our assumption yields that $F \modu P$ is linear. On the other hand, as $F \modu y_0$
is root-linear, non-zero, and $\calS \modu y_0$ has Type 2, we deduce from Lemma \ref{rootlinnonzerolemma} that
$F \modu P$ is root-linear and non-zero. This contradicts Lemma \ref{linear+rootlinearLemma}.
\end{proof}

\begin{claim}\label{claim5}
Let $y \in V \setminus \{0\}$ be an $\calS$-adapted vector of Type 2 for $F$.
Then, $F \modu y$ is root-linear and non-zero.
\end{claim}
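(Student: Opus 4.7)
The plan is to deduce the root-linearity and non-vanishing of $F \modu y$ from that of $F \modu y_0$ (given by assumption (C)) by routing through a common further projection. If $\K y = \K y_0$, then $F \modu y = F \modu y_0$ up to the canonical identification of the quotients, and the claim is immediate from (C). So assume $y \notin \K y_0$, so that $W := \K y_0 + \K y$ is a two-dimensional subspace of the three-dimensional space $V$.

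The first step is to show that $F \modu W$ is root-linear and non-zero. Since $y_0$ has Type 2 for $F$, one can pick bases in which $\calS \modu y_0$ is represented by $\Mats_2(\K)$ and $V/\K y_0$ by $\K^2$; by (C), $F \modu y_0$ is then represented by a root-linear non-zero range-compatible homomorphism $G : \Mats_2(\K) \rightarrow \K^2$. The non-zero vector $\pi_{y_0}(y) \in V/\K y_0$ corresponds to a non-zero vector of $\K^2$, and Lemma \ref{rootlinnonzerolemma} (``only if'' direction) then yields that $G \modu (\pi_{y_0}(y))$ is root-linear and non-zero. Unravelling the identifications and using the functoriality of the projection construction, this map is exactly $F \modu W = (F \modu y_0) \modu \pi_{y_0}(y)$, which is therefore a root-linear non-zero range-compatible homomorphism.

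The second step is to apply Lemma \ref{rootlinnonzerolemma} in the reverse direction. Since $y$ has Type 2 for $F$, again $\calS \modu y$ is represented by $\Mats_2(\K)$ and $V/\K y$ by $\K^2$, so $F \modu y$ corresponds to a range-compatible homomorphism $G' : \Mats_2(\K) \rightarrow \K^2$. The vector $\pi_y(y_0) \in V/\K y$ is non-zero (since $y_0 \notin \K y$) and the further projection $(F \modu y) \modu \pi_y(y_0)$ coincides with $F \modu W$, which has just been shown to be root-linear and non-zero. Applying Lemma \ref{rootlinnonzerolemma} (``if'' direction) to $G'$ finishes the proof.

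I do not expect any serious obstacle: the only thing to be careful about is the consistency of the two factorizations $(F \modu y_0) \modu \pi_{y_0}(y) = F \modu W = (F \modu y) \modu \pi_y(y_0)$, but this is an immediate consequence of the defining universal property of $F \modu V_0$ in the Projection Lemma, together with the natural isomorphism between iterated quotients.
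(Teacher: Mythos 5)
Your proposal is correct and follows essentially the same route as the paper: embed $y$ and $y_0$ into the $2$-dimensional subspace $P=\K y_0+\K y$, apply Lemma \ref{rootlinnonzerolemma} once to deduce that $F \modu P$ is root-linear and non-zero, and then apply it in the reverse direction to conclude for $F \modu y$. The extra care you take with the colinear case and with the identification of iterated quotients is harmless but not needed beyond what the Projection Lemma already provides.
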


\begin{proof}
Again, we embed $y$ and $y_0$ into a $2$-dimensional subspace $P$ of $V$.
Applying Lemma \ref{rootlinnonzerolemma}, we successively find that $F \modu P$ is root-linear and non-zero,
and then that $F \modu y$ is root-linear and non-zero.
\end{proof}

\begin{claim}
$F$ is root-linear.
\end{claim}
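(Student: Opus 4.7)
The plan is a direct coordinate computation that pins down $F$ to a manifestly root-linear form. Choose bases $\bfB$ of $U$ and $\bfC$ of $V$ with $y_0$ the third vector of $\bfC$ and $\calS \modu y_0 = \Mats_2(\K)$; this is possible since $\calS \modu y_0$ has Type $2$. Then $\calS$ is represented by the matrices
\[\begin{bmatrix} a & b \\ b & c \\ \ell_1(S) & \ell_2(S)\end{bmatrix}, \quad S=\begin{pmatrix}a & b \\ b & c\end{pmatrix} \in \Mats_2(\K),\]
for a linear map $\ell=(\ell_1,\ell_2) : \Mats_2(\K) \to \K^{1\times 2}$. Assumption (C) combined with Lemma \ref{rootlinnonzerolemma} applied to $F \modu y_0$ forces the first two coordinates of $F$ to be $(\alpha(a),\alpha(c))$ for a nonzero root-linear form $\alpha$ on $\K$, so that $F(s)=(\alpha(a),\alpha(c),f(S))^\top$ with $f: \Mats_2(\K) \to \K$ a group homomorphism. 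Decompose $f(S)=g_1(a)+g_2(b)+g_3(c)$ with $g_i$ endomorphisms of $(\K,+)$, and $\ell_i(S)=\lambda_i a+\mu_i b+\nu_i c$.

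The second step is to exploit range-compatibility on matrices $S$ with $\det S \neq 0$: dotting $F(s)$ against the cross-product vector $(b\ell_2+c\ell_1,\, a\ell_2+b\ell_1,\, ac+b^2)$ of the two columns of $s$ (which in characteristic $2$ spans $\ker s^\top$ for $\rk s=2$) reduces $F(s)\in\im s$ to the single scalar identity
\[\bigl(b\ell_2+c\ell_1\bigr)\alpha(a)+\bigl(a\ell_2+b\ell_1\bigr)\alpha(c)+\bigl(ac+b^2\bigr) f(S)=0\]
holding for every $(a,b,c)\in\K^3$.

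The third step is to extract the coefficients by specialization. Setting $c=0$ and then $a=0$ yields $g_2=0$; setting $c=0$ and $b=1$ yields $g_1(a)=\mu_2\alpha(a)+\lambda_2\,a\alpha(a)$, but since $g_1$ is additive while $a\mapsto a\alpha(a)$ is not (additivity would force $\alpha$ linear, hence zero by Lemma \ref{linear+rootlinearLemma} since $\#\K>2$, contradicting $\alpha\neq 0$), we obtain $\lambda_2=0$ and $g_1=\mu_2\alpha$. The symmetric specialization $a=0$, $b=1$ yields $\nu_1=0$ and $g_3=\mu_1\alpha$. Plugging all these back into the general identity and separating the contributions in $\alpha(a)$ and $\alpha(c)$ via the further specialization $b=0$ produces the final relations $\lambda_1=\mu_2$ and $\nu_2=\mu_1$.

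These constraints give $f(S)=\mu_2\alpha(a)+\mu_1\alpha(c)$ and therefore
\[F(s)=\bigl(\alpha(a),\,\alpha(c),\,\mu_2\alpha(a)+\mu_1\alpha(c)\bigr)^\top,\]
which is manifestly root-linear since $\alpha$ is. The main obstacle is the bookkeeping in the third step: each scalar constraint is pinned down by the same trick, namely that an additive map of the form $c_1\alpha(\cdot)+c_2(\cdot)\alpha(\cdot)$ forces $c_2=0$ via the failure of $a\mapsto a\alpha(a)$ to be additive, which in turn is an instance of Lemma \ref{linear+rootlinearLemma} and relies crucially on $\#\K>2$.
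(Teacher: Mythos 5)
Your proof is correct, but it takes a genuinely different route from the paper's. The paper argues indirectly: it first shows that there must exist a second $\calS$-adapted vector $y$ of Type 2 for $F$ with $y \notin \K y_0$ (otherwise, a covering argument valid because $\# \K \geq 4$, combined with Lemma \ref{3planeslemma}, would force $F$ to be local), and then concludes by the intersection argument of Lemma \ref{intersectionlinearlemma} transposed to root-linearity: $F \modu y$ and $F \modu y_0$ are both root-linear by Claim \ref{claim5}, and $\K y \cap \K y_0 = \{0\}$. You instead work in coordinates adapted to $y_0$ alone and extract everything from a single determinant identity; the key enabling facts are that $s \mapsto \pi \circ s$ is bijective (both $\calS$ and $\calS \modu y_0$ have dimension $3$), so the third row is a linear function of $S$, and that $a \mapsto a\,\alpha(a)$ cannot be additive unless its coefficient vanishes, which is your (correct) stand-in for Lemma \ref{linear+rootlinearLemma}. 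Two remarks. First, the determinant identity actually holds for \emph{all} $(a,b,c)$, not only when $\det S \neq 0$: if the two columns of the matrix are dependent, the cross-product vector is zero and the identity is trivial, while if they are independent it follows from range-compatibility; in any case every specialization you use has $\det S \neq 0$, so nothing is lost. Second, your computation proves strictly more than the stated claim: the relations $\lambda_2=\nu_1=0$, $\lambda_1=\mu_2$, $\nu_2=\mu_1$ say that the third row of every matrix of $\calS$ equals $\mu_2$ times the first plus $\mu_1$ times the second, so $\calS$ itself has Type 2 and $F$ has exactly the expected form. Your argument therefore short-circuits the remainder of the section (the case discussion on condition (F)), whereas the paper's softer argument only yields root-linearity at this stage and defers the structural conclusion.
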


\begin{proof}
Assume that every $\calS$-adapted vector $y$ of Type 2 for $F$
is colinear to $y_0$. Then, by Claim \ref{claim4}, every $\calS$-adapted vector $y$ that is not colinear to $y_0$
must have Type 1 for $F$.
As $\# \K \geq 4$, Lemma 2.5 of \cite{dSPfeweigenvalues} yields that, for any triple of $2$-dimensional subspaces
$P_0$, $P_1$ and $P_2$ of $V$, there is a vector of $V$ outside of
$P_0 \cup P_1 \cup P_2 \cup \K y_0$.
Then, we can repeat the strategy of the proof of Claim \ref{adaptedtype2or3claim} by taking $P_0$ as a $2$-dimensional subspace containing all the non-$\calS$-adapted vectors: then, we find that the conditions of Lemma \ref{3planeslemma} are fulfilled, and hence $F$ is linear, contradicting our assumptions.

Thus, we find an $\calS$-adapted vector $y$ of Type 2 for $F$
such that $y \not\in \K y_0$. By Claim \ref{claim5}, both maps $F \modu y$ and $F \modu y_0$ are root-linear,
and we conclude that $F$ is root-linear by using the same line of reasoning as in the proof of Lemma
\ref{intersectionlinearlemma}.
\end{proof}

Now, we need to split the discussion into two cases, whether the following condition holds or not.

\begin{itemize}
\item[(F)]
Every $\calS$-adapted vector $y$ has Type 2 for $F$.
\end{itemize}

\noindent \textbf{Case 1. Condition (F) fails.} \\
Let us choose an $\calS$-adapted vector $y$ with Type 1 for $F$.
This yields a non-zero vector $x \in U$ such that $P:=\calS x$ has dimension $2$
and $F \modu P$ is linear. As $F$ is root-linear, we deduce that $F \modu P=0$.

If $y_0 \in P$, then $P/\K y_0$ is a $1$-dimensional subspace of $V/\K y_0$,
whence Lemma \ref{rootlinnonzerolemma} yields that $F \modu P=(F \modu y_0) \modu (P/\K y_0)$ is non-linear.

Therefore, $y_0 \not\in P$. Then, we extend $x$ into a basis $\bfB=(x,x')$ of $U$,
and we see that we can choose a basis $(y_1,y_2)$ of $P$ such that
$\calS \modu y_0$ is represented by $\Mats_2(\K)$ in the bases $(x,x')$ and $(\overline{y_1},\overline{y_2})$
(using the identity $P\Mats_2(\K)P^T=\Mats_2(\K)$ for all $P \in \GL_2(\K)$).
Then, as $\calS x=P$ and $F \modu y_0$ is root-linear, we deduce that there are scalars $\lambda,\mu,\nu$ such that
$\calS$ is represented in the bases $\bfB$ and $\bfC=(y_1,y_2,y_0)$ by the matrix space
$$\Biggl\{\begin{bmatrix}
a & b \\
b & c \\
0 & \lambda a +\mu b+\nu c
\end{bmatrix} \mid (a,b,c)\in \K^3\Biggr\}$$
and there is a non-zero root-linear form $\alpha$ on $\K$ such that
$$F_{\bfB,\bfC}: \begin{bmatrix}
a & b \\
b & c \\
0 & \lambda a +\mu b+\nu c
\end{bmatrix} \longmapsto \begin{bmatrix}
\alpha(a) \\
\alpha(c) \\
0
\end{bmatrix}.$$
Fix $a \in \K$ such that $\alpha(a) \neq 0$.
Thus, for all $(b,c)\in \K^2$, we find
$$0=\begin{vmatrix}
a & b & \alpha(a) \\
b & c & \alpha(c) \\
0 & \lambda a +\mu b+\nu c & 0
\end{vmatrix}=(\lambda a +\mu b+\nu c)\bigl(\alpha(a) b-\alpha(c) a\bigr).$$
Fixing $c \in \K$, we see that the polynomial function $b \mapsto \alpha(a) b-\alpha(c) a$
is non-zero of degree $1$, and $b \mapsto \lambda a +\mu b+\nu c$ has degree at most $1$.
It follows that $b \mapsto \lambda a +\mu b+\nu c$ vanishes everywhere on $\K$, whence $\mu=0$ and
$\lambda a+\nu c=0$. Varying $c$ yields $\nu=0$. As $a \neq 0$, one concludes that $\lambda=0$.
Thus, $\calS$ has Type 2, which concludes the proof in that case
(this is actually a contradiction with assumption (F), but never mind).

\noindent \textbf{Case 2. Condition (F) holds} \\
We start with a simple result.

\begin{claim}\label{claim13}
There is a rank $2$ operator $s_0$ in $\calS$ such that $t s_0 \in \Ker F$ for all $t \in \K$.
\end{claim}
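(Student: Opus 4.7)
The plan is to exhibit $s_0$ explicitly by lifting a carefully chosen rank-$2$ symmetric matrix from $\calS \modu y_0$ back to $\calS$, where the off-diagonal structure forces $F$ to vanish on the whole line $\K s_0$.

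First, I would set up coordinates using the Type~$2$ hypothesis on $y_0$ and condition~(C). Because $y_0$ has Type~$2$ for $F$, one can choose a basis $\bfB=(x_1,x_2)$ of $U$ and a basis $(\overline{y_1},\overline{y_2})$ of $V/\K y_0$, which we extend to a basis $\bfC=(y_1,y_2,y_0)$ of $V$, in which $\calS \modu y_0$ is represented by the full space $\Mats_2(\K)$; then, since $F\modu y_0$ is root-linear (by (C)), Lemma~\ref{rootlinnonzerolemma} ensures that, in these bases, $F\modu y_0$ is represented by $M\mapsto\bigl(\alpha(m_{1,1}),\alpha(m_{2,2})\bigr)^T$ for some root-linear form $\alpha$ on $\K$. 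Since $\dim\calS = 3 = \dim\Mats_2(\K) = \dim(\calS\modu y_0)$, the projection $\calS\to\calS\modu y_0$ is a linear isomorphism, so I can define $s_0 \in \calS$ as the unique lift of $\sigma:=\begin{bmatrix}0&1\\1&0\end{bmatrix}$. In the bases $\bfB,\bfC$, the operator $s_0$ is represented by a matrix of the form $\begin{bmatrix}0&1\\1&0\\\mu_1&\mu_2\end{bmatrix}$ for some $\mu_1,\mu_2\in\K$.

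Next, I would verify the two geometric properties that make $s_0$ work: (i) $s_0$ has rank~$2$, which is immediate since the top two rows of its matrix already span $\K^2$; and (ii) $\K y_0\cap\im s_0=\{0\}$, which follows at once because the determinant of the augmented $3\times 3$ matrix $[s_0\mid y_0]$ expanded along the third column equals $\det\sigma=1\neq 0$ in characteristic~$2$.

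Finally, I would combine these ingredients. For any $t\in\K$, the operator $ts_0$ projects modulo $y_0$ to $t\sigma$, a symmetric matrix with zero diagonal; hence $(F\modu y_0)(t\sigma)=(\alpha(0),\alpha(0))^T=0$, which means $F(ts_0)\in\K y_0$. By range-compatibility, $F(ts_0)\in\im(ts_0)\subseteq\im s_0$, and property~(ii) then forces $F(ts_0)=0$. Thus $ts_0\in\Ker F$ for every $t\in\K$, proving the claim. There is no serious obstacle here: the whole point is to recognize that the antidiagonal lift $\sigma$ simultaneously has rank~$2$ and kills the diagonal-extracting form $F\modu y_0$, and that its image avoids $y_0$.
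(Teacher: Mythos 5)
Your proposal is correct and follows essentially the same route as the paper: you lift the antidiagonal matrix $\begin{bmatrix}0&1\\1&0\end{bmatrix}$ from $\calS\modu y_0\cong\Mats_2(\K)$ to get $s_0$, observe that the root-linear form $\alpha$ kills the zero diagonal of $t\sigma$ so that $F(ts_0)\in\K y_0$, and then use the nonvanishing of the $3\times 3$ determinant (i.e.\ $y_0\notin\im s_0$) to force $F(ts_0)=0$. The paper packages the last step as the single determinant identity $-t^2h(t)=0$ rather than as the transversality statement $\K y_0\cap\im s_0=\{0\}$, but these are the same computation.
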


\begin{proof}
As $\calS \modu y$ has Type 2 for some $y \in V \setminus \{0\}$, we see that no generality is lost in assuming that
$\calS$ is a linear subspace of $\Mat_{3,2}(\K)$ and
there is a root-linear form $\alpha$ such that every matrix of $\calS$ has the form $M=\begin{bmatrix}
a & b \\
b & c \\
? & ?
\end{bmatrix}$ and, for any such $M$, that $F(M)=\begin{bmatrix}
\alpha(a)\\
\alpha(c) \\
?
\end{bmatrix}$. Then, we can choose a matrix $M_0 \in \calS$ of the form
$M_0=\begin{bmatrix}
0 & 1 \\
1 & 0 \\
? & ?
\end{bmatrix}$, so that $M_0$ has rank $2$. \\
Fix $t \in \K \setminus \{0\}$; then, $F(tM_0)=\begin{bmatrix}
0 \\
0 \\
h(t)
\end{bmatrix}$ for some $h(t) \in \K$, and one finds $h(t)=0$ by writing that
$\begin{vmatrix}
0 & t & 0 \\
t & 0 & 0 \\
? & ? & h(t)
\end{vmatrix}=0$. Thus, $F(tM_0)=0$ for all $t \in \K$ (the case $t=0$ being trivial).
\end{proof}

From there, we choose a non-zero operator $s_0$ given by Claim \ref{claim13}. Set $P:=\im s_0$.
Assume that some vector $y \in P \setminus \{0\}$ is $\calS$-adapted.
Then, the composite of $s_0$ with the canonical projection onto $V /\K y$ has rank $1$.
As $\calS \modu y$ has Type 2, Lemma \ref{rank1matrixlemma} yields a contradiction because we know that $F \modu y$
is root-linear and non-zero.
Therefore, no vector $y \in P \setminus \{0\}$ is $\calS$-adapted.
In particular, $y_0 \not\in P$, whence, as in Case 1, we can reduce the situation to the matrix case when
$P=\K^2 \times \{0\}$, $y_0=\begin{bmatrix}
0 \\
0 \\
1
\end{bmatrix}$, and every matrix of $\calS$ has the form
$$\begin{bmatrix}
N \\
[?]_{1 \times 2}
\end{bmatrix} \quad \text{for some $N \in \Mats_2(\K)$.}$$
As no non-zero vector of $P$ is $\calS$-adapted, we deduce that for every non-zero vector $Y \in \K^2$,
there is a non-zero matrix $N$ of $\Mats_2(\K)$ with range $\K Y$ and such that $\calS$ contains
$\begin{bmatrix}
N \\
[0]_{1 \times 2}
\end{bmatrix}$. Taking successively $Y=\begin{bmatrix}
1 \\
0
\end{bmatrix}$, $Y=\begin{bmatrix}
0 \\
1
\end{bmatrix}$ and $Y=\begin{bmatrix}
1 \\
1
\end{bmatrix}$, and considering all the linear combinations of the obtained matrices,
one concludes that $\calS$ contains every matrix of the form
$\begin{bmatrix}
N \\
[0]_{1 \times 2}
\end{bmatrix}$ with $N \in \Mats_2(\K)$. As $\dim \calS=3$, one concludes that $\calS$ has Type 2.

Thus, we have completed the case $p=2$, which was the only remaining one.
This completes our inductive proof of statement (b) from Theorem \ref{classtheo2}.

\section{Application to range preservers}

\subsection{Range-restricting homomorphisms}\label{rangerestrictingsection}

In this section, we apply Theorems \ref{maintheolin} and \ref{classtheo2}
to study range-restricting homomorphisms between operator spaces.

Let us start with a few definitions:

\begin{Def}
Let $\calS$ be a linear subspace of $\calL(U,V)$, and $U'$ and $V'$ be vector spaces.
A map $F : \calS \rightarrow \calL(U',V)$ is called \textbf{range-restricting}
when $\im F(s) \subset \im s$ for all $s \in \calS$. It is called a \textbf{range preserver}
when $\im F(s)=\im s$ for all $s \in \calS$.
Note that a range-preserving homomorphism must be injective.

A map $F : \calS \rightarrow \calL(U,V')$ is called \textbf{kernel-extending}
when $\Ker s \subset \Ker F(s)$ for all $s \in \calS$. It is called a \textbf{kernel preserver}
when $\Ker s=\Ker F(s)$ for all $s \in \calS$.

We adopt similar definitions for maps between matrix spaces.
\end{Def}

As with range-compatible maps, there are obvious examples of range-restricting linear maps:
those of the form $s \mapsto s \circ u$ where $u : U' \rightarrow U$ is linear.

We shall focus on range-restricting and range-preserving maps, as the kernel-extending and kernel-preserving
ones can be readily deduced from the former by a duality argument that is more easily expressed in terms of matrices:
given a linear subspace $\calS$ of $\Mat_{n,p}(\K)$,
a map $F : \calS \rightarrow \Mat_{m,p}(\K)$ is
kernel-extending (respectively, kernel-preserving) if and only if $F^T : M \in \calS^T \rightarrow F(M^T)^T \in \Mat_{p,m}(\K)$ is range-restricting (respectively, range-preserving). Moreover, $F$ is a group homomorphism
(respectively, a linear map) if and only if $F^T$ is a group homomorphism (respectively, a linear map).

So far, the above types of preservers have not been given much attention because, in the case of full spaces
of linear maps, the linear range preservers and kernel preservers are easy to determine.
Moreover, linear range preservers are also linear rank preservers and the latter are known for a wide variety
of \emph{special} matrix spaces (e.g. full rectangular matrix spaces, spaces of symmetric matrices, of
upper-triangular matrices and so on). However, if we consider linear subspaces of matrices with small codimension, the question becomes more relevant because very little is known on preservers problem at this level of generality
(this is a fresh new research topic, with \cite{dSPlargelinpres} as the first general study of that type of problem).

Range-restricting homomorphisms are related to range-compatible homomorphisms by \emph{localizing} at the vectors of $U'$: if we have a range-restricting homomorphism $F : \calS \rightarrow \calL(U',V)$,
then, for all $x \in U'$, the localized homomorphism $F_x : s \mapsto F(s)[x]$
is range-compatible. Conversely, setting a basis $(e_i)_{i \in I}$ of $U'$ and given, for each $i \in I$,
a range-compatible homomorphism $F^{(i)}$ on $\calS$, we can glue the $F^{(i)}$'s together so as to recover
a range-restricting homomorphism $F : \calS \rightarrow \calL(U',V)$ such that $F^{(i)}=F_{e_i}$ for all $i \in I$:
more explicitly, for each $s \in \calS$, one defines $F(s)$ as the sole linear map on $U'$ which
assigns $F^{(i)}(s)$ to $e_i$ for all $i \in I$, so that $\im F(s)=\Vect\{F^{(i)}(s)\mid i \in I\} \subset \im s$.
Moreover, if each $F_{e_i}$ is the evaluation at some $x_i \in U$,
then we see that $F : s \mapsto s \circ u$, where $u : U' \rightarrow U$ is the linear operator that
assigns $x_i$ to $e_i$ for all $i \in I$.

Thus, if every range-compatible linear map on $\calS$
is local, then every range-restricting linear map from $\calS$ is the right-composition by some linear map.
The same holds if we substitute the ``linearity" assumption for the ``group homomorphism" assumption.

Thus, Theorems \ref{maintheolin} and \ref{classtheo2} yield the following result, expressed in terms of matrix spaces:

\begin{theo}\label{rangerestricterlinear}
Let $\calS$ be a linear subspace of $\Mat_{n,p}(\K)$ with
$\codim \calS \leq d_n(\K)$. Let $q \geq 1$ be an integer.
Then, the linear range-restricting maps from $\calS$ to $\Mat_{n,q}(\K)$ are the maps of the form:
$$M \longmapsto MP, \quad \text{with $P \in \Mat_{p,q}(\K)$.}$$
If, in addition, $\calS$ has none of Types 1 to 3,
then the above maps are also the range-restricting homomorphisms from $\calS$ to $\Mat_{n,q}(\K)$.
\end{theo}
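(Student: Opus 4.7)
The plan is to reduce the theorem to the classification of range-compatible maps (Theorems \ref{maintheolin} and \ref{classtheo2}) by decomposing a range-restricting map columnwise, exactly as suggested in the paragraph preceding the statement.

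Let $F : \calS \to \Mat_{n,q}(\K)$ be range-restricting, and denote by $(e_1,\dots,e_q)$ the standard basis of $\K^q$. For each $j \in \lcro 1,q\rcro$, I introduce the componentwise map
$$F_j : M \in \calS \longmapsto F(M)\,e_j \in \K^n.$$
The inclusion $\im F(M) \subset \im M$ gives $F_j(M) \in \im M$, so $F_j$ is range-compatible on $\calS$. Moreover $F_j$ is linear whenever $F$ is linear, and is a group homomorphism whenever $F$ is a group homomorphism. Once each $F_j$ is shown to be local, say $F_j(M)=MX_j$ for some $X_j \in \K^p$, the matrix $P:=\begin{bmatrix} X_1 & \cdots & X_q\end{bmatrix}\in\Mat_{p,q}(\K)$ satisfies $F(M)=MP$ for all $M \in \calS$ by comparing columns.

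For the first assertion, apply Theorem \ref{maintheolin} (with the edge case $p=1$ handled by Corollary \ref{dimU=1}) to each linear range-compatible map $F_j$ to conclude its locality, and assemble $P$ as above. For the second assertion, assume $F$ is merely a group homomorphism and $\calS$ has none of Types 1 to 3. If $\#\K > 2$, then Theorem \ref{classtheo2}(b) forces each $F_j$ to be local and the assembly argument concludes. If $\#\K=2$, then $\K$ is a prime field, hence every group homomorphism between $\K$-vector spaces is automatically linear; consequently $F$ is linear and the first assertion applies.

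There is essentially no obstacle beyond the two classification theorems themselves: the reduction to coordinatewise range-compatibility is immediate from the definitions, and reassembling the individual locality witnesses $X_j$ into a single right-multiplier $P$ is a formal matter. The only mild care concerns edge cases ($p=1$, handled by Corollary \ref{dimU=1} rather than Theorem \ref{maintheolin}; and $\K=\F_2$ in the homomorphism part, collapsed to the linear case by the primality remark). The substantive content of the theorem lies entirely upstream, in Theorems \ref{maintheolin} and \ref{classtheo2}.
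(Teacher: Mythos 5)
Your proof is correct and is essentially the paper's own argument: the paper establishes the theorem by exactly this columnwise localization (each $F_j$ is range-compatible, hence local by Theorems \ref{maintheolin} and \ref{classtheo2}) followed by reassembling the witnesses into a single matrix $P$. Your explicit treatment of the edge cases ($p=1$ via Corollary \ref{dimU=1}, and $\K=\F_2$ in the homomorphism part via the primality of $\F_2$) is slightly more careful than the paper's, which leaves these points implicit.
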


With the same method, the description of all range-restricting homomorphisms from a space of Type 1, 2 or $3$
ensues from Proposition \ref{vecteurdim1} and Corollary \ref{symmetriccor}:

\begin{prop}\label{rangerestrictertype1}
Let $\calS$ be a linear subspace of $\K \vee \Mat_{n-1,p-1}(\K)$
that is not included in $\{0\} \coprod \Mat_{n,p-1}(\K)$ and such that $\codim \calS \leq 2n-3$.
Then, the range-restricting homomorphisms from $\calS$ to $\Mat_{n,q}(\K)$ are the maps of the form
$$M \mapsto MP+\begin{bmatrix}
R(m_{1,1}) \\
[0]_{(n-1) \times q}
\end{bmatrix}$$
for some $P \in \Mat_{p,q}(\K)$ and some homomorphism $R : \K \rightarrow \Mat_{1,q}(\K)$.
\end{prop}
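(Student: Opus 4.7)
The plan is to reduce the problem to Proposition \ref{vecteurdim1} via the \emph{localization} procedure sketched just before Theorem \ref{rangerestricterlinear}. Given any range-restricting homomorphism $F : \calS \to \Mat_{n,q}(\K)$, write the columns of $F(M)$ with respect to the standard basis $(e_1,\dots,e_q)$ of $\K^q$: each localized map
$$F_j : M \in \calS \longmapsto F(M)\,e_j \in \K^n \qquad (j \in \lcro 1,q\rcro)$$
is a range-compatible homomorphism on $\calS$. The idea is to classify each $F_j$ separately and then reassemble.

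The key observation is that $\calS$ fits exactly the hypotheses of Proposition \ref{vecteurdim1}(b) with the first standard basis vector $x_0 := e_1 \in \K^p$: the inclusion $\calS \subset \K \vee \Mat_{n-1,p-1}(\K)$ gives $\calS x_0 \subset \K e_1^{(n)}$ (where $e_1^{(n)}$ denotes the first vector of the standard basis of $\K^n$), so $\dim \calS x_0 \leq 1$; the assumption $\calS \not\subset \{0\} \coprod \Mat_{n,p-1}(\K)$ forces $\calS x_0 \neq \{0\}$, so in fact $\dim \calS x_0 = 1$. Combined with $\codim \calS \leq 2n-3$, Proposition \ref{vecteurdim1}(b) applies to each $F_j$ and yields, after identifying $\calS x_0 = \K e_1^{(n)}$ with $\K$, a vector $v_j \in \K^p$ and a group endomorphism $\alpha_j$ of $(\K,+)$ such that
$$\forall M \in \calS, \quad F_j(M) = M v_j + \alpha_j(m_{1,1})\,e_1^{(n)}.$$

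To conclude, I assemble the columns: let $P \in \Mat_{p,q}(\K)$ be the matrix whose $j$-th column is $v_j$, and define the group homomorphism $R : \K \to \Mat_{1,q}(\K)$ by $R(a) := \begin{bmatrix} \alpha_1(a) & \cdots & \alpha_q(a) \end{bmatrix}$. Putting the columns back together gives
$$F(M) = MP + \begin{bmatrix} R(m_{1,1}) \\ [0]_{(n-1) \times q} \end{bmatrix} \quad \text{for every } M \in \calS,$$
which is the announced form. The converse inclusion (that any such map is a range-restricting homomorphism) is a direct check: $MP$ has range in $\im M$ for any $P$; as for the correction term, when $m_{1,1}=0$ it vanishes, and when $m_{1,1}\neq 0$ the equality $Me_1 = m_{1,1}\,e_1^{(n)}$ gives $e_1^{(n)} \in \im M$. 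There is no real obstacle here: the proof is essentially mechanical assembly, with all the analytic content already packed into Proposition \ref{vecteurdim1}(b). The only point requiring care is to notice that the Type~1 witness $x_0 = e_1$ depends only on $\calS$ (not on $j$), so that the $q$ applications of Proposition \ref{vecteurdim1}(b) produce coherent data that can be gathered into a single pair $(P,R)$.
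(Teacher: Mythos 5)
Your proof is correct and follows exactly the route the paper intends: the paper derives Proposition \ref{rangerestrictertype1} by the localization procedure described before Theorem \ref{rangerestricterlinear} combined with Proposition \ref{vecteurdim1}(b), which is precisely your column-by-column argument. The details you supply (the Type~1 witness $x_0=e_1$ being independent of the column index, and the verification that the assembled map is range-restricting) are the right ones.
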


\begin{prop}\label{rangerestrictersym}
Assume that $\K$ has characteristic $2$.
Let $r \in \lcro 2,p\rcro$ and $q \geq 1$, and set $\calS:=\Mats_r(\K) \vee \Mat_{n-r,p-r}(\K)$.
Then, the range-restricting homomorphisms from $\calS$ to $\Mat_{n,q}(\K)$ are the maps of the form
$$M \mapsto MP+\begin{bmatrix}
R(m_{1,1}) \\
\vdots \\
R(m_{r,r}) \\
[0]_{(n-r) \times q}
\end{bmatrix}$$
for some $P \in \Mat_{p,q}(\K)$ and some root-linear map $R : \K \rightarrow \Mat_{1,q}(\K)$.
\end{prop}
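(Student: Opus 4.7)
My plan is to reduce the proposition to Corollary \ref{symmetriccor} by column-localization, in exact parallel with the argument sketched just before Theorem \ref{rangerestricterlinear}. Specifically, letting $(e_1,\dots,e_q)$ denote the standard basis of $\K^q$, I would first, for each $j \in \lcro 1,q\rcro$, introduce
$$F_j : M \in \calS \longmapsto F(M)\,e_j \in \K^n.$$
Each $F_j$ is a group homomorphism (inherited from $F$), and the range-restricting assumption $\im F(M) \subset \im M$ immediately implies $F_j(M) \in \im M$, so $F_j$ is range-compatible on $\calS$.

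Next, I would invoke Corollary \ref{symmetriccor}~(c). For this to yield a clean sum-decomposition, I would first observe that both generating families in the corollary are closed under addition and negation: the local maps $M \mapsto MX$ trivially (since $MX_1 + MX_2 = M(X_1+X_2)$), and the ``root-linear diagonal'' maps because sums and opposites of root-linear forms on $\K$ are again root-linear. Consequently, for each $j$, there exist a column $X_j \in \K^p$ and a root-linear form $\alpha_j$ on $\K$ such that
$$F_j(M) = M X_j + \begin{bmatrix} \alpha_j(m_{1,1}) \\ \vdots \\ \alpha_j(m_{r,r}) \\ [0]_{(n-r) \times 1} \end{bmatrix} \quad \text{for all } M \in \calS.$$

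Reassembling, I would then set $P := \begin{bmatrix} X_1 & \cdots & X_q \end{bmatrix} \in \Mat_{p,q}(\K)$ and define $R : \K \to \Mat_{1,q}(\K)$ by $R(t) := \begin{bmatrix} \alpha_1(t) & \cdots & \alpha_q(t) \end{bmatrix}$. The map $R$ inherits root-linearity entry-wise from the $\alpha_j$'s, and concatenating the $F_j$'s column by column produces exactly the announced formula for $F$. The converse direction is immediate: reading any map of the claimed form column by column, each column is the sum of a local map and a root-linear diagonal map, hence range-compatible by Corollary \ref{symmetriccor}, so its image lies in $\im M$ and therefore $\im F(M) \subset \im M$.

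I do not expect any significant obstacle: all the analytic content has been absorbed by Theorem \ref{symmetrictheo} and its corollary, and the only delicate point is the subgroup-closure remark that upgrades the ``generated by'' language of Corollary \ref{symmetriccor} into the required decomposition ``local $+$ root-linear diagonal''.
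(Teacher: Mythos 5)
Your proposal is correct and is essentially the proof the paper intends: the paper states that this proposition ``ensues from Corollary \ref{symmetriccor}'' by the same column-localization method described just before Theorem \ref{rangerestricterlinear}, which is exactly what you carry out. Your explicit remark that the two generating families in Corollary \ref{symmetriccor}(c) are subgroups of the abelian group $\Hom(\calS,\K^n)$ (so that ``generated by'' becomes a genuine sum decomposition) is a worthwhile detail that the paper leaves implicit.
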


\subsection{Range preservers}\label{rangepreservingsection}

Now, we determine the range-preserving homomorphisms
on an operator space with small codimension.
In terms of matrices, there are two obvious examples of range-preserving linear maps:
given a linear subspace $\calS$ of $\Mat_{n,p}(\K)$, together with an integer $q \geq p$ and a non-singular matrix
$Q \in \GL_p(\K)$, the maps
$$M \in \calS \longmapsto \begin{bmatrix}
M & [0]_{n \times (q-p)}
\end{bmatrix} \in \Mat_{n,q}(\K)$$
and
$$M \in \calS \longmapsto MQ \in \Mat_{n,p}(\K)$$
are linear range preservers.
Composing maps of these two types, we obtain the \emph{standard} range preservers
$$M \in \calS \longmapsto \begin{bmatrix}
M & [0]_{n \times (q-p)}
\end{bmatrix}\times Q \in \Mat_{n,q}(\K), \quad \text{with $Q \in \GL_q(\K)$.}$$

\label{constructrangepreserver}
Let us explain how to use non-local range-compatible homomorphisms
in order to construct non-standard range-preserving homomorphisms.
Let $r \geq q \geq 1$ be integers and let $F_1,\dots,F_r$ be range-compatible homomorphisms on
a matrix space $\calS \subset \Mat_{n,p}(\K)$.
Then, we consider the space $\calT:=\calS \vee \Mat_{m,q}(\K)$ and the homomorphism
$$G : M=\begin{bmatrix}
A(M) & [?]_{n \times q} \\
[0]_{m \times p} & [?]_{m \times q}
\end{bmatrix}\in \calT \longmapsto
\begin{bmatrix}
M & [0]_{(m+n) \times (r-q)}
\end{bmatrix}+
\begin{bmatrix}
[0]_{n \times p} & F_1(A(M)) & \cdots & F_r(A(M)) \\
[0]_{m \times p} & [0]_{m \times 1} & \cdots & [0]_{m \times 1} \\
\end{bmatrix}.$$
Let $M \in \calS$. For all $i \in \lcro 1,q\rcro$, as $F_i$ is range-compatible,
we see that the $(p+i)$-th column of $G(M)$ is the sum of the $(p+i)$-th column of $M$
with a linear combination of the first $p$ columns of $M$; moreover, for all $i \in \lcro q+1,r\rcro$, the $(p+i)$-th
column of $G(M)$ is a linear combination of the first $p$ columns of $G(M)$.
On the other hand, the matrices $M$ and $G(M)$ share the same first $p$ columns.
Therefore, $G(M)$ and $M$ have the same range.
Moreover, if all the maps $F_i$ are linear, then $G$ is linear.

If all the $F_i$ maps are local, it is easy to show that $G$ is standard.
Conversely, assume that $G$ is standard. Fixing $i \in \lcro 1,r\rcro$
and denoting by $e_{p+i}$ the $(p+i)$-th vector of the standard basis of $\K^{p+r}$,
we see that $M \mapsto G(M) e_{p+i}$ must be local, and one deduces that $F_i$ is local.

Therefore, if we can find at least one non-local range-compatible homomorphism (respectively,
linear map) on $\calS$, then for all integers $r \geq q \geq 1$ and $m \geq 0$, there is a
range-preserving homomorphism (respectively, linear map) from $\calS \vee \Mat_{m,q}(\K)$
to $\calS \vee \Mat_{m,r}(\K)$ that is non-standard.
Using the examples of non-local range-compatible linear maps given in the introduction,
we deduce that, for the space $\calU$ of all $2 \times 2$ upper-triangular matrices with equal diagonal entries,
there is, for all $p \geq 1$ and all $n \geq 0$,
a non-standard range-preserving linear map from
$\calU \vee \Mat_{n,p}(\K)$ into itself.
More precisely, an example of such a map is
$$M=\begin{bmatrix}
a & b & [?]_{1 \times p} \\
0 & a & [?]_{1 \times p} \\
[0]_{n \times 1} & [0]_{n \times 1} &  [?]_{n \times p}
\end{bmatrix} \longmapsto
M+\begin{bmatrix}
0 & 0 & b & [0]_{1 \times (p-1)} \\
0 & 0 & 0 & [0]_{1 \times (p-1)} \\
[0]_{n \times 1} & [0]_{n \times 1} & [0]_{n \times 1} & [0]_{n \times (p-1)}
\end{bmatrix}.$$
Similarly, there is, for all $p \geq 1$ and all $n \geq 0$,
a non-standard range-preserving linear map from
$\Mats_2(\F_2) \vee \Mat_{n,p}(\F_2)$ into itself: an example of such a map is
$$M=\begin{bmatrix}
a & b & [?]_{1 \times p} \\
b & c & [?]_{1 \times p} \\
[0]_{n \times 1} & [0]_{n \times 1} &  [?]_{n \times p}
\end{bmatrix} \longmapsto
M+\begin{bmatrix}
0 & 0 & a & [0]_{1 \times (p-1)} \\
0 & 0 & c & [0]_{1 \times (p-1)} \\
[0]_{n \times 1} & [0]_{n \times 1} & [0]_{n \times 1} & [0]_{n \times (p-1)}
\end{bmatrix}.$$

Now, under the assumptions of Theorem \ref{maintheolin}, we can describe all the range-preserving homomorphisms.

\begin{theo}\label{rangepreservinggeneral1}
Let $\calS$ be a linear subspace of $\Mat_{n,p}(\K)$ with codimension at most $d_n(\K)$.
Assume that no non-zero vector $x$ of $\K^p$ satisfies $\calS x=\{0\}$. \\
Then:
\begin{enumerate}[(a)]
\item The linear range-preserving maps from $\calS$ to $\Mat_{n,q}(\K)$
are the maps of the form
$$M \mapsto \begin{bmatrix}
M & [0]_{n \times (q-p)}
\end{bmatrix} \times Q \quad \text{with $Q \in \GL_q(\K)$.}$$
\item If $\calS$ has none of Types 1 to 3, then the range-preserving homomorphisms are the same.
\end{enumerate}
\end{theo}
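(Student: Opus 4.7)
The plan is to reduce the theorem to Theorem \ref{rangerestricterlinear} and then show that the matrix realizing $F$ as a right-composition has full row rank, after which completion to an invertible matrix is straightforward.

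First I would note that a range-preserving map is in particular range-restricting. In case (a), $F$ is linear, and in case (b) the hypothesis that $\calS$ has none of Types 1--3 applies; either way, Theorem \ref{rangerestricterlinear} yields a matrix $P \in \Mat_{p,q}(\K)$ with $F(M)=MP$ for every $M \in \calS$ (in particular, in case (b), $F$ is automatically linear). The remaining task is to show $\rk P = p$, because then $q \geq p$ and the rows of $P$ can be completed into an invertible $Q \in \GL_q(\K)$, giving the identity $F(M)=\begin{bmatrix} M & [0]_{n \times (q-p)} \end{bmatrix} Q$.

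To pin down $\rk P$, I would rephrase the range-preserving condition. Since $\im(MP)=M(\im P)$ and $\im M = M(\K^p)$, equality of these for all $M \in \calS$ is equivalent to $\im P + \Ker M = \K^p$ for every $M \in \calS$. Set $W := \im P$, and argue by contradiction, assuming $W \subsetneq \K^p$. Pick $y_0 \in \K^p \setminus W$; then the displayed condition forces $M y_0 \in M(W)$ for every $M \in \calS$. After a change of basis of $\K^p$ one may assume $W = \Vect(e_1,\dots,e_k)$ with $k = \dim W$, and write each $M \in \calS$ in block form $[M_1 \mid M_2]$ with $M_1 \in \Mat_{n,k}(\K)$.

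The key step is to reflect the problem onto the operator space $\calS|_W := \{M_1 : M \in \calS\} \subseteq \Mat_{n,k}(\K)$. The elementary inequality $\dim \pi_1(\calS) \geq \dim \calS - \dim \Ker \pi_1$ for $\pi_1 : M \mapsto M_1$ shows that $\codim_{\Mat_{n,k}} \calS|_W \leq \codim \calS \leq d_n(\K)$. The linear map $\psi : \calS \to \K^n$, $M \mapsto My_0$, vanishes on $\Ker \pi_1$ (since $M_1=0$ forces $M(W)=0$ and hence $\psi(M)=0$), so it factors as $\psi = \tilde\psi \circ \pi_1$, where $\tilde\psi : \calS|_W \to \K^n$ is linear and range-compatible (as $\tilde\psi(M_1) = My_0 \in M(W) = \im M_1$). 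If $k \geq 2$, Theorem \ref{maintheolin} provides $w_0 \in \K^k$ with $\tilde\psi(M_1)=M_1 w_0$; if $k=1$, Corollary \ref{dimU=1} gives the same conclusion. Lifting $w_0$ to $\tilde w_0 \in W \subseteq \K^p$, one obtains $M(y_0-\tilde w_0)=0$ for every $M \in \calS$, so $y_0-\tilde w_0 \in \bigcap_{M \in \calS}\Ker M = \{0\}$ by hypothesis. Hence $y_0 \in W$, a contradiction.

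I expect the main obstacle to be the codimension bookkeeping that underlies the reduction: verifying that $\calS|_W$ still satisfies the codimension bound required to invoke Theorem \ref{maintheolin}, and handling the edge case $k=1$ via Corollary \ref{dimU=1}. Once $\rk P = p$ is established, completing the rows of $P$ to an invertible $q \times q$ matrix $Q$ is routine, and the identity $[M \mid 0]\,Q = MP = F(M)$ concludes the argument in both cases.
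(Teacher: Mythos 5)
Your proposal is correct, and the skeleton (reduce to Theorem \ref{rangerestricterlinear}, prove $\rk P=p$, complete the rows of $P$ to $Q\in\GL_q(\K)$) matches the paper's. But your argument for the crucial step $\rk P=p$ is genuinely different. The paper argues by contradiction on $r:=\rk P<p$, normalizes $P=Q_1J_rQ_2$, observes that truncation to the first $r$ columns gives an injective linear range preserver $G$ on $\calS Q_1$, and then applies Theorem \ref{rangerestricterlinear} a \emph{second} time, to the inverse map $G^{-1}:K(\calS Q_1)\to\calS Q_1$, to produce $R\in\Mat_{r,p}(\K)$ with $M=K(M)R$; a nonzero $x\in\Ker R$ then contradicts the hypothesis. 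You instead set $W:=\im P$, rephrase range preservation as $W+\Ker M=\K^p$ for all $M\in\calS$, and show that for $y_0\notin W$ the evaluation $M\mapsto My_0$ descends to a range-compatible linear map on the restricted space $\calS|_W$, to which Theorem \ref{maintheolin} (or Corollary \ref{dimU=1} when $\dim W=1$) applies directly; locality then forces $y_0\in W$ modulo $\bigcap_{M\in\calS}\Ker M=\{0\}$. Your route is arguably more direct, invoking the core classification theorem once rather than passing through the inverse of a range preserver, and your well-definedness check for $\tilde\psi$ (that $M_1=0$ forces $My_0\in M(W)=\{0\}$ via range preservation) and the codimension bookkeeping $\codim\calS|_W\le\codim\calS$ are both sound; the paper's version has the mild advantage of reusing the already-packaged Theorem \ref{rangerestricterlinear} verbatim and not needing the $\dim W=1$ case split. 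Two small points to tidy up: you should note (as the paper does) that the maps of the stated form are indeed range preservers, and you may dispose of $W=\{0\}$ by remarking that it would force $\calS=\{0\}$, contradicting the hypothesis on kernels.
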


\begin{proof}
We already know that the cited maps are linear range preservers.
Conversely, let $F : \calS \rightarrow \Mat_{n,q}(\K)$ be a range-preserving homomorphism.
Assume either that $F$ is linear, or that $\calS$ is of none of Types 1 to 3.
In any case, as $F$ is a range-restricting homomorphism, Theorem \ref{rangerestricterlinear} yields some $P \in \Mat_{p,q}(\K)$
such that $F : M \mapsto MP$. Let us prove that $P$ has rank $p$.
Assume that this is not the case, set $r:=\rk P$ and split $P=Q_1 J_r Q_2$
with $J_r:=\begin{bmatrix}
I_r & [0]_{r \times (q-r)} \\
[0]_{(p-r) \times r} & [0]_{(p-r) \times (q-r)}
\end{bmatrix}$, and $Q_1 \in \GL_p(\K)$ and $Q_2 \in \GL_q(\K)$.
Then, we deduce that
$$M \in \calS Q_1 \longmapsto M J_r \in \Mat_{n,q}(\K)$$
is range-preserving.
For all $M \in \calS Q_1$, we write $M=\begin{bmatrix}
K(M) & [?]_{n \times (p-r)}
\end{bmatrix}$ with $K(M) \in \Mat_{n,r}(\K)$ and we deduce from the above proof that
$G : M \in \calS Q_1 \mapsto K(M)$ is range-preserving (and it is obviously linear).
In particular, this map is one-to-one: we deduce that
$G^{-1} : K(\calS Q_1) \rightarrow \calS Q_1$ is a linear range-preserving map
and $\codim K(\calS Q_1) \leq \codim \calS \leq d_n(\K)$, which yields a matrix $R \in \Mat_{r,p}(\K)$ such that
$M=K(M) R$ for all $M \in \calS Q_1$. As $r<p$, we can choose a non-zero vector $x \in \Ker R$,
whence $Mx=0$ for all $M \in \calS Q_1$, and finally $N(Q_1x)=0$ for all $N \in \calS$, contradicting our assumptions.

We conclude that $\rk P=p$, whence we can find $Q \in \GL_q(\K)$ such that $P=J_p Q$.
From there, we deduce that
$$\forall M \in \calS, \quad F(M)=\begin{bmatrix}
M & [0]_{n \times (q-p)}
\end{bmatrix} \times Q.$$
\end{proof}

\begin{prop}\label{rangepreservinggeneral2}
Let $\calS$ be a linear subspace of $\Mat_{n,p-1}(\K) \coprod \{0\}$ with codimension at most $2n-3$ in
$\Mat_{n,p}(\K)$. Then, the range-preserving homomorphisms from $\calS$ to $\Mat_{n,q}(\K)$
are the maps of the form
$$\begin{bmatrix}
N & [0]_{n \times 1}
\end{bmatrix} \longmapsto \begin{bmatrix}
N & [0]_{n \times (q-p+1)}
\end{bmatrix} \times Q$$
for some $Q \in \GL_q(\K)$ (thus, such a map exists if and only if $q \geq p-1$).
\end{prop}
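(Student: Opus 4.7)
The plan is to reduce to the First Classification Theorem (Theorem \ref{classtheo1}) by projecting away the forced-zero last column, and then to pin down the rank of the resulting multiplier matrix via the range-preservation hypothesis.

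First, I would write each $M \in \calS$ as $M = \begin{bmatrix} N & 0_{n \times 1} \end{bmatrix}$ with $N$ in a linear subspace $\calS' \subset \Mat_{n,p-1}(\K)$. A dimension count yields $\codim_{\Mat_{n,p-1}(\K)} \calS' = \codim_{\Mat_{n,p}(\K)} \calS - n \leq n - 3$, which is below the threshold $n-2$ of the First Classification Theorem. Each column of a range-preserving homomorphism $F : \calS \to \Mat_{n,q}(\K)$ is a range-compatible homomorphism $\calS \to \K^n$ (since every column of $F(M)$ lies in $\im F(M) = \im M$), and because $\im M = \im N$, it transports via the canonical linear isomorphism $M \leftrightarrow N$ to a range-compatible homomorphism on $\calS'$. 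By Theorem \ref{classtheo1}, each such column is local, so there exists $P \in \Mat_{p-1,q}(\K)$ with $F(M) = NP$ for every $M = \begin{bmatrix} N & 0 \end{bmatrix} \in \calS$.

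The crucial step is then to show $\rk P = p-1$, which will force $q \geq p-1$ and, upon choosing $Q \in \GL_q(\K)$ with $P = \begin{bmatrix} I_{p-1} & 0_{(p-1) \times (q-p+1)} \end{bmatrix} Q$, will deliver the announced form $F(M) = \begin{bmatrix} N & 0_{n \times (q-p+1)} \end{bmatrix} Q$. Here I would exploit the fact that a range preserver is automatically injective: $F(M)=0$ gives $\im M = \{0\}$ and hence $M = 0$. If $\rk P < p-1$, I would pick a nonzero $w \in \K^{p-1}$ annihilating $P$ on the left ($w^T P = 0$); the $n$-dimensional space $\{v w^T : v \in \K^n\}$ of rank-one matrices would then intersect $\calS'$ in dimension at least $n - (n-3) = 3 > 0$, producing a nonzero $N \in \calS'$ with $NP = 0$ and contradicting injectivity of $F$. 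The converse direction of the proposition is immediate since right-multiplication by an invertible matrix preserves the column space. The main obstacle is the rank step, but the codimension budget $n-3$ is tailored precisely so that the intersection with the rank-one slice $\{v w^T\}$ must be nontrivial, so I expect the argument to go through cleanly.
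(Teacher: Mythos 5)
Your proposal is correct. The overall route coincides with the paper's: both proofs strip off the forced-zero last column to land in a subspace $\calS'\subset\Mat_{n,p-1}(\K)$ of codimension at most $n-3$, use the classification of range-compatible homomorphisms in that codimension range to write $F(M)=NP$ for a fixed $P\in\Mat_{p-1,q}(\K)$, and then argue that $P$ must have rank $p-1$. (You also correctly note that $F(M)=0$ forces $M=0$ for a range preserver, and that a full-rank $P$ factors as $\begin{bmatrix} I_{p-1} & 0\end{bmatrix}Q$ with $Q\in\GL_q(\K)$.)

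Where you diverge is in the rank step, and your version is the more self-contained one. The paper handles this proposition by reducing to its Theorem \ref{rangepreservinggeneral1}, whose full-rank argument is somewhat roundabout: it truncates $M$ to its first $r=\rk P$ columns, observes that this truncation is itself an injective range preserver, inverts it, applies the range-restricting classification a second time to the inverse, and derives a contradiction with the hypothesis that no nonzero vector of the source is annihilated by the whole space. You instead pick a nonzero $w$ with $w^TP=0$ and intersect $\calS'$ with the $n$-dimensional slice $\{vw^T : v\in\K^n\}$ of matrices with row space in $\K w^T$; since $\codim\calS'\leq n-3$, this intersection has dimension at least $3$, producing a nonzero $N$ with $NP=0$ and contradicting injectivity directly. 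This is exactly the device the paper deploys later in the proof of Theorem \ref{rangepreservingtype1}, so it is entirely in the spirit of the article, and it buys you a shorter, one-pass argument that avoids the detour through the inverse map. Both arguments ultimately rest on the same resource, namely that the codimension budget $n-3$ is too small for $\calS'$ to miss an $n$-dimensional coordinate-like slice.
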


\begin{proof}
It is obvious that the cited maps are range-preserving homomorphisms from $\calS$ to $\Mat_{n,q}(\K)$.  \\
Conversely, let us write $M=\begin{bmatrix}
K(M) & [0]_{n \times 1}
\end{bmatrix}$ for all $M \in \calS$. Let $F : \calS \rightarrow \Mat_{n,q}(\K)$ be a range-preserving homomorphism.
Then, $N \in K(\calS) \longmapsto F(K^{-1}(N)) \in \Mat_{n,q}(\K)$ is obviously a range-preserving homomorphism.
As $\codim K(\calS) \leq n-3$, we see that $K(\calS)$ has none of Types 1 to 3 and that $\codim K(\calS) \leq d_n(\K)$,
whence Theorem \ref{rangerestricterlinear} yields $Q \in \GL_q(\K)$ such that
$$\forall N \in K(\calS), \; F(K^{-1}(N))=\begin{bmatrix}
N & [0]_{n \times (q-p+1)}
\end{bmatrix} \times Q,$$
and the claimed result ensues.
\end{proof}

Now, we tackle spaces of Types 1, 2 or 3. The last two will be dealt with simultaneously.

\begin{theo}\label{rangepreservingtype1}
Let $\calS$ be a linear subspace of $\Mat_{n,p}(\K)$ with $\codim \calS \leq 2n-3$.
Assume that $\calS$ is included in $\K \vee \Mat_{n-1,p-1}(\K)$ but not in
$\{0\} \coprod \Mat_{n,p-1}(\K)$. \\
Then, the range-preserving homomorphisms from $\calS$ to $\Mat_{n,q}(\K)$
are the maps of the form
$$M=\begin{bmatrix}
m_{1,1} & L(M) \\
[0]_{(n-1) \times 1} & K(M)
\end{bmatrix} \longmapsto
\begin{bmatrix}
R(m_{1,1}) & L(M)+R'(m_{1,1}) \\
[0]_{(n-1) \times (q-p+1)} & K(M)
\end{bmatrix} \times Q,$$
where $Q \in \GL_q(\K)$, and $R : \K \rightarrow \Mat_{1,q-p+1}(\K)$ and
$R' : \K \rightarrow \Mat_{1,p-1}(\K)$ are homomorphisms such that $R$ is injective.
Such homomorphisms exist if and only if $q \geq p$.
\end{theo}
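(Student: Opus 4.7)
The plan is to combine Proposition \ref{rangerestrictertype1}, which already pins down the overall shape of any range-restricting homomorphism on $\calS$, with an application of Theorem \ref{rangepreservinggeneral1} to a suitable projection of $\calS$; the extra injectivity of $R$ is then extracted from the range-preserving condition itself.

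First I would verify the forward direction. Additivity of the displayed map follows from the additivity of $R$ and $R'$ together with the linearity of $L$ and $K$. For range-preservation, separate cases on $m_{1,1}$: if $m_{1,1}=0$ then $R(m_{1,1})=R'(m_{1,1})=0$ so the inner matrix agrees with $M$ up to zero columns; if $m_{1,1}\neq 0$ then $e_1\in\im M$ via the first column $m_{1,1}e_1$, the first $q-p+1$ columns of the inner matrix are scalar multiples of $e_1$, and the last $p-1$ columns differ from the corresponding columns of $M$ only by $e_1$-shifts. The hypothesis that $R$ is injective then forces $R(m_{1,1})\neq 0$, placing $e_1$ in the range of the inner matrix, whence the two ranges coincide; right multiplication by $Q\in\GL_q(\K)$ preserves column spans.

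For the converse, let $F\colon\calS\to\Mat_{n,q}(\K)$ be a range-preserving homomorphism. Proposition \ref{rangerestrictertype1} yields $P\in\Mat_{p,q}(\K)$ and a group homomorphism $R_0\colon\K\to\Mat_{1,q}(\K)$ with
\[
F(M)=MP+\begin{bmatrix}R_0(m_{1,1})\\ [0]_{(n-1)\times q}\end{bmatrix}.
\]
Restricting to $\calS_0:=\calS\cap\{m_{1,1}=0\}$ kills the $R_0$-contribution; deleting the zero first column identifies $\calS_0$ with a linear subspace $\calS_0'\subset\Mat_{n,p-1}(\K)$, and the induced map $N\mapsto NP_2$, where $P_2$ is the bottom $(p-1)\times q$ block of $P$, is a linear range-preserving map on $\calS_0'$. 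A direct count based on the inclusion $\calS\subset\K\vee\Mat_{n-1,p-1}(\K)$ and the assumption $\codim\calS\leq 2n-3$ gives $\codim_{\Mat_{n,p-1}(\K)}\calS_0'\leq n-2\leq d_n(\K)$, and any non-zero $x\in\K^{p-1}$ with $\calS_0'x=0$ would trap $\calS_0'$ inside a subspace of codimension $n$, contradicting this bound. Theorem \ref{rangepreservinggeneral1}(a) then supplies $Q\in\GL_q(\K)$ with $NP_2=[N\,\,[0]_{n\times(q-p+1)}]Q$ on $\calS_0'$, and the same null-vector analysis upgrades this to the matrix identity $P_2=[I_{p-1}\,\,[0]_{(p-1)\times(q-p+1)}]Q$. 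Replacing $F$ by $F(\cdot)Q^{-1}$ and splitting the first row of $P$ and the row vector $R_0(m_{1,1})$ along the $(p-1)+(q-p+1)$ block decomposition, a routine block computation brings $F$ into the stated form, with $R$ and $R'$ the resulting homomorphisms and a column permutation absorbed into a redefinition of $Q$.

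The main obstacle is the injectivity of $R$. Assume $R(m)=0$ for some $m\in\K\setminus\{0\}$ and pick $M^*\in\calS$ with $(M^*)_{1,1}=m$, which exists thanks to the non-inclusion hypothesis $\calS\not\subset\{0\}\coprod\Mat_{n,p-1}(\K)$. Since $e_1\in\im M^*$ but the first $q-p+1$ columns of $F(M^*)Q^{-1}$ vanish, range-preservation requires the row vector $L(M^*)+R'(m)$ to lie outside the row space of $K(M^*)$, and the same obstruction must persist throughout the entire affine coset $M^*+\calS_0$. Using the codimension bound on $\calS_0'$ and a rank analysis of the map $M_1\mapsto\begin{bmatrix}L(M^*+M_1)+R'(m)\\ K(M^*+M_1)\end{bmatrix}$ on that coset, one locates an element at which the top row does fall into the row space of the bottom block, yielding the desired contradiction and forcing $R$ to be injective. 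Finally, the existence clause $q\geq p$ follows because an injective group homomorphism $\K\to\Mat_{1,q-p+1}(\K)$ demands $q-p+1\geq 1$.
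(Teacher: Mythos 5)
Your forward direction, your reduction of $F$ to the stated block form via Proposition \ref{rangerestrictertype1} and Theorem \ref{rangepreservinggeneral1}(a) applied to $\calS_0'$, and your existence clause are all sound, and up to that point you are essentially following the paper's route (the paper argues directly that the bottom block $P_1$ of $P$ has rank $p-1$ using injectivity of $F$, which amounts to the same thing). The problem is the injectivity of $R$, which is the genuinely hard step of this theorem. You correctly reduce it to the following claim: if $R(m)=0$ with $m\neq 0$ and $M^*\in\calS$ has $(M^*)_{1,1}=m$, then some element of the affine coset
$$\calA:=\begin{bmatrix} L(M^*)+R'(m) \\ K(M^*) \end{bmatrix}+\calS_0' \subset \Mat_{n,p-1}(\K)$$
has its first row in the row space of its last $n-1$ rows (equivalently, does not contain $e_1$ in its range). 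But you then dispose of this claim by invoking ``a rank analysis,'' which is not an argument: the set of matrices whose first row lies in the row space of the remaining rows is not cut out by rank conditions in any way that lets you intersect it with an affine subspace of codimension $\leq n-2$ by dimension counting, and the claim is false without the codimension hypothesis, so some real input is needed. This is precisely where the paper has to deploy the Generalized First Classification Theorem \ref{generalizedfirsttheo}: it forms the \emph{subgroup} $G=\{M\in\calS:\,R(m_{1,1})=0\}$, observes that $F$ restricts to a range-preserving bijection $F'$ of $G$ onto a subgroup $G'$ of $\Mat_{n,p-1}(\K)$ containing the linear subspace $H(\calT)$ of codimension $\leq n-2$, and applies Theorem \ref{generalizedfirsttheo} to the range-compatible homomorphism $N\mapsto(\text{first column of }(F')^{-1}(N))$ to conclude it is local and hence zero.

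Your single-coset claim is in fact true and can be proved in the same spirit, but the idea is absent from your write-up: setting $N_0$ for the base point of $\calA$ and $W:=\calS_0'+\K N_0$ (so $\codim W\leq n-3$ once $N_0\notin\calS_0'$; if $N_0\in\calS_0'$ then $0\in\calA$ and you are done), the hypothesis ``$e_1\in\im N$ for every $N\in\calA$'' makes the \emph{linear} map $\lambda N_0+V\mapsto \lambda e_1$ range-compatible on $W$, whence it is local by Theorem \ref{nonoptimaltheo}; the resulting vector $x$ satisfies $\calS_0'x=\{0\}$ and $N_0x=e_1\neq 0$, contradicting $\codim\calS_0'\leq n-2$. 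Without this (or the paper's subgroup argument), the proof of injectivity of $R$ — and hence of the theorem — is incomplete.
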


\begin{proof}
Let $Q \in \GL_q(\K)$, and let $R : \K \rightarrow \Mat_{1,q-p+1}(\K)$ and
$R' : \K \rightarrow \Mat_{1,p-1}(\K)$ be homomorphisms
such that $R$ is injective. Set
$$F : M=\begin{bmatrix}
m_{1,1} & L(M) \\
[0]_{(n-1) \times 1} & K(M)
\end{bmatrix} \in \calS \longmapsto
\begin{bmatrix}
R(m_{1,1}) & L(M)+R'(m_{1,1}) \\
[0]_{(n-1) \times (q-p+1)} & K(M)
\end{bmatrix} \times Q.$$
Then, $F$ is obviously a group homomorphism.
Let $M \in \calS$. If $m_{1,1}=0$, then $F(M)=\begin{bmatrix}
[0]_{n \times (q-p)} & M
\end{bmatrix} \times Q$ has the same range as $M$.
If $m_{1,1} \neq 0$, then $R(m_{1,1}) \neq 0$ and hence we can use a series of elementary column operations
to turn $\begin{bmatrix}
R(m_{1,1}) & L(M)+R'(m_{1,1}) \\
[0]_{(n-1) \times (q-p+1)} & K(M)
\end{bmatrix}$ into $\begin{bmatrix}
[0]_{n \times (q-p)} & M
\end{bmatrix}$, whence $F(M)$ has the same range as $M$.

Conversely, let $F : \calS \rightarrow \Mat_{n,q}(\K)$ be a range-preserving homomorphism.
Denote by $\calT$ the space of all matrices of $\calS$ with first column zero, and write every such matrix
$N \in \calT$ as
$N=\begin{bmatrix}
[0]_{n \times 1} & H(N)
\end{bmatrix}$. Note that $\codim H(\calT) \leq n-2$.
By Proposition \ref{rangerestrictertype1}, we can find a matrix $P \in \Mat_{p,q}(\K)$ together with a
group homomorphism $J : \K \rightarrow \Mat_{1,q}(\K)$ such that
$$F : M \longmapsto MP+\begin{bmatrix}
J(m_{1,1}) \\
[0]_{(n-1) \times q}
\end{bmatrix}.$$
Let us write $P=\begin{bmatrix}
[?]_{1 \times q} \\
P_1
\end{bmatrix}$
with $P_1 \in \Mat_{p-1,q}(\K)$.
In particular, we see that
$$\forall N \in \calT, \; F(N)=H(N)P_1.$$
If $P_1$ does not have rank $p-1$, we choose a non-zero row $Y \in \Mat_{1,p-1}(\K)$
such that $YP_1=0$, and we note that the space of all matrices of $\Mat_{n,p-1}(\K)$
with row space included in $\K Y$ has dimension $n$, whence it must contain a non-zero element $M$ of $H(\calT)$,
leading to $M P_1=0$. This would contradict the injectivity of $F$.
Thus, $P_1$ has rank $p-1$, to the effect that there is a matrix $Q \in \GL_q(\K)$ such that
$$P=\begin{bmatrix}
[?]_{1 \times (q-p+1)} & [?]_{1 \times (p-1)} \\
[0]_{(p-1) \times (q-p+1)} & I_{p-1}
\end{bmatrix} \times Q.$$
From there, we obtain two group homomorphisms $R : \K \rightarrow \Mat_{1,q-p+1}(\K)$ and
$R' : \K \rightarrow \Mat_{1,p-1}(\K)$ such that
$$F : M=\begin{bmatrix}
m_{1,1} & L(M) \\
[0]_{(n-1) \times 1} & K(M)
\end{bmatrix} \longmapsto
\begin{bmatrix}
R(m_{1,1}) & L(M)+R'(m_{1,1}) \\
[0]_{(n-1) \times (q-p+1)} & K(M)
\end{bmatrix} \times Q.$$
It remains to prove that $R$ is injective.
Denote by $G$ the subgroup of $\calS$ consisting of all the matrices $M$ for which $R(m_{1,1})=0$.
Note that $G$ contains the linear subspace $\calT$.
For every $M \in G$, we write $F(M)=\begin{bmatrix}
[0]_{n \times (q-p+1)} & F'(M)\end{bmatrix} \times Q$, and $F' : G \rightarrow \Mat_{n,p-1}(\K)$ is a range-preserving homomorphism whose
image we denote by $G'$. Now, let us consider the map $f : G' \rightarrow \K^n$
which assigns to every $N \in G'$ the first column of $(F')^{-1}(N)$.
One sees that $f$ is a range-compatible homomorphism.
Moreover, $G'$ contains $H(\calT)$, which is a linear subspace of $\Mat_{n,p-1}(\K)$
with codimension at most $n-2$. We deduce from Theorem \ref{generalizedfirsttheo}
that $f : N \mapsto NY$ for some $Y \in \K^{p-1}$.
With the above notation, it follows that $K(N)Y=0$ for all $N \in \calT$.
If $Y \neq 0$, this would lead to $\codim K(\calT)\geq n-1$, contradicting
$\codim K(\calT) \leq \codim H(\calT) \leq n-2$.
It follows that $f=0$, and in particular $m_{1,1}=0$ for all $M \in \calS$ satisfying $R(m_{1,1})=0$.
This completes the proof.
\end{proof}

\begin{theo}\label{rangepreservingtype23}
Assume that $\K$ has characteristic $2$.
Let $r \in \lcro 2,\min(n,p)\rcro$, and set $\calS:=\Mats_r(\K) \vee \Mat_{n-r,p-r}(\K)$.
Then, the range-preserving homomorphisms from $\calS$ to $\Mat_{n,q}(\K)$
are the maps of the form
$$M \longmapsto
\left(
\begin{bmatrix}
M & [0]_{n \times (q-p)}
\end{bmatrix}+
\begin{bmatrix}
[0]_{1 \times r} & R(m_{1,1}) \\
\vdots & \vdots \\
[0]_{1 \times r} & R(m_{r,r}) \\
[0]_{(n-r) \times r} & [0]_{(n-r) \times (q-r)}
\end{bmatrix}
\right)
\times Q$$
where $Q \in \GL_q(\K)$, and $R : \K \rightarrow \Mat_{1,q-r}(\K)$ is a root-linear map.
\end{theo}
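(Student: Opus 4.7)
The plan is to verify the forward direction by direct computation and to establish the converse through Proposition \ref{rangerestrictersym} combined with a careful injectivity analysis.

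For the forward direction, write the proposed map as $F(M) = ([M\,|\,0] + X(M))\,Q$, with $X(M)$ placing $R(m_{i,i})$ in columns $r+1, \ldots, q$ of row $i$ (for $i \leq r$) and zero elsewhere. Additivity of $R$ makes $F$ a group homomorphism. For range preservation, the first $r$ columns of $[M\,|\,0] + X(M)$ are unchanged and span $\im S \times \{0\}$ (where $S$ is the top-left symmetric block of $M$), while each remaining column gains a correction vector supported in the top $r$ rows whose $i$-th entry is $R(m_{i,i})_k$ for some $k$. Theorem \ref{symmetrictheo} guarantees that such a correction vector lies in $\im S$ (the root-linear form $x \mapsto R(x)_k$ furnishes a range-compatible homomorphism on $\Mats_r(\K)$), so the correction is absorbed by $\im M$; right-multiplying by the invertible $Q$ then preserves the image.

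For the converse, let $F$ be a range-preserving homomorphism $\calS \to \Mat_{n,q}(\K)$. Proposition \ref{rangerestrictersym} gives $F(M) = MP + C(M)$ with $P \in \Mat_{p,q}(\K)$ and a root-linear $R_0 : \K \to \Mat_{1,q}(\K)$ controlling the correction. Restrict $F$ to matrices of $\calS$ whose top-left block vanishes: this sub-space identifies with $\Mat_{n, p-r}(\K)$ (by dropping the first $r$ zero columns), on which $F$ acts linearly as $N \mapsto N P'$, where $P'$ is the last $p - r$ rows of $P$. Since the restriction is a range-preserving linear map, Theorem \ref{rangepreservinggeneral1} yields $P' = [I_{p-r}\,|\,0]\,Q_1$ for some $Q_1 \in \GL_q(\K)$; post-composing $F$ with right multiplication by $Q_1^{-1}$, I may assume $P' = [I_{p-r}\,|\,0]$, so $W := \K^{p-r} \times \{0\}$ is the row span of $P'$.

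The crux is to establish $\rk P = p$, equivalently that the residues $\bar v_1, \ldots, \bar v_r \in \K^q / W$ of the top $r$ rows of $P$ are linearly independent. I would analyze the kernel of $F$: a direct computation shows that $F(M) = 0$ forces the last $n - r$ rows of $M$ to vanish and, for the top symmetric block $S$ of $M$, the equations $\bar R_0(S_{i,i}) = \sum_{l} S_{i,l}\,\bar v_l$ must hold in $\K^q / W$ for every $i \leq r$ (where $\bar R_0$ denotes the composition of $R_0$ with the quotient map). Testing this criterion on rank-one symmetric matrices $S = xx^T$ and applying the root-linear identity $\bar R_0(x_i^2) = x_i\,\bar R_0(1)$ collapses the criterion to $x_i\,\bigl(\bar R_0(1) - \sum_{l} x_l\,\bar v_l\bigr) = 0$, forcing $\sum_l x_l\,\bar v_l \neq \bar R_0(1)$ for all non-zero $x \in \K^r$. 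Supplementing with tests on $S = yz^T + zy^T$ (for linearly independent $y, z \in \K^r$) bounds $\dim \ker(x \mapsto \sum_l x_l \bar v_l) \leq 1$; a final round of mixed tests eliminates the residual degenerate cases to yield full linear independence of $\bar v_1, \ldots, \bar v_r$ together with $\bar R_0(\K) \cap \Vect(\bar v_1, \ldots, \bar v_r) = \{0\}$.

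Once $\rk P = p$ is in hand, I would extend $P$ to an invertible $Q \in \GL_q(\K)$ by appending $q - p$ further rows so that the row span of the last $q - r$ rows of $Q$ contains $R_0(\K)$. This is possible because $R_0(\K)$ is a $\K$-subspace of $\K^q$ (root-linearity closes it under scalar multiplication) whose image in $\K^q / W$ is disjoint from $\Vect(\bar v_1, \ldots, \bar v_r)$. Defining $R(x) \in \Mat_{1, q-r}(\K)$ as the coordinates of $R_0(x)$ in the basis given by the last $q - r$ rows of $Q$ yields a root-linear map, and the claimed matrix form falls out by substituting $P$ and $C(M)$ into $F(M) = MP + C(M)$. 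The hard part will be the linear-independence argument, where the root-linear behavior of $\bar R_0$ (as opposed to $\K$-linearity) forces a careful combinatorial tour through rank-one, rank-two, and mixed symmetric test matrices to exclude all configurations in which a single relation among the $\bar v_i$'s could persist alongside a non-trivial $\bar R_0(1)$.
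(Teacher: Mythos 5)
Your forward direction and your final reassembly step are sound, and your starting point for the converse (Proposition \ref{rangerestrictersym}, giving $F(M)=MP+C(M)$ with a root-linear correction) is the same as the paper's. The genuine gap is in what you call the crux: you propose to derive $\rk P=p$, together with $\bar R_0(\K)\cap\Vect(\bar v_1,\dots,\bar v_r)=\{0\}$, solely from the triviality of $\Ker F$. Injectivity of $F$ is strictly weaker than range preservation and is compatible with $\rk P<p$, so no amount of kernel testing can close this step. Concretely, take $n=p=q=r=2$, $\K=\F_4$, $P=E_{1,1}$ and $R_0(x)=\begin{bmatrix}0&\beta(x)\end{bmatrix}$ with $\beta:x\mapsto\sqrt{x}$ (a bijective root-linear form). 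Then
$$F:\begin{bmatrix}a&b\\ b&c\end{bmatrix}\longmapsto \begin{bmatrix}a&\beta(a)\\ b&\beta(c)\end{bmatrix}$$
is a homomorphism of exactly the shape your argument starts from, and $F(M)=0$ forces $a=b=0$ and $\beta(c)=0$, hence $c=0$; so $\Ker F=\{0\}$ while $\rk P=1<p$. In your notation this is precisely the residual case $\dim\ker\bigl(x\mapsto\sum_l x_l\bar v_l\bigr)=1$: it passes your rank-one test (since $\bar R_0(1)\notin\Vect(\bar v_1,\bar v_2)$) and your rank-two test, and it cannot be eliminated by any further kernel test because it genuinely occurs with $\Ker F=\{0\}$. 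Of course this $F$ is not range-preserving --- it sends the rank-$2$ matrix $E_{1,2}+E_{2,1}$ to a rank-$1$ matrix --- which is exactly the information your argument never exploits.

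The repair is to use rank preservation at \emph{non-zero} matrices rather than the kernel. The paper's proof shows that if $LP=0$ for some non-zero row $L$, then $L$ lies in the row space of a $2\times p$ matrix $T$ whose top-left $2\times 2$ block is symmetric with zero diagonal; for the embedding of such a $T$ the correction term vanishes, so $F$ sends it to the embedding of $TP$, and $\rk(TP)<\rk T$ contradicts range preservation. The complementarity statement (your $\bar R_0(\K)\cap\Vect(\bar v_1,\dots,\bar v_r)=\{0\}$, the paper's claim that $\alpha_1,\dots,\alpha_r$ are linear combinations of $\alpha_{p+1},\dots,\alpha_q$) likewise requires comparing $\im F(M_1)$ with $\im M_1$ for suitable non-zero rank-one symmetric $M_1$, not inspecting $\Ker F$. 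A minor additional remark: your normalization of $P'$ via Theorem \ref{rangepreservinggeneral1} is applied to the subspace of matrices with vanishing top-left block, which is $\{0\}$ when $r=p$; that case is allowed by the statement and should be handled separately (it is harmless, but as written the reduction is vacuous there).
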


\begin{proof}
That the maps of the given form are range-preserving homomorphisms is a consequence of
Corollary \ref{symmetriccor} and of the general construction of range-preserving maps
given in the beginning of Section \ref{rangepreservingsection}.

Conversely, let $F : \calS \rightarrow \Mat_{n,q}(\K)$ be a range-preserving homomorphism.
By Proposition \ref{rangerestrictersym}, there is a matrix $P \in \Mat_{p,q}(\K)$ and a root-linear map $R : \K \rightarrow \Mat_{1,q}(\K)$ such that
$$F : M \mapsto MP+\begin{bmatrix}
R(m_{1,1}) \\
\vdots \\
R(m_{r,r}) \\
[0]_{(n-r) \times q}
\end{bmatrix}.$$
The first step consists in showing that $P$ has rank $p$.
Assume that this is not the case. Then, there is a non-zero row matrix $L \in \Mat_{1,p}(\K)$
such that $LP=0$.
Now, denote by $\calT$ the space of all $2 \times p$ matrices of the form
$\begin{bmatrix}
0 & b & [?]_{1 \times (p-2)} \\
b & 0 & [?]_{1 \times (p-2)}
\end{bmatrix}$. Then, for all $T \in \calT$, we see that
$$F\Biggl(\begin{bmatrix}
T \\
[0]_{(n-2) \times p}
\end{bmatrix}\Biggr)=\begin{bmatrix}
T P\\
[0]_{(n-2) \times q}
\end{bmatrix},$$
and if $L$ belongs to the row space of $T$, one finds $\rk(TP) <\rk T$,
contradicting the assumption that $F$ be range-preserving.
However, $L$ belongs to the row space of some matrix in $\calT$.
To see this, we write $L=\begin{bmatrix}
a & b & L'
\end{bmatrix}$ with $(a,b)\in \K^2$ and $L' \in \Mat_{1,p-2}(\K)$.
If $a=b=0$, we see that $L$ belongs to the row space of
$\begin{bmatrix}
0 & 0 & L' \\
0 & 0 & L'
\end{bmatrix}$. If $a\neq 0$, then $L$ belongs to the row space of
$\begin{bmatrix}
0 & a & [0]_{1 \times (p-2)} \\
a & 0 & L'
\end{bmatrix}$. If $b \neq 0$, then  $L$ belongs to the row space of
$\begin{bmatrix}
0 & b & L' \\
b & 0 & [0]_{1 \times (p-2)}
\end{bmatrix}$.
Thus, a contradiction arises in any case. \\
If follows that $P$ has rank $p$,
whence we may find $Q \in \GL_q(\K)$ such that
$P=\begin{bmatrix}
I_p & [0]_{p \times (q-p)}
\end{bmatrix} \times Q$. Thus, we can find a root-linear map $R' : \K \rightarrow \Mat_{1,q}(\K)$ such that
$$\forall M \in \calS, \quad F(M)=
\left(\begin{bmatrix}
M & [0]_{n \times (q-p)}
\end{bmatrix}+
\begin{bmatrix}
R'(m_{1,1}) \\
\vdots \\
R'(m_{r,r}) \\
[0]_{(n-r) \times q}
\end{bmatrix}\right)\times Q.$$
As $M \mapsto F(M)Q^{-1}$ is also a range-preserving homomorphism, we lose no generality in assuming that $Q=I_q$
in the rest of the proof.

Now, we have root-linear form $\alpha_1,\dots,\alpha_q$ on $\K$ such that
$$\forall M \in \calS, \quad F(M)=
\begin{bmatrix}
M & [0]_{n \times (q-p)}
\end{bmatrix}+\begin{bmatrix}
\alpha_1(m_{1,1}) & \alpha_2(m_{1,1}) & \cdots & \alpha_q(m_{1,1})  \\
\alpha_1(m_{2,2}) & \alpha_2(m_{2,2}) & \cdots & \alpha_q(m_{2,2}) \\
\vdots & \vdots & & \vdots \\
\alpha_1(m_{r,r}) & \alpha_2(m_{r,r}) & \cdots & \alpha_q(m_{r,r}) \\
[0]_{(n-r) \times 1} & [0]_{(n-r) \times 1} & \cdots & [0]_{(n-r) \times 1}
\end{bmatrix}.$$
From there, we show that $\alpha_1,\dots,\alpha_r$ are all linear combinations of
$\alpha_{p+1},\dots,\alpha_q$. Let $a \in \underset{k=p+1}{\overset{q}{\bigcap}} \Ker \alpha_k$.
Let us show that $\alpha_1(a)=\cdots=\alpha_r(a)=0$. As this is trivial if $a=0$, we assume that
$a \neq 0$. Set $\lambda:=\alpha_1(a) a^{-1}$ and
$$M_0:=\begin{bmatrix}
\lambda^2 a & [0]_{1 \times (p-1)} \\
[0]_{(n-1) \times 1} & [0]_{(n-1) \times (p-1)}
\end{bmatrix}.$$
As $\alpha_1(\lambda^2 a)+\lambda^2 a=\lambda \alpha_1(a)+\lambda^2 a=0$, while
$\alpha_{p+1},\dots,\alpha_q$ vanish at $\lambda^2 a$, we see that
$$F(M_0)=\begin{bmatrix}
0 & L_0 & L_1 & [0]_{1 \times (q-p)} \\
[0]_{(n-1) \times 1} & [0]_{(n-1) \times (r-1)} & [0]_{(n-1) \times (p-r)} & [0]_{(n-1) \times (q-p)}
\end{bmatrix}$$
for some $L_0 \in \Mat_{1,r-1}(\K)$ and some $L_1 \in \Mat_{1,p-r}(\K)$.
Setting
$$M_1:=\begin{bmatrix}
\lambda^2 a & L_0 & L_1 & [0]_{1 \times (q-p)} \\
L_0^T & [0]_{(r-1) \times (r-1)} & [0]_{(r-1) \times (p-r)} & [0]_{(r-1) \times (q-p)} \\
[0]_{(n-r) \times 1} & [0]_{(n-r) \times (r-1)} & [0]_{(n-r) \times (p-r)} & [0]_{(n-r) \times (q-p)}
\end{bmatrix},$$
we deduce that
$$F(M_1)=\begin{bmatrix}
0 & [0]_{1 \times (q-1)} \\
L_0^T & [0]_{(r-1) \times (q-1)} \\
[0]_{(n-r) \times 1} & [0]_{(n-r) \times (q-1)}
\end{bmatrix}.$$
As $F$ is range-preserving, the first row of $M_1$ must be zero, which yields $\lambda^2 a=0$, whence
$\alpha_1(a)=0$.

With the same line of reasoning, one finds that $\alpha_k(a)=0$ for all $k \in \lcro 1,r\rcro$.
Thus, $\underset{k=p+1}{\overset{q}{\bigcap}} \Ker \alpha_k \subset \Ker \alpha_i$ for all $i \in \lcro 1,r\rcro$.
As $\alpha_1,\dots,\alpha_q$ are linear forms on the vector space $\K^{/2}$,
it ensues that $\alpha_1,\dots,\alpha_r$ are all linear combinations of $\alpha_{p+1},\dots,\alpha_q$.
Therefore, for a well-chosen matrix $Q_1=\begin{bmatrix}
I_r & [0]_{r \times (p-r)} & [0]_{r \times (q-p)} \\
[0]_{(p-r) \times r} & I_{p-r} & [0]_{(p-r) \times (q-p)} \\
[?]_{(q-p) \times r} & [0]_{(q-p) \times (p-r)} & I_{q-p}
\end{bmatrix}$, we have
$$\forall M \in \calS, \quad F(M)Q_1
=\begin{bmatrix}
M & [0]_{n \times (q-p)}
\end{bmatrix}+\begin{bmatrix}
[0]_{1 \times r} & \alpha_{r+1}(m_{1,1}) & \cdots & \alpha_q(m_{1,1})  \\
[0]_{1 \times r} & \alpha_{r+1}(m_{2,2}) & \cdots & \alpha_q(m_{2,2}) \\
\vdots & \vdots & & \vdots \\
[0]_{1 \times r} & \alpha_{r+1}(m_{r,r}) & \cdots & \alpha_q(m_{r,r}) \\
[0]_{(n-r) \times r} & [0]_{(n-r) \times 1} & \cdots & [0]_{(n-r) \times 1}
\end{bmatrix}.$$
Setting $R_1 : a \in \K \mapsto \begin{bmatrix}
\alpha_{r+1}(a) & \cdots & \alpha_q(a)
\end{bmatrix} \in \Mat_{1,q-r}(\K)$, we conclude that $R_1$ is root-linear and
$$\forall M \in \calS, \quad F(M)=
\left(
\begin{bmatrix}
M & [0]_{n \times (q-p)}
\end{bmatrix}+
\begin{bmatrix}
[0]_{1 \times r} & R_1(m_{1,1}) \\
\vdots & \vdots \\
[0]_{1 \times r} & R_1(m_{r,r}) \\
[0]_{(n-r) \times r} & [0]_{(n-r) \times (q-r)}
\end{bmatrix}
\right)
\times Q_1^{-1}.$$
\end{proof}

\subsection{Semi-linear range preservers}

We finish by discussing semi-linear range preservers.

\begin{cor}
Let $\calS$ be a linear subspace of $\Mat_{n,p}(\K)$ with $\codim \calS \leq d_n(\K)$.
Then, the semi-linear range-preserving maps from $\calS$ to $\Mat_{n,p}(\K)$
are the maps of the form $M \mapsto MP$ with $P \in \GL_p(\K)$.
\end{cor}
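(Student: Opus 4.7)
The plan is to apply the classification of range-preserving homomorphisms from Theorems \ref{rangepreservinggeneral1}, \ref{rangepreservingtype1}, \ref{rangepreservingtype23} and Proposition \ref{rangepreservinggeneral2} to any semi-linear range preserver $F$ (viewed as a range-preserving group homomorphism with associated field automorphism $\sigma$), and then exploit the semi-linearity condition $F(\lambda M) = \sigma(\lambda) F(M)$ to force $\sigma = \id$ and to eliminate all non-$\K$-linear contributions. The converse is immediate: for any $P \in \GL_p(\K)$, the map $M \mapsto MP$ is $\K$-linear (hence semi-linear with $\sigma = \id$) and range-preserving because $M$ and $MP$ share the same column space.

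I would split into cases according to the structure of $\calS$. If $\calS$ has none of Types 1 to 3, either Theorem \ref{rangepreservinggeneral1} or Proposition \ref{rangepreservinggeneral2} applies, and with $q = p$ yields $F(M) = MQ$ for some $Q \in \GL_p(\K)$; the equation $\lambda MQ = \sigma(\lambda) MQ$ then forces $\sigma = \id$, and we take $P := Q$. For $\calS$ of Type 1, Theorem \ref{rangepreservingtype1} with $q = p$ gives
$$F(M) = \begin{bmatrix} R(m_{1,1}) & L(M) + R'(m_{1,1}) \\ [0]_{(n-1) \times 1} & K(M) \end{bmatrix} Q,$$
and expanding $F(\lambda M) = \sigma(\lambda) F(M)$ block by block, the bottom-right block gives $\lambda K(M) = \sigma(\lambda) K(M)$ (which forces $\sigma = \id$ once we know $K(\calS) \neq \{0\}$, as a codimension count readily shows for $p \geq 2$); then $R(\lambda m_{1,1}) = \lambda R(m_{1,1})$ and $R'(\lambda m_{1,1}) = \lambda R'(m_{1,1})$ make $R$ and $R'$ both $\K$-linear, so $R(x) = ax$ with $a \in \K^*$ (using injectivity of $R$) and $R'(x) = x L_0$ for some $L_0 \in \Mat_{1, p-1}(\K)$. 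Absorbing these into $Q$, a direct matrix computation confirms that $F(M) = MP$ with $P := \bigl(\begin{smallmatrix} a & L_0 \\ 0 & I_{p-1}\end{smallmatrix}\bigr) Q \in \GL_p(\K)$. Finally, if $\calS$ has Type 2 or 3, then $\K$ has characteristic $2$, and the inequality $d_n(\F_2) = 2n-4 < 2n-3$ rules these types out over $\F_2$, so $\# \K \geq 4$. Theorem \ref{rangepreservingtype23} supplies a root-linear perturbation $R : \K \to \Mat_{1,p-r}(\K)$; semi-linearity on the ``linear'' part $[M \; [0]]$ forces $\sigma = \id$, which then makes $R$ both $\K$-linear and root-linear, hence $R = 0$ by Lemma \ref{linear+rootlinearLemma}. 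Thus $F(M) = MQ$ once more.

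The only delicate point is the book-keeping in the Type 1 case: the linear maps $R$ and $R'$ produced by semi-linearity must be absorbed into the invertible matrix $Q$ in order to recombine them into a single right-multiplication $M \mapsto MP$. No deeper obstacle arises, as the preceding classification theorems have already done the structural work; the small-dimensional edge case $p = 1$, where the block data degenerates, can be handled separately by a direct computation on the one-dimensional space $\calS = \K v$.
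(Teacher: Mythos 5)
Your proposal is correct and follows essentially the same route as the paper: both reduce to the classification theorems for range-preserving homomorphisms (Theorems \ref{rangepreservinggeneral1}, \ref{rangepreservingtype1}, \ref{rangepreservingtype23} and Proposition \ref{rangepreservinggeneral2}) and then exploit a non-zero linear piece of $F$ to force the associated field automorphism to be the identity, after which the residual additive or root-linear perturbations vanish (by injectivity of $R$ in Type 1, and by Lemma \ref{linear+rootlinearLemma} in Types 2 and 3, the $\F_2$ case being excluded by the codimension bound exactly as you note). The paper merely packages this slightly differently, first proving linearity of $F$ by restricting to a well-chosen subspace in each case and then re-invoking the linear classification, whereas you read off $\sigma=\id$ block by block from the explicit formulas; the substance is identical.
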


\begin{proof}
We know that $M\in \calS \mapsto MP$ is a linear range preserver for all $P \in \GL_p(\K)$.
Conversely, let $F : \calS \rightarrow \Mat_{n,p}(\K)$ be a semi-linear range preserver.
Let us prove that $F$ is linear. Since $F$ is range-restricting, we already know from Theorem \ref{rangerestricterlinear} that
it is linear whenever $\calS$ is of none of Types 1 to 3.
\begin{itemize}
\item Assume that $\calS$ has Type 1. Then, we lose no generality in assuming that $\calS$
is a subspace of $\K \vee \Mat_{n-1,p-1}(\K)$ that is not included in $\{0\} \coprod \Mat_{n,p-1}(\K)$.
The linear subspace $\calT$ of all matrices of $\calS$ with first column zero is non-zero since
$\codim \calS \leq 2n-3$. Using Theorem \ref{rangepreservingtype1}, we see that the restriction of $F$ to $\calT$ is linear
and non-zero, whence $F$ is linear.

\item Assume that $\calS$ has Type $2$ or $3$.
In any case, we may assume that, for some $r \in \lcro 2,\min(n,p)\rcro$, we have
$\calS=\Mats_r(\K) \vee \Mat_{n-r,p-r}(\K)$.
Then, Theorem \ref{rangepreservingtype23} shows that the map $A \in \Mata_r(\K) \mapsto
F\left(\begin{bmatrix}
A & [0]_{r \times (p-r)} \\
[0]_{(n-r) \times r} & [0]_{(n-r) \times (p-r)}
\end{bmatrix}\right)$ is non-zero and linear, which yields that $F$ is linear.
\end{itemize}
Thus, $F$ is linear, whence Theorem \ref{rangepreservinggeneral1} and Proposition \ref{rangepreservinggeneral2} (applied to the special case $q=p$)
yield a matrix $P \in \GL_p(\K)$ such that $F: M \mapsto MP$.
\end{proof}

The examples of the beginning of Section \ref{rangepreservingsection} show that the above result is optimal whenever $p \geq 3$.
We conclude with a straightforward application to semi-linear kernel-preserving maps:

\begin{cor}
Let $\calS$ be a linear subspace of $\Mat_{n,p}(\K)$ with $\codim \calS \leq d_p(\K)$.
Then, the semi-linear kernel-preserving maps from $\calS$ to $\Mat_{n,p}(\K)$
are the maps of the form $M \mapsto PM$ with $P \in \GL_n(\K)$.
\end{cor}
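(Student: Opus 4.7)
The plan is to reduce this corollary to the previous one (on semi-linear range preservers) via a transposition duality, mirroring the duality comment made in Section \ref{rangerestrictingsection}. The key point is that for a matrix $A \in \Mat_{n,p}(\K)$, one has $\im A^T = (\ker A)^{\perp}$ (with respect to the standard bilinear pairing on $\K^p$), so $\ker A_1 = \ker A_2$ if and only if $\im A_1^T = \im A_2^T$. Transposition will therefore turn a kernel preserver into a range preserver.

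Concretely, given a semi-linear kernel-preserving map $F : \calS \to \Mat_{n,p}(\K)$, I would introduce the transpose space $\calS^T := \{M^T \mid M \in \calS\} \subset \Mat_{p,n}(\K)$ and the map
$$\widetilde{F} : N \in \calS^T \longmapsto F(N^T)^T \in \Mat_{p,n}(\K).$$
First I would check that $\widetilde{F}$ is semi-linear with respect to the same field automorphism as $F$, which is immediate since transposition is $\K$-linear and intertwines trivially with the scalar action. Next, for each $N \in \calS^T$, the observation above gives $\im \widetilde{F}(N) = (\ker F(N^T))^{\perp} = (\ker N^T)^{\perp} = \im N$, so $\widetilde{F}$ is range-preserving.

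Finally, since $\codim_{\Mat_{p,n}(\K)} \calS^T = \codim_{\Mat_{n,p}(\K)} \calS \leq d_p(\K)$, the previous corollary (applied with the roles of $n$ and $p$ swapped, so that the number of rows is $p$ and the codimension bound $d_p(\K)$ is exactly what is required) yields a matrix $Q \in \GL_n(\K)$ such that $\widetilde{F}(N) = NQ$ for all $N \in \calS^T$. Unwinding the definition gives $F(N^T)^T = NQ$, that is $F(M) = Q^T M$ for all $M \in \calS$, so setting $P := Q^T \in \GL_n(\K)$ proves that $F$ has the stated form. Conversely, any $M \mapsto PM$ with $P \in \GL_n(\K)$ is obviously a linear kernel preserver.

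There is no real obstacle here: the proof is essentially a bookkeeping exercise in the transpose duality between ranges and kernels, and the only substantive input is the previous corollary on semi-linear range preservers. The one thing worth being careful about is matching the codimension condition $d_p(\K)$ with the correct row dimension ($p$) of the transposed matrix space, and confirming that semi-linearity is preserved (and not conjugated by the automorphism) under the transposition operation.
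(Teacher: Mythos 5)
Your proof is correct and is exactly the argument the paper intends: the transposition duality $F \mapsto F^T$ between kernel preservers and range preservers is set up verbatim in Section \ref{rangerestrictingsection}, and the final corollary is presented there as a straightforward application of the semi-linear range-preserver result via that duality. Your bookkeeping (codimension of $\calS^T$ in $\Mat_{p,n}(\K)$ being at most $d_p(\K)$, preservation of semi-linearity under transposition, and $P:=Q^T$) matches what the paper leaves implicit.
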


\end{document}